\documentclass[12pt]{article}

\usepackage{amsmath, amssymb, amsthm}
\usepackage{geometry}
\usepackage{hyperref}
\usepackage{booktabs}
\usepackage{array}
\usepackage{graphicx}
\usepackage{tikz}
\usepackage{pgfplots}
\pgfplotsset{compat=1.18}
\usepackage[ruled,linesnumbered,vlined,algo2e]{algorithm2e}
\usepackage{float}

\SetKwInput{KwIn}{Input}
\SetKwInput{KwOut}{Output}
\SetKwComment{Comment}{$\triangleright$\ }{}
\SetKw{KwRet}{return}
\usetikzlibrary{arrows.meta, positioning, shapes.geometric, fit,
                backgrounds, decorations.pathreplacing}
\newtheorem{theorem}{Theorem}[section]
\newtheorem{proposition}[theorem]{Proposition}
\newtheorem{corollary}[theorem]{Corollary}
\newtheorem{lemma}[theorem]{Lemma}
\newtheorem{definition}[theorem]{Definition}
\newtheorem{remark}[theorem]{Remark}
\newtheorem{example}[theorem]{Example}
\newtheorem{assumption}{Assumption}

\newcommand{\Rmax}{\mathbb{R}_{\max}}

\newcommand{\calR}{\mathcal{R}}
\newcommand{\calP}{\mathcal{P}}

\title{\textbf{Tropical Bi-Objective Pseudolinear Optimization as\\ 
Parametric Mean-Payoff Games}}

\author{
  Ibrahim Hassan\thanks{Mathematics Department, Kano University of Science 
  and Technology, Wudil, Nigeria. Email: [ihassan@kustwudil.edu.ng]} 
  \and
  Abdulhadi Aminu\thanks{Mathematics Department, Kano University of Science 
  and Technology, Wudil, Nigeria. Email: [abdulamin247@gmail.com]}
  \and 
  Lawal Muhammad\thanks{Mathematics Department, Northwest University, 
  Kano, Nigeria. Email: [l.muhammad@nwu.edu.ng]}
}

\date{\today}

\begin{document}

\maketitle

\begin{abstract}
We extend the parametric mean-payoff game framework of Parsons \emph{et~al.}~\cite{PSW2023} to bi-objective tropical pseudolinear
optimization with general two-sided constraints. The problem is to
simultaneously minimize two tropical pseudolinear objectives over the
feasible set of a general two-sided system $U \otimes x \oplus b \leq V
\otimes x \oplus d$, we characterize the Pareto front via a parametric
mean-payoff game in two parameters $(\lambda_1, \lambda_2)$.
The feasibility region $\mathcal{R}$ is convex and the Pareto front
$\mathcal{P}$ is a convex piecewise-linear curve with finitely many
breakpoints, these properties are natural extensions of the single-parameter
of~\cite{PSW2023} to two parameters.
In addition, we give as a new result, the joint denominator bound 
(Lemma~\ref{lem:k1k2_bound}): the cycle coefficients satisfy
$k_1(\gamma) + k_2(\gamma) \leq 2$ for any elementary cycle $\gamma$,
yielding $|\Delta| \leq 2$ for every $2\times 2$ Newton system, except in fully decoupled case and
implying that all breakpoints have half-integer coordinates for integer
data. Optimality and infeasibility certificates are given in terms of
the cycle structure of the parametric game. Two algorithms are
developed, a directional bisection algorithm
($O(n^2(n+m)\log M)$ per direction) and a Newton scheme tracing the
complete Pareto front via $2\times 2$ linear solves in at most
$|\mathcal{S}|$ steps, independent of $M$. The  directional bisection algorithm is pseudo-polynomial in $n,~m$ and $M$. The Newton scheme is independent of $M$ but requires up to $|S|$ steps, where $|S|$ is exponential in $n$. Lastly, we give numerical experiments on random instances to confirm
the directional bisection  complexity bound exactly, the Newton scheme's worst-case bound is not attained by random instances but is shown to be tight via explicit adversarial constructions.
\end{abstract}

\medskip
\noindent\textbf{Keywords:} Max-plus semiring; multi-objective combinatorial optimization; 
mean-payoff games; Pareto front; parametric optimization; piecewise-linear 
convexity.

\medskip
\noindent\textbf{AMS Classifications:} 15A80; 90C26; 90C29; 91A46.

\section{Introduction}

Tropical linear algebra has been studied since the foundational
work of Cuninghame-Green~\cite{CG1979} and Baccelli\emph{ et~ al}.~\cite{BCOQ1992},
and has grown into a rich area connecting linear algebra, algebraic
geometry, and combinatorial optimization. Single-objective tropical
optimization, including max-linear programming with two-sided
constraints~\cite{BA2009, B2010}, pseudolinear and pseudoquadratic
problems~\cite{K2012,K2014a,K2014b,K2015a,K2015b,K2017a,K2017b,AB2012},
and the connection to mean-payoff games~\cite{AGG2012,GKS2012,PSW2023} has been studied extensively. A detailed account of this literature
and how the present paper relates to it is given in
Section~\ref{sec:related}.

All of the above works consider a single scalar objective function. In many
practical applications, including the scheduling problem which is described later in this section,
multiple competing objectives arise simultaneously, and a single scalar
optimum is insufficient to capture the full trade-off between them. The
Pareto front, the set of objective-value pairs $(\lambda_1, \lambda_2)$ at
which one objective cannot be improved without worsening the other, is the
natural object of study. 

Consider a project consisting of $n$ tasks with start times
$x_1, \ldots, x_n \in \mathbb{R}$. The tasks are subject to
precedence constraints: task $j$ cannot begin until at least
$d_{ij}$ time units after task $i$ begins, giving
$x_j \geq x_i + d_{ij}$ for each relevant pair $(i,j)$.
In the tropical semiring $(\mathbb{R} \cup \{-\infty\}, \oplus,
\otimes)$, where $a \oplus b = \max(a,b)$ and $a \otimes b = a+b$,
this becomes $0 \otimes x_i \oplus (-d_{ij}) \leq 0 \otimes x_j$,
a two-sided tropical linear inequality. Collecting all such
constraints into matrix form gives the two-sided system
$U \otimes x \oplus b \leq V \otimes x \oplus d$,
which is precisely constraint~\eqref{eq:constraint} of the
present paper.

Two objectives arise naturally and conflict with each other.
The first is the maximum flow time
\[
  f_1(x) = \max_{j=1,\ldots,n}(x_j - r_j),
\]
where $r_j$ is the release date of task $j$. This measures
how long tasks are kept waiting relative to when they become
available, and the scheduler wishes to minimize it.
The second is the maximum lateness
\[
  f_2(x) = \max_{j=1,\ldots,n}(x_j + c_j - D),
\]
where $c_j$ is the processing time of task $j$ and $D$ is a
common project deadline. This measures how far the last task
overruns the deadline, and again the scheduler wishes to
minimize it. Starting tasks earlier reduces lateness but
increases flow time, and vice versa. These two objectives
cannot generally be minimized simultaneously.

Both $f_1$ and $f_2$ are special cases of a tropical pseudolinear
function of the general form
\[
  f(x) = x^- \otimes p \oplus q^- \otimes x
        = \max\Bigl(\max_j(p_j - x_j),\ \max_j(x_j - q_j)\Bigr).
\]
The first term $\max_j(p_j - x_j)$ is large when tasks start too
late relative to their target times, penalising delay. The second
term $\max_j(x_j - q_j)$ is large when tasks start too early
relative to their release dates, penalising idleness. Setting
$(p_1)_j$ and $(q_1)_j$ to the appropriate parameters for the
flow-time objective, and $(p_2)_j$ and $(q_2)_j$ for the lateness
objective, gives
\[
  f_1(x) = x^- \otimes p_1 \oplus q_1^- \otimes x,
  \qquad
  f_2(x) = x^- \otimes p_2 \oplus q_2^- \otimes x,
\]
with precise data specified in Definition~\ref{def:biobjective}.
Minimising $f_1$ and $f_2$ simultaneously subject to the
two-sided constraint is precisely Problem~\ref{def:biobjective}
of this paper. The Pareto front of the problem gives the
complete trade-off curve between flow time and lateness, which entails that
for each achievable level of flow time $\lambda_1$, it
identifies the minimum lateness $\lambda_2$ that can be
simultaneously achieved, and vice versa.
The scheduler can then select any point on this curve
according to the priorities of the project, with full
knowledge of what is and is not simultaneously achievable.

This paper initiates the study of tropical
bi-objective pseudolinear optimization with general two-sided constraints.
Our main contributions are as follows.

\begin{enumerate}
  \item 
  We reformulate the bi-objective problem as a parametric mean-payoff
  game depending on \emph{two} parameters $(\lambda_1, \lambda_2)$
  simultaneously, by introducing two disjoint groups of $\lambda$-arcs
  (one for each objective) into the game matrices.
  The two-dimensional parametric structure, verifying that both
  Assumptions~\ref{ass:max}--\ref{ass:min} of the single-objective are preserved and that
  the duality theorem also extends to the two-parameter setting.

  \item 
  We prove that the feasibility region
  $\calR = \{(\lambda_1,\lambda_2) : \exists\, x \text{ feasible with }
  f_1(x) \leq \lambda_1,\, f_2(x) \leq \lambda_2\}$ is convex, and that
  the Pareto front $\calP$ is a convex piecewise-linear curve with finitely
  many breakpoints (Theorem~\ref{thm:pareto_structure}(i)--(ii)).
  These properties follow from the halfspace representation of $\calR$
  and the finiteness of the strategy set $\mathcal{S}$, both of which
  carry over from the one-parameter analysis of~\cite{PSW2023} to two
  parameters.

  \item \
  We prove a denominator bound: for integer data, every breakpoint of
  $\calP$ has coordinates that are integer multiples of $\tfrac{1}{2}$
  (Theorem~\ref{thm:pareto_structure}(iii)).
  The key technical ingredient is Lemma~\ref{lem:k1k2_bound}, which
  establishes the \emph{joint} bound $k_1(\gamma) + k_2(\gamma) \leq 2$
  for any elementary cycle $\gamma$ in the two-parameter game.
  This bound and the resulting $|\Delta| \leq 2$ for the $2\times 2$
  Newton system requires a combinatorial argument about the shared
  hub-node structure of the two $\lambda$-groups that has no
  single-parameter analogue in~\cite{PSW2023}.

  \item 
  We develop a directional bisection algorithm that reduces the
  bi-objective problem to a finite sequence of scalar parametric
  problems via weighted scalarization, each solved by the
  method of~\cite{PSW2023}, with total cost
  $O(n^2(n+m)\log M)$ per direction
  (Proposition~\ref{prop:termination}).

  \item 
  We establish optimality and infeasibility certificates in terms of
  the cycle structure of the parametric game
  (Propositions~\ref{prop:optimality} and~\ref{prop:infeasibility}),
  extending the single-objective certificates of~\cite{PSW2023} to
  the two-parameter setting.

  \item 
  We introduce a Newton scheme that traces the complete Pareto front
  by jumping directly between consecutive breakpoints.
  At each step, the next breakpoint is located by the
  \emph{first-crossing criterion} (Definition~\ref{def:next_strategy}),
  which identifies the geometrically correct next halfspace boundary
  as $c$ decreases; this requires solving a $2 \times 2$ linear system at each step.
  The non-trivial two-dimensional geometry: the $2\times 2$ solve, the
  first-crossing criterion, and the proof that strategies are traversed
  without repetition (Proposition~\ref{prop:newton_termination}) is actually new and has no direct analogue in the one-parameter
  case of~\cite{PSW2023}, where the corresponding system is $1\times 1$.
\end{enumerate}

Contributions~1, 2, 4, and~5 are natural and competent extensions of
~\cite{PSW2023} to two parameters.
Contribution~3 (the joint denominator bound via Lemma~\ref{lem:k1k2_bound})
and Contribution~6 (the $2\times 2$ Newton scheme with first-crossing
criterion) constitute the new technical result.

The paper is organized as follows. Section~\ref{sec:related} surveys
related work on tropical optimization, mean-payoff games, and
multi-objective optimization. Section~\ref{sec:prelim} recalls the
necessary background on tropical linear algebra and mean-payoff games,
including  the two assumptions ensuring well-posedness, and the duality
theorem. Section~\ref{sec:problem} states the bi-objective problem
formally, introduces the parametric two-sided system, and analyses the
specific structure of the resulting mean-payoff game: it verifies
Assumptions~\ref{ass:max} and~\ref{ass:min} for our matrices,
establishes the cycle mean formula (Lemma~\ref{lem:cyclemean}), and
derives the bi-objective analogues of the game-theoretic results of~\cite{GKS2012}
(Proposition~\ref{prop:biobjective_game}). Section~\ref{sec:structure}
establishes the structural properties of the feasibility region and the
Pareto front. Section~\ref{sec:certificates} provides optimality and
infeasibility certificates. Section~\ref{sec:algorithm} develops the
directional bisection algorithm. Section~\ref{sec:example}
provides two detailed worked examples. Section~\ref{sec:newton} develops the
Newton scheme for tracing the full Pareto front. Section~\ref{sec:computation}
presents numerical experiments on random instances.

\section{Related Work}
\label{sec:related}

We survey related work on tropical optimization and mean-payoff games
along three threads: single-objective tropical optimization, the
game-theoretic connection, and multi-objective optimization.

\subsection*{Single-Objective Tropical Optimization}

Tropical optimization, minimizing a tropical function subject to tropical
constraints has been studied systematically since the foundational work
of Zimmermann~\cite{Z1976, Z1981} and Cuninghame-Green~\cite{CG1979},
whose notions of minimax algebra and max-plus semiring provide the
algebraic foundation for this paper.
Butkovi\v{c}~\cite{B2010} gave a comprehensive treatment of
max-linear systems, including existence, uniqueness, and structure of
solutions. Butkovi\v{c} and Aminu~\cite{BA2009} introduced
max-linear programming with two-sided constraints
$A \otimes x \oplus c = B \otimes x \oplus d$, motivated by
multiprocessor scheduling, and established solution methods for integer
data; that work is the direct precursor of the constraint structure used
in this paper.

Krivulin~\cite{K2012, K2014a, K2014b, K2015a, K2015b, K2017a, K2017b}
solved a range of tropical pseudolinear and pseudoquadratic optimization
problems with special constraint structures (including location analysis
and project scheduling) using algebraic methods based on the Kleene star
and spectral theory. Those results cover scalar objective functions with
specific constraint types, and their solution maps (alcoved polyhedra)
inform our discussion of the optimal solution map in
Corollary~\ref{cor:solution_map}. Aminu and
Butkovi\v{c}~\cite{AB2012} extended max-linear programming to
non-linear objective functions via a heuristic approach.

The most direct predecessor of the present work is Parsons \emph{et~al.}~\cite{PSW2023}, who showed that tropical pseudolinear and
pseudoquadratic optimization with general two-sided constraints is
equivalent to a parametric mean-payoff game in a single parameter
$\lambda$. They developed a bisection algorithm (terminating in
$O(\log(M))$ iterations for data bounded by $M$) and a Newton scheme
(terminating in at most $|\mathcal{S}|$ steps) for the single-objective
problem, and proved that the optimal value is always a half-integer for
integer data. The present paper extends the game-theoretic reformulation,
the convexity and piecewise-linearity structure, the bisection algorithm,
and the optimality certificates of~\cite{PSW2023} to the bi-objective
setting with two parameters $(\lambda_1, \lambda_2)$ simultaneously.
The joint denominator bound (Lemma~\ref{lem:k1k2_bound}) and the
two-dimensional Newton scheme (Definition~\ref{def:next_strategy},
Proposition~\ref{prop:newton_termination}) are the principal new
technical contributions beyond this extension.

\subsection*{Mean-Payoff Games and the Tropical Connection}

Mean-payoff games were introduced by Ehrenfeucht and
Mycielski~\cite{EM1979}, who proved that positional strategies suffice
and that the game value exists and equals the cycle mean. The connection
between tropical linear algebra and mean-payoff games was first
established by Akian \emph{et~al.}~\cite{AGG2012}, who showed
that tropical polyhedra are equivalent to winning regions in mean-payoff
games. Gaubert and Gunawardena~\cite{GG1998} proved the duality theorem
for min-max functions, which underlies the saddle-point characterization
of the feasibility region used in Proposition~\ref{prop:saddle}. The
parametric MPG framework connecting tropical optimization to game theory
was developed by Gaubert \emph{et~al.}~\cite{GKS2012} for tropical
linear-fractional programming; their Theorems 1, 3 and 4 are the direct
analogues of Proposition~\ref{prop:biobjective_game} in the present paper,
extended from one to two parameters.

\subsection*{Multi-Objective Optimization}

Multi-objective optimization and Pareto front computation are classical
topics in operations research. In the tropical geometry literature,
Develin and Sturmfels~\cite{DS2004} and Joswig and Loho~\cite{JL2016}
have studied tropical convex hulls and tropical hyperplane arrangements,
which are related to multi-dimensional tropical geometry. However, those
works address the combinatorial and geometric structure of tropical convex
sets, not the problem of \emph{optimizing} tropical pseudolinear
objectives subject to general two-sided constraints. To the best of our knowledge, no prior work has
addressed multi-objective tropical optimization in the sense studied here. The structural results of the
present paper: convexity of the feasibility region
(Theorem~\ref{thm:pareto_structure}(i)), piecewise linearity of the
Pareto front (Theorem~\ref{thm:pareto_structure}(ii)), and the
denominator bound (Theorem~\ref{thm:pareto_structure}(iii)) are new
in the tropical semiring. The parametric mean-payoff game framework we
develop is a natural bi-objective analogue of the single-parameter game
of~\cite{PSW2023}, replacing a one-dimensional bisection over $\lambda$
with a two-dimensional characterization of the set
$\mathcal{R} \subseteq \mathbb{R}^2$.

\section{Preliminaries}
\label{sec:prelim}

\subsection{Tropical Linear Algebra}

The \emph{tropical semiring} is $\Rmax = \mathbb{R} \cup \{-\infty\}$
equipped with operations $a \oplus b = \max(a,b)$ and $a \otimes b = a + b$
for all $a, b \in \Rmax$. The additive identity is $-\infty$ and the
multiplicative identity is $0$. These operations extend to matrices and
vectors in the natural way, for matrices $A = (a_{ij})$ and $B = (b_{kl})$
of compatible sizes,
\[
  (A \oplus B)_{ij} = a_{ij} \oplus b_{ij}, \qquad
  (A \otimes B)_{ij} = \bigoplus_k a_{ik} \otimes b_{kj}
                     = \max_k (a_{ik} + b_{kj}).
\]
The \emph{unit matrix} $I$ is the square matrix with diagonal entries $0$
and off-diagonal entries $-\infty$.

For $a \in \mathbb{R} \cup \{-\infty\} \cup \{+\infty\}$, its
\emph{conjugate} is defined as $a^- = -a$ if $a \in \mathbb{R}$,
$a^- = +\infty$ if $a = -\infty$, and $a^- = -\infty$ if $a = +\infty$.
For a column vector $x = (x_i)$, its conjugate is the row vector
$x^- = (x_i^-)$. The conjugate of a matrix $A$ with entries in
$\mathbb{R} \cup \{-\infty\}$ is the matrix $\overline{A}$ with entries
$(\overline{A})_{ij} = a_{ji}^-$.

With a matrix $A \in (\mathbb{R} \cup \{-\infty\})^{n \times n}$ we
associate a weighted digraph $D_A = (N, E, w)$ with node set
$N = \{1, \ldots, n\}$, arc set $E = \{(i,j) : a_{ij} \neq -\infty\}$,
and weight function $w(i,j) = a_{ij}$. The \emph{maximum cycle mean} of
$A$ is
\[
  \rho(A) = \max_\gamma \frac{w(A,\gamma)}{l(\gamma)},
\]
where the maximum is over all elementary cycles $\gamma$ in $D_A$,
$w(A,\gamma)$ denotes the total weight of $\gamma$, and $l(\gamma)$
its length.

\begin{proposition}[{\cite[Theorem~1.6.25]{B2010}}]
\label{prop:residuation}
Let $A \in \Rmax^{m \times n}$, $x \in \Rmax^n$, and
$b \in (\mathbb{R} \cup \{+\infty\})^m$. Then
\[
  A \otimes x \leq b \quad \text{if and only if} \quad
  x \leq \overline{A} \otimes b.
\]
\end{proposition}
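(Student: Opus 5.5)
The plan is to prove the equivalence entrywise, reducing the matrix statement to a family of scalar inequalities. The convention needed is the dual (min-plus) product used in $\overline{A}\otimes b$: throughout residuation, $\overline{A}\otimes b$ is read as $(\overline{A}\otimes' b)_j=\min_i (a_{ij}^- + b_i)$, with the conventions $(-\infty)+(+\infty)=+\infty$ in this dual product (so that $a_{ij}=-\infty$ imposes no constraint) and $-\infty$ arithmetic in $\otimes$. I will state this explicitly at the start, since the paper's definition of $\overline{A}$ makes it the intended reading.

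\textbf{Step 1.} Write $A\otimes x\le b$ as the conjunction over $i$ of $\max_j(a_{ij}+x_j)\le b_i$. This is equivalent to the conjunction over all pairs $(i,j)$ of $a_{ij}+x_j\le b_i$.

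\textbf{Step 2.} Show the scalar equivalence $a_{ij}+x_j\le b_i \iff x_j\le a_{ij}^-+b_i$ for each pair, by cases.
\begin{itemize}
\item If $a_{ij}$, $x_j$ and $b_i$ are all finite, this is just subtraction.
\item If $a_{ij}=-\infty$, the left side holds trivially. The right side is $x_j\le +\infty$, which also holds trivially.
\item If $x_j=-\infty$ (with $a_{ij}$ finite), both sides hold.
\item If $b_i=+\infty$, both sides hold.
\end{itemize}

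\textbf{Step 3.} Take the conjunction over $i$ for fixed $j$. This gives $x_j\le\min_i(a_{ij}^-+b_i)=(\overline{A}\otimes b)_j$, since $(\overline{A})_{ji}=a_{ij}^-$. Conjoining over $j$ then yields $x\le \overline{A}\otimes b$, and every step is reversible.

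The only real obstacle is the bookkeeping of infinite values. In particular, $b$ may contain $+\infty$ while $A$ may contain $-\infty$, so the product $a^-_{ij}+b_i$ can be $(+\infty)+(+\infty)$ or involve a $-\infty$ entry. I will handle this by fixing the convention that $-\infty$ is absorbing in $\otimes$ and $+\infty$ is absorbing in the dual product $\otimes'$, and then verifying the four boundary cases of Step~2 directly. Alternatively, one may simply cite Butkovič's Theorem~1.6.25, as the statement does.
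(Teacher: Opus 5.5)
Your proof is correct. The paper gives no argument of its own for this statement. It imports the result from Butkovi\v{c}'s Theorem~1.6.25, and your entrywise reduction to the scalar residuation $a_{ij}+x_j\le b_i \iff x_j\le a_{ij}^-+b_i$ is the standard way this property is proved. The only real difference is that your version is self-contained.

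Two remarks on your bookkeeping of infinities:
\begin{enumerate}
\item Reading $\overline{A}\otimes b$ as the min-plus product $\min_i(a_{ij}^-+b_i)$ is necessary, not just convenient. If you take the paper's max-plus $\otimes$ literally, the statement is false once $m\ge 2$. For example, take $A=(0,0)^\top$, $b=(0,1)^\top$ and $x=1$. Then $A\otimes x=(1,1)^\top\not\le b$, yet $\max(0+0,\,0+1)=1\ge x$. The paper itself silently uses the $\min$ reading in the componentwise expansion right after the statement.
\item Under the stated hypotheses, $a_{ij}^-\in\mathbb{R}\cup\{+\infty\}$ and $b_i\in\mathbb{R}\cup\{+\infty\}$. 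So $a_{ij}^-+b_i$ never involves $-\infty$, and the convention $(-\infty)+(+\infty)=+\infty$ is never actually used. Your worry that the sum could ``involve a $-\infty$ entry'' does not arise here; it only matters in Butkovi\v{c}'s more general setting, where $b$ may have $-\infty$ entries.
\end{enumerate}
Neither point affects the validity of your argument, and your case split in Step~2 is exhaustive.
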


\subsection{Two-Sided Systems and Min-Max Functions}

A \emph{two-sided tropical system} is an inequality of the form
$A \otimes x \leq B \otimes x$, where $A, B \in \Rmax^{m \times n}$.
By Proposition~\ref{prop:residuation}, this is equivalent to
$x \leq \overline{A} \otimes (B \otimes x)$, which can be written
componentwise as
\[
  x_j \leq \min_{k:\, a_{kj} \in \mathbb{R}}
  \left( -a_{kj} + \max_{l:\, b_{kl} \in \mathbb{R}} (b_{kl} + x_l) \right),
  \quad j = 1, \ldots, n.
\]
The right-hand side defines a \emph{min-max function}
\begin{equation}
\label{eq:minmax}
  f_j(x) = \min_{k \in [m]} \left( -a_{kj} + \max_{l \in [n]}
  (b_{kl} + x_l) \right), \quad j = 1, \ldots, n,
\end{equation}
so that the two-sided system $A \otimes x \leq B \otimes x$ is equivalent
to $x \leq f(x)$. A min-max function is isotone ($x \leq y \Rightarrow
f(x) \leq f(y)$) and additively homogeneous ($f(\lambda + x) = \lambda +
f(x)$ for all $\lambda \in \mathbb{R}$), hence nonexpansive in the
sup-norm ($\|f(x)-f(y)\|_\infty \leq \|x-y\|_\infty$ for all
$x,y \in \mathbb{R}^n$). It is also piecewise affine (affine on each
region of a finite polyhedral partition of $\mathbb{R}^n$).

We are interested in the long-run average behaviour of the iterates
$f^k(x)$ as $k \to \infty$. The following theorem of Kohlberg guarantees
the existence of an \emph{invariant half-line} of $f$.

\begin{theorem}[{\cite[Theorem~2]{GKS2012}}, Kohlberg~\cite{K1980}]
\label{thm:kohlberg}
Let $f : \mathbb{R}^n \to \mathbb{R}^n$ be a nonexpansive piecewise-affine
function. Then there exist $v \in \mathbb{R}^n$ and $\chi \in \mathbb{R}^n$
such that
\[
  f(v + t\chi) = v + (t+1)\chi, \quad \forall\, t \geq T,
\]
for some large enough $T \in \mathbb{R}$. The function $t \mapsto v + t\chi$
is an invariant half-line of $f$.
\end{theorem}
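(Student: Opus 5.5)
The plan is to prove Kohlberg's theorem by exploiting the finite polyhedral structure of $f$. Since $f$ is piecewise affine, I fix a finite polyhedral partition $\mathbb{R}^n = \bigcup_{r=1}^N P_r$ with $f(x) = M_r x + c_r$ on $P_r$. The goal is to produce $v,\chi$ and $T$ such that the whole half-line $\{v+t\chi : t\ge T\}$ and its image stay inside a single piece $P_r$. On such a piece the invariance relation $f(v+t\chi)=v+(t+1)\chi$ becomes the two affine identities
\[
M_r\chi=\chi, \qquad M_r v + c_r = v+\chi .
\]
So the proof splits into two tasks: a \emph{selection} step that finds the right piece and direction, and a \emph{linear-algebra} step that solves these equations within that piece.

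For the selection step I would use the cycle-time (escape-rate) vector. Nonexpansiveness gives $\|f^k(x)-f^k(y)\|_\infty \le \|x-y\|_\infty$. Hence the limit $\chi=\lim_k f^k(x)/k$, if it exists, is independent of $x$. To prove existence I would consider the recession map $\hat f(x)=\lim_{s\to\infty} f(sx)/s$. Because the pieces are polyhedral cones up to translation, $\hat f$ is positively homogeneous and piecewise linear, and it is again nonexpansive. The standard route is then to pass to the \emph{germ} of $f$ at infinity along $\chi$. For $t$ large, $f(y+t\chi)-t\chi$ depends on $y$ only through finitely many affine pieces whose recession cones contain $\chi$. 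This yields a new nonexpansive piecewise-affine map $g(y)=f(y+t\chi)-t\chi$, valid on a region that is stable under translation by $\chi$ for large $t$.

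For the linear-algebra step I would show that $g$ has a fixed point up to the additive constant. Equivalently, I must find $v$ with $g(v)=v+\chi$. The tool is a compactness argument: the orbit $f^k(x)-k\chi$ is bounded. Since $f$ has finitely many pieces, some piece is visited infinitely often along a subsequence, and there the displacement $f(x_k)-x_k$ equals $M_r x_k + c_r - x_k$. Averaging the iterates (a Cesàro-type argument) combined with the polyhedral structure gives a point $v$ in the closure of a piece with $M_r v + c_r = v + \chi$ and $M_r\chi=\chi$. Once these hold, the half-line $v+t\chi$ remains in that closed piece for $t\ge T$, because $\chi$ lies in the piece's recession cone. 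The identities then propagate.

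The main obstacle is the boundedness of $f^k(x)-k\chi$, which is the heart of Kohlberg's theorem and is false for general nonexpansive maps. To get it I would proceed by induction on $n$, or on the number of pieces. The argument is that in the nonexpansive piecewise-affine case any linear growth of $f^k(x)-k\chi$ would have to be realized within a single affine piece. Within that piece the linear part $M_r$ is a sup-norm nonexpansive matrix, whose Jordan blocks for eigenvalue $1$ are trivial. This rules out polynomial drift and gives boundedness. Alternatively, since the paper only needs the theorem for min-max functions, I would simply invoke the game-theoretic proof: positional optimal strategies exist (Ehrenfeucht–Mycielski), fixing them makes $f$ affine, and the half-line is then read off from the mean-payoff and bias vectors of the resulting one-player games.
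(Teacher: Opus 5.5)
The paper does not prove this theorem; it imports it from Kohlberg and Gaubert--Katz--Sergeev. Your proposal therefore has to stand on its own, and as written it has a genuine gap. Your reduction is right: on a closed piece $P_r$ where $f(x)=M_rx+c_r$, invariance of $v+t\chi$ amounts to $M_r\chi=\chi$, $M_rv+c_r=v+\chi$ and $v+t\chi\in P_r$ for large $t$. What is missing is any argument that produces such $\chi$, $v$ and $P_r$. The two facts you lean on are the whole content of the theorem, and neither is established.

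\textbf{Existence of $\chi$.} Your selection step presupposes $\chi=\lim_k f^k(x)/k$. Nonexpansiveness only makes this limit independent of $x$ \emph{if} it exists. Existence is not a consequence of nonexpansiveness alone, and the paper in fact deduces it from the very theorem you are proving. The recession map cannot supply it either. Since $\hat f(0)=0$, the map $\hat f$ has zero cycle time, and $\chi$ is at best one of possibly many fixed points of $\hat f$, with nothing selecting it. Your germ map $g$ is a legitimate reduction once $\chi$ is known, but it only relocates the problem to solving $g(v)=v+\chi$.

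\textbf{Boundedness of $f^k(x)-k\chi$.} By definition of $\chi$ this quantity is already $o(k)$, so linear growth is not the danger. The danger is unbounded sublinear drift of an orbit that switches pieces infinitely often. Your Jordan-block observation only controls iterates of a single affine map $x\mapsto M_rx+c_r$. The Ces\`aro step is likewise unsupported: averaging iterates of a non-affine map does not yield a solution of $M_rv+c_r=v+\chi$. You also never show that $\chi$ lies in the recession cone of the piece you land in.

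\textbf{The min-max fallback.} This proves only a special case of the stated theorem. Even there, mean-payoff-optimal positional strategies do not suffice. You need strategies (e.g.\ Blackwell-optimal ones) for which the invariant half-line of the fixed-strategy affine map is also invariant for $f$ itself, and that is again the real content.

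\textbf{The missing idea: discounting.} What you need is a device that picks out a single piece without following orbits. For $\lambda\in(0,1)$ the map $\lambda f$ is a contraction, so it has a unique fixed point $x_\lambda$, with $\|x_\lambda\|\le\lambda\|f(0)\|/(1-\lambda)$. Discard lower-dimensional pieces; the closed full-dimensional ones still cover $\mathbb{R}^n$, and $f$ is continuous. On a full-dimensional $P_r$ the matrix $M_r$ has operator norm at most $1$, so $I-\lambda M_r$ is invertible. Then $x_\lambda\in P_r$ iff $\lambda(I-\lambda M_r)^{-1}c_r\in P_r$, which is a finite system of rational inequalities in $\lambda$. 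Each such set of $\lambda$ is therefore a finite union of intervals, and these sets cover $(0,1)$. Hence one piece contains $x_\lambda$ for all $\lambda\in(1-\delta,1)$. There $x_\lambda$ is rational in $\lambda$, and the norm bound forces at most a simple pole at $1$: $x_\lambda=\chi/(1-\lambda)+v+O(1-\lambda)$.

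Comparing coefficients in $x_\lambda=\lambda(M_rx_\lambda+c_r)$ gives exactly your identities $M_r\chi=\chi$ and $M_rv+c_r=v+\chi$. Substituting the expansion into the inequalities $\alpha_i\cdot x\le\beta_i$ defining $P_r$ gives $\alpha_i\cdot\chi\le 0$, and $\alpha_i\cdot v\le\beta_i$ whenever $\alpha_i\cdot\chi=0$. So $v+t\chi\in P_r$ for all large $t$. This produces $\chi$, $v$ and the piece at once, and existence of the cycle time then follows as a corollary instead of being assumed.
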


Using the nonexpansiveness of $f$, it follows that the limit~\cite{GKS2012}
\begin{equation}
\label{eq:cycletime}
  \chi(f) = \lim_{k \to \infty} \frac{f^k(x)}{k}
\end{equation}
exists, is independent of the starting point $x \in \mathbb{R}^n$, and
equals the growth rate $\chi$ of any invariant half-line. The vector
$\chi(f)$ is called the \emph{cycle-time vector} of $f$. We write
$\chi(\overline{A}B)$ for the cycle-time vector of the min-max function
$x \mapsto \overline{A} \otimes (B \otimes x)$.

\subsection{Mean-Payoff Games}

A \emph{mean-payoff game} (MPG) is played on a weighted bipartite
directed graph $G$ whose node set is the disjoint union $[m] \cup [n]$,
where nodes in $[m]$ belong to player Max and nodes in $[n]$ belong to
player Min. The game is defined by two matrices $A, B \in \Rmax^{m \times n}$.
When the pawn is at node $k \in [m]$ of Max, he chooses an arc to some
node $l \in [n]$ of Min and receives payment $b_{kl}$. When the pawn is
at node $j \in [n]$ of Min, she chooses an arc to some node $i \in [m]$
of Max and pays $-a_{ij}$ to Max. Arcs with weight $-\infty$ are
prohibited. Player Max seeks to maximize, and player Min to minimize,
the long-run average payment per turn.

We make the following assumptions, which ensure that both players always
have at least one move available at every node.

\begin{assumption}
\label{ass:max}
For all $k \in [m]$ there exists $l \in [n]$ such that $b_{kl} \neq -\infty$.
\end{assumption}

\begin{assumption}
\label{ass:min}
For all $j \in [n]$ there exists $i \in [m]$ such that $a_{ij} \neq -\infty$.
\end{assumption}

A \emph{positional strategy} for Max is a mapping $\sigma: [m] \to [n]$
such that $b_{i\sigma(i)} \neq -\infty$ for all $i \in [m]$. A
positional strategy for Min is a mapping $\tau: [n] \to [m]$ such that
$a_{\tau(j)j} \neq -\infty$ for all $j \in [n]$. We denote by
$\mathcal{S}$ and $\mathcal{T}$ the sets of positional strategies of Max
and Min respectively.

When both players reveal their positional strategies $\sigma$ and $\tau$,
the play proceeds in the subgraph $G^{\sigma,\tau}$ where each node has
exactly one outgoing arc. This is a sunflower digraph, each node has a
unique path leading to a unique cycle. The mean weight per turn of that
cycle is the payoff. The weight of a Min-to-Max arc from $j$ to $i$ is
$-a_{ij}$, and the weight of a Max-to-Min arc from $i$ to $j$ is $b_{ij}$.
The total weight of a cycle is therefore the sum of $-a_{ij}$ over its
Min-to-Max arcs plus the sum of $b_{ij}$ over its Max-to-Min arcs.

The invariant half-line of Theorem~\ref{thm:kohlberg} determines a pair
of optimal positional strategies $(\sigma^*, \tau^*)$ for the two players:
these are the strategies under which $f$ acts as $f^{\sigma^*}$ and
$f^{\tau^*}$ simultaneously along the half-line $t \mapsto v + t\chi$
for $t$ large enough, as described in~\cite{GKS2012}.

For a fixed positional strategy $\sigma$ of Max, the min-max function
$f$ reduces to the min-plus linear function
\[
  f^\sigma_j(x) = \min_{i \in [m]} (-a_{ij} + b_{i\sigma(i)} + x_{\sigma(i)}),
\]
and for a fixed strategy $\tau$ of Min it reduces to the max-plus linear
function
\[
  f^\tau_j(x) = -a_{\tau(j)j} + \max_{l \in [n]} (b_{\tau(j)l} + x_l).
\]
The cycle-time vectors of these one-player functions have explicit
combinatorial descriptions in terms of cycle means~\cite{GKS2012}.
For a max-plus linear function $x \mapsto Ex$ with associated digraph
$D(E)$, define the \emph{maximal cycle mean} of a strongly connected
component $S$ as
\[
  \mu^{\max}_S(E) = \max_{\text{cycles }\gamma \subseteq S}
  \frac{\sum_{(i,j)\in\gamma} e_{ij}}{|\gamma|},
\]
where $|\gamma|$ denotes the number of arcs in $\gamma$. Then
\begin{equation}
\label{eq:cycletime_maxplus}
  \chi^{\max}_i(E) = \max\{ \mu^{\max}_S(E) : i \text{ accesses } S\},
\end{equation}
meaning that $\chi^{\max}_i(E)$ is the largest cycle mean accessible
from node $i$~\cite{GKS2012}. Analogously, for min-plus linear functions,
the cycle-time vector is the \emph{minimal} cycle mean accessible from
each node~\cite{GKS2012}. These formulas are the key tool for computing
cycle-time vectors and for establishing our certificates in
Section~\ref{sec:certificates}.

\begin{theorem}[{\cite[Theorem~1]{GKS2012}}]
\label{thm:mpg_value}
Let $A, B \in \Rmax^{m \times n}$ satisfy Assumptions~\ref{ass:max}
and~\ref{ass:min}. Then there exists a vector $\chi \in \mathbb{R}^n$
and a pair of positional strategies $\sigma^* \in \mathcal{S}$ and
$\tau^* \in \mathcal{T}$ such that $\sigma^*$ secures a mean profit of
at least $\chi_j$ from any starting node $j$ of Min regardless of Min's
strategy, and $\tau^*$ secures a mean loss of no more than $\chi_j$
regardless of Max's strategy. The vector $\chi$ is uniquely determined by
\[
  \chi_j = \min_\tau \max_\sigma\, \Phi_{A,B}(j,\tau,\sigma)
          = \max_\sigma \min_\tau\, \Phi_{A,B}(j,\tau,\sigma),
\]
where the finite-horizon payoff $\Phi_{A,B}(j,\tau,\sigma)$ is the mean
weight per turn of the unique cycle in $G^{\sigma,\tau}$ accessible from
$j$. Moreover, $\chi_j = \chi_j(\overline{A}B)$, the $j$-th component
of the cycle-time vector of the min-max function~\eqref{eq:minmax}.
\end{theorem}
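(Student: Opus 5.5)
The plan is to derive everything from the min-max function $f(x)=\overline{A}\otimes(B\otimes x)$ of~\eqref{eq:minmax}, using Kohlberg's Theorem~\ref{thm:kohlberg} and the cycle-time formula~\eqref{eq:cycletime_maxplus} for one-player functions.

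\emph{Step 1: $f$ is finite and has an invariant half-line.} Assumptions~\ref{ass:max} and~\ref{ass:min} guarantee that every inner maximum and every outer minimum in~\eqref{eq:minmax} ranges over a nonempty set of finite terms. Hence $f$ maps $\mathbb{R}^n$ to $\mathbb{R}^n$. As noted after~\eqref{eq:minmax}, $f$ is isotone, additively homogeneous, nonexpansive and piecewise affine. Theorem~\ref{thm:kohlberg} therefore gives an invariant half-line $f(v+t\chi)=v+(t+1)\chi$ for $t\ge T$, and $\chi=\chi(f)$ by~\eqref{eq:cycletime}.

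\emph{Step 2: read off strategies from the half-line.} Along $x(t)=v+t\chi$, each term $b_{kl}+x_l(t)$ and $-a_{kj}+\max_l(\cdots)$ is affine in $t$. Finitely many affine functions have an eventually constant ordering. Enlarging $T$, I can therefore choose, constantly for $t\ge T$:
\begin{itemize}
\item a maximizer $\sigma^*(k)\in\arg\max_l(b_{kl}+x_l(t))$ for each Max node $k$;
\item a minimizer $\tau^*(j)$ in the outer minimum for each Min node $j$.
\end{itemize}
Then $f^{\sigma^*}(x(t))=f(x(t))=f^{\tau^*}(x(t))$. So the same half-line is invariant for the min-plus map $f^{\sigma^*}$ and the max-plus map $f^{\tau^*}$, which gives
\[
\chi(f^{\sigma^*})=\chi(f^{\tau^*})=\chi .
\]
Making this stabilisation rigorous, including the choice of ties, is the step I expect to require the most care.

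\emph{Step 3: identify one-player cycle times with strategy payoffs.} For fixed $\tau$ and $\sigma$, the map $f^{\sigma,\tau}$ has exactly one active term per coordinate. Its digraph is the sunflower $G^{\sigma,\tau}$, contracted onto Min nodes. Its cycle time at $j$ is the mean of the unique cycle reached from $j$, where one step is a Min$\to$Max$\to$Min pair; this is exactly $\Phi_{A,B}(j,\tau,\sigma)$.

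Formula~\eqref{eq:cycletime_maxplus} says $\chi_j(f^\tau)$ is the largest cycle mean accessible from $j$ in $D(f^\tau)$. A positional choice of Max, namely following a path to that cycle and then around it, attains this value, and no positional $\sigma$ exceeds it. Hence
\[
\chi_j(f^\tau)=\max_\sigma\Phi(j,\tau,\sigma).
\]
Dually, $\chi_j(f^\sigma)=\min_\tau\Phi(j,\tau,\sigma)$.

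\emph{Step 4: minimax equality.} Combining Steps 2 and 3,
\[
\chi_j=\max_\sigma\Phi(j,\tau^*,\sigma)\ \ge\ \min_\tau\max_\sigma\Phi,
\qquad
\chi_j=\min_\tau\Phi(j,\tau,\sigma^*)\ \le\ \max_\sigma\min_\tau\Phi .
\]
Weak duality $\max\min\le\min\max$ then forces all quantities to equal $\chi_j=\chi_j(\overline{A}B)$.

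\emph{Step 5: security against arbitrary strategies.} Once $\sigma^*$ is fixed, Min faces a one-player min-plus system. Iterating $f^{\sigma^*}$ from any $x$ gives growth $\chi(f^{\sigma^*})=\chi$. Together with isotonicity, this shows that any Min strategy, positional or not, yields a finite-horizon average over $k$ turns of at least $\chi_j-O(1/k)$. Hence the mean profit is at least $\chi_j$. The symmetric argument with $f^{\tau^*}$ gives the guarantee for $\tau^*$.
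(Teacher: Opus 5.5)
The paper gives no proof of its own (the result is quoted from \cite{GKS2012}), but your argument is correct. It is precisely the mechanism the paper sketches in the paragraph following the definition of $G^{\sigma,\tau}$: the optimal positional strategies are read off as those under which $f$ coincides with $f^{\sigma^*}$ and $f^{\tau^*}$ along Kohlberg's half-line, and your minimax sandwich and nonexpansiveness estimate complete it.

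One small caveat concerns Step~3. Your justification that some positional $\sigma$ attains $\chi_j(f^\tau)$ overlooks that a cycle of $D(f^\tau)$ may send two Min nodes with the same $\tau$-value to different successors. No positional $\sigma$ can realize such a cycle without an extra exchange argument. This does not damage the proof, because Step~4 only needs the easy inequalities $\max_\sigma\Phi_{A,B}(j,\tau^*,\sigma)\le\chi_j$ and $\min_\tau\Phi_{A,B}(j,\tau,\sigma^*)\ge\chi_j$. These follow directly from $f^{\sigma,\tau^*}\le f^{\tau^*}$, $f^{\sigma^*,\tau}\ge f^{\sigma^*}$ and isotonicity.
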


The following duality result, derived as a corollary of
Theorem~\ref{thm:kohlberg}, relates the cycle-time vector of the
min-max function to those of the one-player subgames.

\begin{theorem}[{\cite[Theorem~4]{GKS2012}}]
\label{thm:duality}
Let $A, B \in \Rmax^{m \times n}$ satisfy Assumptions~\ref{ass:max}
and~\ref{ass:min}, and let $\mathcal{S}$ and $\mathcal{T}$ be as above.
Then
\begin{equation}
\label{eq:duality}
  \max_{\sigma \in \mathcal{S}} \chi(f^\sigma)
  = \chi(f)
  = \min_{\tau \in \mathcal{T}} \chi(f^\tau),
\end{equation}
where $f$ is the min-max function~\eqref{eq:minmax}, $f^\sigma$ is the
min-plus linear function with Max strategy $\sigma$ fixed, and $f^\tau$
is the max-plus linear function with Min strategy $\tau$ fixed.
\end{theorem}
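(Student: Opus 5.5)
The plan is to sandwich $f$ between its one-player reductions, obtain the two inequalities from isotonicity, and then get equality by freezing the players' choices along the invariant half-line of Theorem~\ref{thm:kohlberg}.

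\textbf{Step 1: pointwise sandwich.} For every $\sigma\in\mathcal{S}$, $\tau\in\mathcal{T}$ and $x\in\mathbb{R}^n$ I will show
\[
  f^\sigma(x)\le f(x)\le f^\tau(x).
\]
For the left inequality, fixing Max's choice $l=\sigma(i)$ gives $b_{i\sigma(i)}+x_{\sigma(i)}\le \max_l(b_{il}+x_l)$ for each $i$; then take $\min_i$. For the right inequality, fixing Min's choice $i=\tau(j)$ in the outer minimum can only increase it. Assumptions~\ref{ass:max} and~\ref{ass:min} ensure that all these quantities are finite, so $\mathcal{S}$ and $\mathcal{T}$ are nonempty and the maps send $\mathbb{R}^n$ into $\mathbb{R}^n$.

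\textbf{Step 2: the inequalities.} All three maps are isotone, so induction gives $(f^\sigma)^k(x)\le f^k(x)\le (f^\tau)^k(x)$. Dividing by $k$ and using~\eqref{eq:cycletime} yields
\[
  \chi(f^\sigma)\le\chi(f)\le\chi(f^\tau).
\]
The limits exist and do not depend on $x$ by nonexpansiveness. This holds for every $\sigma$ and $\tau$, which gives $\max_\sigma\chi(f^\sigma)\le\chi(f)\le\min_\tau\chi(f^\tau)$.

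\textbf{Step 3: attainment (main step).} Take the invariant half-line $v+t\chi$ from Theorem~\ref{thm:kohlberg}, with $\chi=\chi(f)$. Along it, each quantity $b_{il}+v_l+t\chi_l$ is an affine function of $t$. Finitely many affine functions have an eventually constant ordering, so there is $T'$ beyond which, for each $i\in[m]$, a fixed index $\sigma^*(i)$ attains $\max_l(b_{il}+x_l)$. Choosing that maximizer among the finite-weight arcs makes $\sigma^*$ admissible.

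With this choice, $f^{\sigma^*}(v+t\chi)=f(v+t\chi)=v+(t+1)\chi$ for $t\ge\max(T,T')$. Thus the half-line is invariant for $f^{\sigma^*}$, and so $\chi(f^{\sigma^*})=\chi$. Symmetrically, the functions $-a_{ij}+\max_l(b_{il}+v_l+t\chi_l)$ are eventually piecewise affine with a stable minimizer $\tau^*(j)$. This gives $f^{\tau^*}=f$ on the tail of the half-line and hence $\chi(f^{\tau^*})=\chi$.

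Combining Step 3 with Step 2 gives~\eqref{eq:duality}, with the max and min attained at $\sigma^*$ and $\tau^*$. The only delicate point is the eventual stabilization of the argmax and argmin selections. This works because there are finitely many affine functions of $t$ and the outer minimum is taken over already-stabilized inner maxima, which are themselves eventually affine.
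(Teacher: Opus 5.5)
Your proof is correct and follows the route the paper indicates. The paper does not prove this statement itself: it cites \cite[Theorem~4]{GKS2012} as a corollary of Kohlberg's theorem, with $\sigma^*,\tau^*$ being exactly the strategies under which $f$ coincides with $f^{\sigma^*}$ and $f^{\tau^*}$ on the tail of the invariant half-line (your Step~3), while your Steps~1--2 supply the sandwich $f^\sigma\le f\le f^\tau$ that gives the two inequalities.
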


\begin{proposition}[{\cite[Theorem~5]{GKS2012}}]
\label{prop:solvability}
Let $A, B \in \Rmax^{m \times n}$ satisfy Assumptions~\ref{ass:max}
and~\ref{ass:min}. Then $\chi_j(\overline{A}B) \geq 0$ if and only if
there exists $x \in \Rmax^n$ such that $A \otimes x \leq B \otimes x$
and $x_j \neq -\infty$.
\end{proposition}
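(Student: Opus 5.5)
The plan is to prove the two directions separately. Throughout, $f$ is the min-max function~\eqref{eq:minmax} attached to $(A,B)$, extended to $\Rmax^n$ in the obvious way. The basic translation comes from Proposition~\ref{prop:residuation}: for $x\in\Rmax^n$, $A\otimes x\le B\otimes x$ holds if and only if $x\le \overline{A}\otimes(B\otimes x)=f(x)$. Here $B\otimes x$ may have $-\infty$ entries, and the conjugation conventions handle this.

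\emph{($\Leftarrow$).} Suppose $x$ satisfies $A\otimes x\le B\otimes x$ with $x_j$ finite, so $x\le f(x)$.
1. Since $f$ is isotone on $\Rmax^n$, induction gives $x\le f^k(x)$ for every $k$.
2. Choose a real vector $y\ge x$, for instance $y_l=x_l$ where $x_l$ is finite and $y_l=0$ elsewhere.
3. Isotonicity again gives $f^k(y)_j\ge f^k(x)_j\ge x_j$.
4. Divide by $k$ and let $k\to\infty$. By~\eqref{eq:cycletime}, which is independent of the real starting point, we get $\chi_j(\overline{A}B)\ge \lim x_j/k=0$.

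Assumptions~\ref{ass:max}--\ref{ass:min} are used only to guarantee that $f$ maps $\mathbb{R}^n$ into $\mathbb{R}^n$, so that $\chi$ is well defined.

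\emph{($\Rightarrow$).} Here I would use Kohlberg's invariant half-line (Theorem~\ref{thm:kohlberg}): there are $v$ and $\chi=\chi(\overline{A}B)$ with $f(x(t))=x(t)+\chi$ for $t\ge T$, where $x(t)=v+t\chi$.
1. Let $J=\{l:\chi_l\ge 0\}$. By hypothesis $j\in J$.
2. Fix a large $t$ and define $z$ by $z_l=x_l(t)$ for $l\in J$ and $z_l=-\infty$ otherwise. Then $z_j$ is finite.
3. It remains to show $z\le f(z)$; Proposition~\ref{prop:residuation} then gives $A\otimes z\le B\otimes z$. For $l\notin J$ the inequality is trivial. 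For $l\in J$ it suffices to show $f_l(z)=f_l(x(t))$ for $t$ large, because then $f_l(z)=z_l+\chi_l\ge z_l$.

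\emph{Key step (main obstacle).} The delicate point is justifying that deleting the coordinates outside $J$ does not lower $f_l$ for $l\in J$. I would use that each term $b_{kl'}+x_{l'}(t)$ is affine in $t$ with slope $\chi_{l'}$.
1. Take a row $k$ with $a_{kl}$ finite. Its inner maximum $\max_{l'}(b_{kl'}+x_{l'}(t))$ is at least $f_l(x(t))+a_{kl}=x_l(t)+\chi_l+a_{kl}$, and this lower bound grows with slope $\chi_l\ge0$.
2. A maximum of finitely many affine functions of $t$ is eventually equal to one of them. Because of the lower bound in step 1, for $t$ large the maximum is attained by a term of slope $\ge\chi_l\ge 0$, that is, by some $l'\in J$.
3. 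Hence replacing $x_{l'}(t)$ by $-\infty$ for $l'\notin J$ leaves every inner maximum in a row $k$ with $a_{kl}$ finite unchanged. Therefore $f_l(z)=f_l(x(t))$.

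There are finitely many rows and coordinates, so a single threshold on $t$ works uniformly, which completes the argument.
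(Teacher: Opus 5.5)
Your proof is correct and takes the same route the paper indicates: the paper, citing \cite{GKS2012}, only sketches the forward direction by truncating Kohlberg's invariant half-line $v+t\chi$ to the coordinates with $\chi_i\ge 0$, which is exactly your construction of $z$. The details the paper leaves to the reference are filled in correctly by two of your arguments: the slope argument that, for $t$ large, the inner maxima in rows with $a_{kl}$ finite are attained on $J$, and the isotonicity argument $x\le f^k(x)\le f^k(y)$ for the converse.
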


The proof of Proposition~\ref{prop:solvability} uses the invariant
half-line of Theorem~\ref{thm:kohlberg}: the vector $x$ is constructed
by taking $x_i = v_i + t\chi_i$ for $t$ large enough when $\chi_i \geq 0$,
and $x_i = -\infty$ otherwise, where $t \mapsto v + t\chi$ is the
invariant half-line of $f$~\cite{GKS2012}.

\section{Problem Formulation and MPG Representation}
\label{sec:problem}

\subsection{The Bi-Objective Problem}

We study the following tropical bi-objective pseudolinear optimization problem.

\begin{definition}
\label{def:biobjective}
(Tropical Bi-Objective Pseudolinear Optimization).
Given $p_1, p_2 \in \Rmax^n$, $q_1, q_2 \in (\mathbb{R} \cup \{+\infty\})^n$, 
$U, V \in \Rmax^{m \times n}$, $b, d \in \Rmax^m$, find the Pareto front of
\begin{equation}
\label{eq:biobjective}
  \min \begin{pmatrix} f_1(x) \\ f_2(x) \end{pmatrix} 
  = \min \begin{pmatrix} 
      x^- \otimes p_1 \oplus q_1^- \otimes x \\ 
      x^- \otimes p_2 \oplus q_2^- \otimes x 
    \end{pmatrix}
\end{equation}
subject to
\begin{equation}
\label{eq:constraint}
  U \otimes x \oplus b \leq V \otimes x \oplus d.
\end{equation}
\end{definition}

\begin{assumption}
\label{ass:wellposed}
(Well-posedness). The feasibility region
$\calR \subset \mathbb{R}^2$ is non-empty: there exists at least one
finite $x \in \mathbb{R}^n$ satisfying~\eqref{eq:constraint} that lies
on the Pareto front of~\eqref{eq:biobjective}.
\end{assumption}

\begin{remark}
Assumption~\ref{ass:wellposed} is a standing hypothesis throughout the
paper. It ensures that the Pareto front $\calP$ is non-empty and that
the algorithms have meaningful output. The infeasibility
certificate of Proposition~\ref{prop:infeasibility} provides an
explicit test for whether this assumption holds.
\end{remark}

\begin{remark}
The asymmetry in the spaces for $p_k$ and $q_k$ is deliberate and
follows from the conjugate operation. In the term $x^-\otimes p_k
= \max_j(p_{kj} - x_j)$, setting $p_{kj} = -\infty$ renders
component $j$ inactive (contributing $-\infty$ to the max), so
$p_k \in \Rmax^n = (\mathbb{R}\cup\{-\infty\})^n$ is the natural
space. In the term $q_k^-\otimes x = \max_j(x_j - q_{kj})$,
setting $q_{kj} = +\infty$ renders component $j$ inactive
(contributing $-\infty$ to the max), while $q_{kj} = -\infty$
would give $+\infty$ and make $f_k$ identically $+\infty$, so
$q_k \in (\mathbb{R}\cup\{+\infty\})^n$ is the correct space.
When $q_{1i} = +\infty$ (resp.\ $q_{2i} = +\infty$), the term $x_i$ does
not appear in $f_1$ (resp.\ $f_2$). When $p_{1i} = -\infty$ (resp.\
$p_{2i} = -\infty$), the term $x_i^-$ does not contribute to $f_1$
(resp.\ $f_2$). The constraint~\eqref{eq:constraint} is a general
two-sided tropical affine inequality describing tropical polyhedra,
as studied in~\cite{AGG2012, GKS2012, PSW2023}.
\end{remark}

\subsection{Parameterisation and Feasibility Region}
\label{sec:parameterisation}

We introduce a pair of parameters $(\lambda_1, \lambda_2) \in \mathbb{R}^2$ 
as upper bounds on the two objective functions. Problem~\eqref{eq:biobjective}--\eqref{eq:constraint} 
is equivalent to characterizing the set

\begin{equation}
\label{eq:feasibility_region}
  \calR = \left\{ (\lambda_1, \lambda_2) \in \mathbb{R}^2 : 
  \exists\, x \in \mathbb{R}^n \text{ finite, feasible, with } 
  f_1(x) \leq \lambda_1 \text{ and } f_2(x) \leq \lambda_2 \right\},
\end{equation}

and its lower boundary, the \emph{Pareto front}:

\begin{equation}
\label{eq:pareto_front}
  \calP = \left\{ (\lambda_1, \lambda_2) \in \calR : 
  \nexists\, (\lambda_1', \lambda_2') \in \calR \text{ with } 
  \lambda_1' < \lambda_1 \text{ and } \lambda_2' < \lambda_2 \right\}.
\end{equation}

We now show how the feasibility condition for a given
$(\lambda_1,\lambda_2)$ reduces to a single two-sided tropical system.
We introduce a homogenising scalar variable $t \in \mathbb{R}$ and set
$z = (x^\top, t)^\top \in \mathbb{R}^{n+1}$.

\medskip
\textbf{Step 1: Original constraint.}
The feasibility constraint~\eqref{eq:constraint} is
$U \otimes x \oplus b \leq V \otimes x \oplus d$.
Writing $b = b \otimes t^0$ and $d = d \otimes t^0$ (i.e.\ treating the
constant vectors as coefficients of the scalar $t$ evaluated at $t=0$),
this becomes the row group
\[
  \begin{pmatrix} U & b \end{pmatrix} \otimes z
  \;\leq\;
  \begin{pmatrix} V & d \end{pmatrix} \otimes z,
\]
which contributes the first $m$ rows of $A$ and $B(\lambda_1,\lambda_2)$.

\medskip
\textbf{Step 2: Bounding $f_1(x) \leq \lambda_1$.}
Since $f_1(x) = x^-\otimes p_1 \oplus q_1^-\otimes x$, the condition
$f_1(x) \leq \lambda_1$ splits into two parts:
\begin{align*}
  x^-\otimes p_1 \leq \lambda_1
  &\;\Longleftrightarrow\; \max_j(p_{1j} - x_j) \leq \lambda_1
   \;\Longleftrightarrow\; p_{1j} \leq \lambda_1 + x_j \text{ for all } j \\
  &\;\Longleftrightarrow\; (-\infty \mid p_1)\otimes z \leq (\lambda_1 I \mid -\infty)\otimes z,
\end{align*}
contributing the $p_1$ row group (rows $m+1$ to $m+n$), and
\begin{align*}
  q_1^-\otimes x \leq \lambda_1
  &\;\Longleftrightarrow\; \max_j(x_j - q_{1j}) \leq \lambda_1
   \;\Longleftrightarrow\; x_j - q_{1j} \leq \lambda_1 \text{ for all } j \\
  &\;\Longleftrightarrow\; (q_1^- \mid -\infty)\otimes z \leq (-\infty \mid \lambda_1)\otimes z,
\end{align*}
contributing the $q_1$ row group (a single scalar row $m+n+1$), where
$t$ plays the role of the homogenising variable carrying $\lambda_1$.

\medskip
\textbf{Step 3: Bounding $f_2(x) \leq \lambda_2$.}
By the same argument applied to $f_2$ and $\lambda_2$, we obtain the
$p_2$ row group (rows $m+n+2$ to $m+2n+1$) and the $q_2$ row group
(row $m+2n+2$), replacing $\lambda_1$ with $\lambda_2$ throughout.

\medskip
Collecting all five row groups into a single two-sided system
$A \otimes z \leq B(\lambda_1, \lambda_2) \otimes z$, a finite solution
$x \in \mathbb{R}^n$ together with any $t \in \mathbb{R}$ satisfies the
combined system if and only if $x$ is feasible for~\eqref{eq:constraint}
and $f_k(x) \leq \lambda_k$ for $k=1,2$. Therefore, $(\lambda_1,\lambda_2)
\in \calR$ if and only if the following parametric two-sided system

\begin{equation}
\label{eq:two_sided}
  A \otimes z \leq B(\lambda_1, \lambda_2) \otimes z
\end{equation}

has a finite solution $z \in \mathbb{R}^{n+1}$, where

\begin{equation}
\label{eq:AB_matrices}
  A = \begin{pmatrix} U & b \\ -\infty & p_1 \\ q_1^- & -\infty \\ 
  -\infty & p_2 \\ q_2^- & -\infty \end{pmatrix}, \qquad
  B(\lambda_1, \lambda_2) = \begin{pmatrix} V & d \\ \lambda_1 I & -\infty \\ 
  -\infty & \lambda_1 \\ \lambda_2 I & -\infty \\ -\infty & \lambda_2 
  \end{pmatrix}.
\end{equation}

Here $A$ and $B(\lambda_1, \lambda_2)$ are $(m+2n+2) \times (n+1)$
matrices, with rows partitioned into five groups: $m$ original constraint
rows, $n$ rows for $p_1$, one row for $q_1$, $n$ rows for $p_2$, and one
row for $q_2$. The parameters $\lambda_1$ and $\lambda_2$ enter $B$
through separate, disjoint groups of rows.

\subsection{Structure of the Bi-Objective Game}
\label{sec:game_structure}

We now analyse the specific mean-payoff game defined by $A$ and
$B(\lambda_1, \lambda_2)$ in~\eqref{eq:AB_matrices}. This game has
$N = m + 2n + 2$ nodes for Max (the rows) and $n+1$ nodes for Min
(the columns, corresponding to $x_1, \ldots, x_n$ and the
homogenising variable $t$).

\begin{proposition}
\label{prop:assumptions}
The matrices $A$ and $B(\lambda_1, \lambda_2)$ defined
in~\eqref{eq:AB_matrices} satisfy Assumptions~\ref{ass:max}
and~\ref{ass:min} provided that each row of $U, V$ has at least one
finite entry, each of $p_1, p_2$ has at least one finite entry, and
each of $q_1, q_2$ has at least one finite entry.
\end{proposition}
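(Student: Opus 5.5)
The plan is a direct entry-by-entry inspection of the block matrices in~\eqref{eq:AB_matrices}. Assumption~\ref{ass:max} is a condition on the \emph{rows} of $B(\lambda_1,\lambda_2)$: every Max node must have an outgoing arc. Assumption~\ref{ass:min} is a condition on the \emph{columns} of $A$: every Min node must have an outgoing arc. I will treat the five row groups of $B$ one at a time, then the $n+1$ columns of $A$ one at a time, using the convention that $q^-=-\infty$ exactly when $q=+\infty$.

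\textbf{Assumption~\ref{ass:max}.} Take a row $k$ of $B(\lambda_1,\lambda_2)$ and go through the five groups.
\begin{itemize}
\item If $k\le m$, the row is $(V_{k\cdot}\mid d_k)$. It has a finite entry because, by hypothesis, every row of $V$ has one.
\item In the $p_1$ group the row is $(\lambda_1 e_j^\top\mid-\infty)$, whose diagonal entry $\lambda_1\in\mathbb{R}$ is finite.
\item In the $q_1$ group the row is $(-\infty\mid\lambda_1)$, which is finite in the $t$-column.
\item The $p_2$ and $q_2$ groups are the same with $\lambda_2$ in place of $\lambda_1$.
\end{itemize}
Hence Assumption~\ref{ass:max} holds for every $(\lambda_1,\lambda_2)\in\mathbb{R}^2$. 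This is the point worth stressing for later use, since the parametric analysis needs the assumption uniformly in the parameters.

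\textbf{Assumption~\ref{ass:min}.} First take the homogenising column $t$. Its entries are $b$, $p_1$, $-\infty$, $p_2$, $-\infty$, stacked by row group. So it contains a finite entry as soon as $p_1$ (or $p_2$) has one, which is part of the hypothesis.

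Next take a column $j\le n$. Its entries are $U_{\cdot j}$, then $-\infty$ throughout the $p_1$ block, then $q_{1j}^-$, then $-\infty$ throughout the $p_2$ block, then $q_{2j}^-$. So Min has a move at node $j$ exactly when column $j$ of $U$ has a finite entry, or $q_{1j}<+\infty$, or $q_{2j}<+\infty$.

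\textbf{Main obstacle.} The last step is where I expect the difficulty. The stated hypothesis only guarantees finite entries in each \emph{row} of $U$ and \emph{somewhere} in $q_1$ and in $q_2$. That does not obviously force a finite entry in every column $j$. I would first try to extract the column condition from the hypothesis together with the standing Assumption~\ref{ass:wellposed}. If that fails, I would make explicit the condition actually needed: for each $j$, either $q_{1j}$ or $q_{2j}$ is finite, or $U$ has a finite entry in column $j$. For the hypothesis as literally stated, the clean route is to read ``each of $q_1,q_2$ has a finite entry'' in this componentwise sense, and I would flag this as a needed clarification.

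As an alternative, I would argue that a column $j$ of $A$ consisting entirely of $-\infty$ imposes no upper constraint on $x_j$. In that case one may adjoin a harmless row to both $A$ and $B(\lambda_1,\lambda_2)$, say $a=0$ in column $j$ on the left and $b=0$ in column $j$ on the right, encoding $x_j\le x_j$. This leaves the solution set of~\eqref{eq:two_sided}, and therefore $\calR$, unchanged, and it restores Assumption~\ref{ass:min}.
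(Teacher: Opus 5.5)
Your check of Assumption~\ref{ass:max} and of the $t$-column is correct. You also read Assumption~\ref{ass:min} correctly as a condition on the \emph{columns} of $A$: each Min node $v_j$ needs an outgoing arc. The paper's proof checks something else, namely that every \emph{row} of $A$ has a finite entry. That is not Assumption~\ref{ass:min}, and the row claim is itself false: the $j$-th row of a $p_k$ block is $(-\infty,\dots,-\infty\mid p_{kj})$, which is identically $-\infty$ when $p_{kj}=-\infty$ (see rows~4 and~6 of $A$ in Example~1). So the paper never addresses the column condition for $j\le n$, which is exactly where you got stuck.

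Your suspicion is right, and you should settle it with a counterexample rather than hedge between three options: the proposition is false as stated. Take $n=2$, $m=1$, $U=(0,\,-\infty)$, $V=(0,\,0)$, $b=d=-\infty$, $p_1=p_2=(0,0)^\top$, $q_1=q_2=(0,+\infty)^\top$. All hypotheses hold. Yet column~2 of the $7\times 3$ matrix $A$ is identically $-\infty$: $U_{12}=-\infty$, the $p_k$ blocks vanish off the $t$-column, and $q_{12}^-=q_{22}^-=-\infty$. So Min has no move at $v_2$.

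Assumption~\ref{ass:wellposed} does not rescue the statement, so your first fallback fails. In this example the constraint reads $x_1\le x_1\oplus x_2$, which always holds. Both objectives are $f_1=f_2=\max(-x_1,-x_2,x_1)$, so $\calR=[0,\infty)^2$, and $x=(0,0)^\top$ attains the Pareto point $(0,0)$. This is no accident: an all-$(-\infty)$ column $j$ of $A$ just means nothing bounds $x_j$ from above, and no feasibility hypothesis can exclude that.

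What your inspection actually proves is an exact criterion:
\begin{itemize}
\item Assumption~\ref{ass:max} holds iff every row of $(V\ d)$ has a finite entry.
\item Assumption~\ref{ass:min} holds iff some entry of $b$, $p_1$ or $p_2$ is finite and, for each $j\le n$, column $j$ of $U$ has a finite entry or $q_{1j}<+\infty$ or $q_{2j}<+\infty$.
\end{itemize}
The hypothesis on the rows of $U$ plays no role. The stated hypothesis is not even necessary: in Example~2, row~2 of $U$ and row~1 of $V$ are identically $-\infty$, yet both assumptions hold.

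Your padding row $x_j\le x_j$ is a sound repair, since it leaves the solution set of~\eqref{eq:two_sided}, and hence $\calR$, unchanged. But it alters the matrices, so it proves a statement about a modified game, not about~\eqref{eq:AB_matrices}. Present either fix as an explicit correction of the hypothesis, not as a ``reading'' of it.
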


\begin{proof}
Assumption~\ref{ass:max} (every row of $B(\lambda_1,\lambda_2)$ has a
finite entry) holds because: constraint rows of $B$ inherit finite
entries from $V$ and $d$, the $p_k$ rows have $\lambda_k$ on the
diagonal, which is finite for any $(\lambda_1,\lambda_2) \in \mathbb{R}^2$,
the $q_k$ rows have $\lambda_k$ in the column corresponding to $t$,
again finite. Assumption~\ref{ass:min} (every row of $A$ has a finite
entry) holds because, constraint rows inherit from $U$ and $b$, the
$p_k$ rows have finite entries from $p_k$, the $q_k$ rows have finite
entries from $q_k^-$.
\end{proof}

The key structural feature of this game is how the two parameters
$\lambda_1$ and $\lambda_2$ enter the arc weights. Inspecting
$B(\lambda_1,\lambda_2)$, arcs in the $C_2 \cup C_3$ group carry weight
$\lambda_1$, arcs in the $C_4 \cup C_5$ group carry weight $\lambda_2$,
and all constraint arcs ($C_1$) carry weights from the data matrices
$V, d$, independent of $\lambda_1$ and $\lambda_2$. This is the
bi-objective analogue of the single parameter $\lambda$ entering through
one group of arcs in the single-objective construction of~\cite{GKS2012}.

\begin{lemma}[Cycle mean formula]
\label{lem:cyclemean}
Let $\gamma$ be a cycle in the bipartite digraph of the game defined by
$A$ and $B(\lambda_1, \lambda_2)$. Following the arc-weight
convention of~\cite{GKS2012},
arc weights are $-a_{ij}$ for Min-to-Max arcs and $b_{ij}$ for
Max-to-Min arcs. Let $k_1(\gamma)$ denote the number
of arcs in $\gamma$ carrying weight $\lambda_1$ (from group $C_2 \cup
C_3$), $k_2(\gamma)$ the number carrying $\lambda_2$ (from group $C_4
\cup C_5$), and $s(\gamma)$ the sum of all remaining arc weights in
$\gamma$, that is, $-a_{ij}$ for data Min-to-Max arcs and $b_{ij}$
for data Max-to-Min arcs. Then the mean weight per turn of $\gamma$ is
\begin{equation}
\label{eq:cyclemean}
  \mu(\gamma, \lambda_1, \lambda_2) 
  = \frac{k_1(\gamma)\,\lambda_1 + k_2(\gamma)\,\lambda_2 + s(\gamma)}
         {|\gamma|},
\end{equation}
where $|\gamma|$ is the total number of arcs in $\gamma$.
\end{lemma}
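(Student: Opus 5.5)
The formula follows from the definition of the mean weight per turn, once we sort the arcs of $\gamma$ by where their weights come from. By the convention recalled before Theorem~\ref{thm:mpg_value}, the total weight $w(\gamma)$ of a cycle in the bipartite game digraph is the sum of $-a_{ij}$ over its Min-to-Max arcs plus the sum of $b_{ij}$ over its Max-to-Min arcs. The mean weight per turn is $w(\gamma)/|\gamma|$. So it suffices to show
\[
w(\gamma)=k_1(\gamma)\lambda_1+k_2(\gamma)\lambda_2+s(\gamma)
\]
and then divide by $|\gamma|$.

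First we examine which entries of $A$ and $B(\lambda_1,\lambda_2)$ in \eqref{eq:AB_matrices} depend on the parameters. Every entry of $A$ comes from $U,b,p_1,p_2,q_1^-,q_2^-$, so every Min-to-Max arc weight $-a_{ij}$ is a data constant. In $B(\lambda_1,\lambda_2)$, the finite entries split into three kinds:
\begin{itemize}
\item the constraint rows $(V\mid d)$, which are data constants;
\item the rows $(\lambda_1 I\mid-\infty)$ and $(-\infty\mid\lambda_1)$ of groups $C_2\cup C_3$, whose finite entries all equal exactly $\lambda_1$;
\item the rows $(\lambda_2 I\mid-\infty)$ and $(-\infty\mid\lambda_2)$ of groups $C_4\cup C_5$, whose finite entries all equal exactly $\lambda_2$.
\end{itemize}
These row groups are disjoint, so each Max-to-Min arc has weight that is either a constant, exactly $\lambda_1$, or exactly $\lambda_2$, and never a mixture. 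Arcs with $-\infty$ entries are prohibited and cannot lie on $\gamma$.

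Next we partition the arcs of $\gamma$ into three classes: the $\lambda_1$-arcs (there are $k_1(\gamma)$ of them), the $\lambda_2$-arcs ($k_2(\gamma)$ of them), and all remaining arcs. The remaining arcs are the data Min-to-Max arcs and the data Max-to-Min arcs from the constraint rows. Summing $w(\gamma)$ over these three classes gives $k_1\lambda_1$, $k_2\lambda_2$, and, by definition, $s(\gamma)$, which is the displayed identity. Dividing by $|\gamma|$ yields \eqref{eq:cyclemean}.

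The only point needing care is that the classification is well defined. Each $\lambda$-arc is a Max-to-Min arc, so no Min-to-Max arc is counted in $k_1$ or $k_2$. Also, no arc can carry $\lambda_1$ and $\lambda_2$ at once, because the two parameters sit in disjoint rows. A cycle may traverse the same arc more than once if it is not elementary; counting arcs with multiplicity handles this. There is no real obstacle beyond this bookkeeping.
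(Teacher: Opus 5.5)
Your proof is correct and follows the same route as the paper: you split the total weight of $\gamma$ into the $\lambda_1$-arcs, the $\lambda_2$-arcs and the remaining data arcs, then divide by $|\gamma|$. Your inspection of the row groups of $A$ and $B(\lambda_1,\lambda_2)$ spells out what the paper compresses into ``by definition of the arc weights,'' and you correctly identify every $\lambda$-arc as a Max-to-Min arc, whereas the paper's proof has a slip calling them ``Max-to-Min or Min-to-Min'' arcs.
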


\begin{proof}
The total weight of $\gamma$ is $k_1(\gamma)\lambda_1 +
k_2(\gamma)\lambda_2 + s(\gamma)$ by definition of the arc weights,
where the $\lambda_{1,2}$-carrying Max-to-Min or Min-to-Min arcs contribute $k_{1,2}(\gamma)\lambda_{1,2}$
and the remaining arcs contribute $s(\gamma)$.
Dividing by the cycle length $|\gamma|$ gives the mean weight per turn.
\end{proof}

\begin{figure}[ht]
\centering
\begin{tikzpicture}[
  minnode/.style={circle,draw=black,fill=white,
    minimum size=0.72cm,inner sep=1pt,thick,font=\small},
  lam1node/.style={rectangle,draw=blue!70!black,fill=blue!8,
    minimum size=0.72cm,inner sep=1pt,thick,font=\small},
  lam2node/.style={rectangle,draw=red!70!black,fill=red!8,
    minimum size=0.72cm,inner sep=1pt,thick,font=\small},
  consnode/.style={rectangle,draw=gray!60!black,fill=gray!8,
    minimum size=0.72cm,inner sep=1pt,thick,font=\small},
  ->,>=Stealth,thick
]
\node[minnode](v1) at (0, 3.2){$v_1$};
\node[minnode](v2) at (0, 1.2){$v_2$};
\node[minnode](v0) at (0,-1.2){$v_0$};
\node[consnode](R1) at (-3.5, 3.2){$R_1$};
\node[consnode](R2) at (-3.5, 1.2){$R_2$};
\node[lam1node](L1a) at (3.8, 4.0){$L_1^{(1)}$};
\node[lam1node](L1b) at (3.8, 2.2){$L_1^{(2)}$};
\node[lam1node](L1c) at (3.8, 0.2){$L_1^{(0)}$};
\node[lam2node](L2a) at (3.8,-0.8){$L_2^{(1)}$};
\node[lam2node](L2b) at (3.8,-2.0){$L_2^{(2)}$};
\node[lam2node](L2c) at (3.8,-3.2){$L_2^{(0)}$};

\draw (v1) -- node[above,font=\scriptsize]{$0$} (R1);
\draw (v2) -- node[above,font=\scriptsize]{$0$} (R2);
\draw (v0) to[bend left=15] node[above,font=\scriptsize]{$-2$} (L1a);
\draw (v2) -- node[above,font=\scriptsize]{$0$} (L1c);
\draw (v0) to[bend right=10] node[below,font=\scriptsize]{$0$} (L2b);
\draw (v1) to[bend right=15] node[below,font=\scriptsize]{$0$} (L2c);

\draw (R1) -- node[above,font=\scriptsize]{$1$} (v2);
\draw (R2) -- node[above,font=\scriptsize]{$1$} (v1);
\draw (R2) to[bend left=20] node[right,font=\scriptsize]{$-2$} (v0);
\draw (L1a) -- node[above,font=\scriptsize]{$\lambda_1$} (v1);
\draw (L1b) -- node[above,font=\scriptsize]{$\lambda_1$} (v2);
\draw (L1c) -- node[above,font=\scriptsize]{$\lambda_1$} (v0);
\draw (L2a) to[bend left=10] node[above,font=\scriptsize]{$\lambda_2$} (v1);
\draw (L2b) -- node[above,font=\scriptsize]{$\lambda_2$} (v2);
\draw (L2c) -- node[above,font=\scriptsize]{$\lambda_2$} (v0);

\draw[gray!50,dashed,rounded corners]
  (-4.8, 0.5)--(-4.8,4.0)--(-2.2,4.0)--(-2.2,0.5)--cycle;
\node[font=\small,gray!60!black] at (-3.5,-0.2) {$C_1$};
\draw[blue!40,dashed,rounded corners]
  (2.8,-0.2)--(2.8,4.8)--(5.0,4.8)--(5.0,-0.2)--cycle;
\node[font=\small,blue!70!black] at (3.9,5.2) {$C_2\cup C_3\;(\lambda_1)$};
\draw[red!40,dashed,rounded corners]
  (2.8,-3.8)--(2.8,-0.4)--(5.0,-0.4)--(5.0,-3.8)--cycle;
\node[font=\small,red!70!black] at (3.9,-4.2) {$C_4\cup C_5\;(\lambda_2)$};
\end{tikzpicture}
\caption{The parametric MPG for Example~1 ($n=m=2$). Min nodes
$v_1,v_2$ (circles) correspond to variables $x_1,x_2$; $v_0$ is the
homogenising variable $t$. Max nodes are partitioned into the constraint
group $C_1 = \{R_1,R_2\}$ (grey), the $\lambda_1$ group
$C_2\cup C_3 = \{L_1^{(1)},L_1^{(2)},L_1^{(0)}\}$ (blue), and the
$\lambda_2$ group $C_4\cup C_5 = \{L_2^{(1)},L_2^{(2)},L_2^{(0)}\}$
(red). Min-to-Max arc weights are $-a_{ij}$ (negatives of $A$ entries);
Max-to-Min arc weights are $b_{ij}$ (entries of $B(\lambda_1,\lambda_2)$).}
\label{fig:mpg_example1}
\end{figure}

\begin{example}[Cycle mean illustration]
\label{ex:cyclemean}
Consider the cycle
$\gamma: v_0 \to L_1^{(1)} \to v_1 \to L_2^{(0)} \to v_0$
in Figure~\ref{fig:mpg_example1}. Following the arc weight convention
of Lemma~\ref{lem:cyclemean}, its four arcs have weights:
\[
  v_0 \xrightarrow{-p_{11}} L_1^{(1)} \xrightarrow{\lambda_1} v_1
  \xrightarrow{-a_{L_2^{(0)},v_1}} L_2^{(0)} \xrightarrow{\lambda_2} v_0.
\]
With $p_{11}=2$ and $a_{L_2^{(0)},v_1}=0$ from the matrices of
Example~1 (defined explicitly in Section~8.2), the data arc weights are $-p_{11}=-2$ and $0$ respectively.
Thus $k_1(\gamma)=1$, $k_2(\gamma)=1$,
$s(\gamma)=-2+0=-2$, and $|\gamma|=4$. By
Lemma~\ref{lem:cyclemean},
\[
  \mu(\gamma,\lambda_1,\lambda_2)
  = \frac{1\cdot\lambda_1 + 1\cdot\lambda_2 + (-2)}{4}
  = \frac{\lambda_1+\lambda_2-2}{4}.
\]
The non-negativity condition $\mu(\gamma,\lambda_1,\lambda_2)\geq 0$
gives the halfspace $\lambda_1+\lambda_2 \geq 2$, which is precisely
the binding constraint defining the Pareto front $\calP$.
\end{example}

 Since
$k_1(\gamma), k_2(\gamma) \geq 0$, the cycle mean is a non-decreasing
linear function of $(\lambda_1, \lambda_2)$. The condition
$\mu(\gamma, \lambda_1, \lambda_2) \geq 0$ defines the halfspace
\[
  k_1(\gamma)\,\lambda_1 + k_2(\gamma)\,\lambda_2 \geq -s(\gamma),
\]
and $\mathcal{R}$ is the intersection of all such halfspaces over all
cycles $\gamma$ and all Min strategies $\tau$. This is the geometric
origin of the convexity of $\mathcal{R}$ proved in
Theorem~\ref{thm:pareto_structure}(i).

The two groups $C_2 \cup C_3$ (carrying $\lambda_1$) and $C_4 \cup C_5$
(carrying $\lambda_2$) are \emph{disjoint} subsets of the Max nodes.
A single elementary cycle in the bipartite digraph can pass through at
most one node of Max per Min node it visits. Therefore, each elementary
cycle satisfies
\[
  k_1(\gamma) + k_2(\gamma) \leq |\gamma|/2,
\]
with equality only if the cycle alternates perfectly between the
$\lambda$-carrying groups and Min nodes. This is the reason
for the denominator bound in Theorem~\ref{thm:pareto_structure}(iii),
the denominator $|\gamma|$ of the breakpoint coordinates is bounded by
the number of nodes, giving the factor of $\tfrac{1}{2}$.

Applying Theorems~\ref{thm:mpg_value} and~\ref{thm:duality} to the
game with matrices $A$ and $B(\lambda_1, \lambda_2)$, we obtain the
following bi-objective analogues of Theorems 1, 3 and 4 of~\cite{GKS2012}.

\begin{proposition}
\label{prop:biobjective_game}
Let $A$ and $B(\lambda_1, \lambda_2)$ be as in~\eqref{eq:AB_matrices},
with the data satisfying the conditions of
Proposition~\ref{prop:assumptions}. Then:
\begin{enumerate}
\item[(i)] For each fixed $(\lambda_1, \lambda_2) \in \mathbb{R}^2$,
the mean-payoff game has a uniquely determined value vector
$\chi(\overline{A}B(\lambda_1, \lambda_2)) \in \mathbb{R}^{n+1}$,
whose $j$-th component $\chi_j = \chi_j(\overline{A}B(\lambda_1,\lambda_2))$
gives the long-run average payoff per turn when the game starts from
Min node $v_j$. Moreover, there exist optimal positional strategies
$\sigma^* \in \mathcal{S}$ and $\tau^* \in \mathcal{T}$ such that:
$\sigma^*$ secures Max a mean profit of at least $\chi_j$ from any
starting node $v_j$, regardless of Min's strategy; and $\tau^*$
secures a mean loss of no more than $\chi_j$ from any starting node
$v_j$, regardless of Max's strategy.

\item[(ii)] The value satisfies the duality:
\[
  \max_{\sigma \in \mathcal{S}} \chi(f^\sigma(\lambda_1,\lambda_2))
  = \chi(\overline{A}B(\lambda_1,\lambda_2))
  = \min_{\tau \in \mathcal{T}} \chi(f^\tau(\lambda_1,\lambda_2)).
\]

\item[(iii)] For each strategy $\sigma$ of Max, fixing $\sigma$
reduces the two-player game to a min-plus one-player game $G^\sigma$
in which Min controls all moves. Since Min seeks to minimize Max's
payoff, the cycle-time $\chi_j(f^\sigma(\lambda_1,\lambda_2))$ is the
\emph{minimal} cycle mean over all cycles in $G^\sigma$ accessible
from $j$, not maximal, because it is Min who optimizes against the
fixed $\sigma$. A cycle $\gamma$ in $G^\sigma$ is \emph{accessible
from $j$} if there exists a directed path from $j$ to some node of
$\gamma$ in $G^\sigma$. Since each node in $G^\sigma$ has exactly one
outgoing arc (determined by $\sigma$), every node eventually leads to
exactly one cycle in $G^\sigma$, and the minimum is taken over all
such cycles reachable from $j$. Explicitly:
\[
  \chi_j(f^\sigma(\lambda_1,\lambda_2))
  = \min_{\substack{\text{cycles } \gamma \text{ in } G^\sigma \\ \text{accessible from } j}}
    \frac{k_1(\gamma)\lambda_1 + k_2(\gamma)\lambda_2 + s(\gamma)}{|\gamma|},
\]
where $k_1(\gamma), k_2(\gamma) \geq 0$ are the $\lambda_1$-count and
$\lambda_2$-count of $\gamma$ in $G^\sigma$, $s(\gamma)$ is the sum
of its data weights, and $|\gamma|$ is its length. Since $k_1(\gamma), k_2(\gamma) \geq 0$ for each elementary cycle in $G^\sigma$ 
(as established in Lemma~4.5), each cycle mean $(k_1(\gamma)\lambda_1 + 
k_2(\gamma)\lambda_2 + s(\gamma))/|\gamma|$ is linear in $(\lambda_1, \lambda_2)$. 
Consequently, $(\lambda_1, \lambda_2) \mapsto \chi_j(f^\sigma(\lambda_1, \lambda_2))$ 
is a minimum of finitely many linear functions of $(\lambda_1, \lambda_2)$, hence 
piecewise linear, continuous, and componentwise non-decreasing.

\item[(iv)] The feasibility condition $(\lambda_1, \lambda_2) \in \calR$
holds if and only if all components of the cycle-time vector
$\chi(\overline{A}B(\lambda_1, \lambda_2))$ are non-negative, i.e.\
$\chi_j(\overline{A}B(\lambda_1,\lambda_2)) \geq 0$ for all
$j \in [n+1]$.
\end{enumerate}
\end{proposition}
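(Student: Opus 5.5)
The plan is to deduce each item from the general results recalled in Section~\ref{sec:prelim}, applied to the fixed pair $(A, B(\lambda_1,\lambda_2))$. No new game-theoretic argument is needed; the two-parameter dependence only enters through Lemma~\ref{lem:cyclemean}.

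First fix $(\lambda_1,\lambda_2)\in\mathbb{R}^2$. By Proposition~\ref{prop:assumptions}, the matrices $A$ and $B(\lambda_1,\lambda_2)$ satisfy Assumptions~\ref{ass:max} and~\ref{ass:min}, and this holds for every real pair since $\lambda_1,\lambda_2$ are finite. Theorem~\ref{thm:mpg_value} then gives (i) verbatim: the value vector $\chi=\chi(\overline{A}B(\lambda_1,\lambda_2))$, its uniqueness via the min-max and max-min characterization, the interpretation of $\chi_j$ from Min node $v_j$, and the optimal positional strategies $\sigma^*,\tau^*$. Part (ii) is Theorem~\ref{thm:duality} applied to the same pair.

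For (iii), fix $\sigma\in\mathcal{S}$. Then $f^\sigma$ is min-plus linear, so by the min-plus analogue of~\eqref{eq:cycletime_maxplus}, $\chi_j(f^\sigma)$ is the minimal cycle mean over cycles accessible from $j$ in the digraph of $f^\sigma$. Each arc of that digraph corresponds to a two-step path $j\to i\to\sigma(i)$ in $G^\sigma$. Cycles in the min-plus digraph therefore correspond to cycles of $G^\sigma$ with doubled length and the same total weight. The factor is consistent with the per-turn normalization of $\Phi_{A,B}$ in Theorem~\ref{thm:mpg_value}, so I will state the means in that normalization. Lemma~\ref{lem:cyclemean} then expresses each such mean as $(k_1\lambda_1+k_2\lambda_2+s)/|\gamma|$, with $k_1,k_2\ge 0$ integers counting arcs. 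The graph of $G^\sigma$ does not depend on $(\lambda_1,\lambda_2)$, because the finiteness pattern of $B$ is independent of the parameters. Hence the set of accessible elementary cycles is finite and fixed, and the minimum is taken over a fixed finite family of affine functions with nonnegative coefficients. Such a minimum is continuous, piecewise affine, concave, and componentwise nondecreasing. Non-elementary cycles can be ignored because their mean is a convex combination of elementary cycle means.

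For (iv), apply Proposition~\ref{prop:solvability} for each $j\in[n+1]$. If $\chi_j\ge 0$ for all $j$, we obtain for each $j$ a solution $z^{(j)}$ of~\eqref{eq:two_sided} with $z^{(j)}_j$ finite. The solution set of $A\otimes z\le B\otimes z$ is closed under tropical addition $\oplus$, so $z=\bigoplus_j z^{(j)}$ is a fully finite solution. The construction in Section~\ref{sec:parameterisation} then shows $(\lambda_1,\lambda_2)\in\calR$. Conversely, a finite $x$ with $f_k(x)\le\lambda_k$ gives a finite $z=(x,0)$, and Proposition~\ref{prop:solvability} yields $\chi_j\ge 0$ for every $j$.

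The main obstacle is the closure of the solution set under $\oplus$. I would verify it directly: $A\otimes(z\oplus w)=A\otimes z\oplus A\otimes w\le B\otimes z\oplus B\otimes w=B\otimes(z\oplus w)$. A secondary point needing care is keeping the length and normalization conventions in (iii) consistent between the bipartite game and the one-player function.
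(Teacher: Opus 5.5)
Your proposal is correct and follows the paper's proof item by item: Theorem~\ref{thm:mpg_value} for (i), Theorem~\ref{thm:duality} for (ii), the min-plus cycle-time formula together with Lemma~\ref{lem:cyclemean} for (iii), and Proposition~\ref{prop:solvability} for (iv). You are more careful than the paper in two places: in (iv) your $\oplus$-closure argument supplies the step from the coordinatewise conclusion of Proposition~\ref{prop:solvability} to a single fully finite $z$, which the paper's proof skips; and in (iii) you work in the Min-node digraph of $f^\sigma$, where Min nodes still have several outgoing arcs (so the statement's remark that every node of $G^\sigma$ has a unique outgoing arc holds only for Max nodes), and you flag the factor $2$ between the arc count $|\gamma|$ and the number of turns, which rescales the displayed formula by a positive constant but leaves every sign-based and qualitative conclusion intact.
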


\begin{proof}
\textbf{ (i).} For each fixed $(\lambda_1,\lambda_2)\in\mathbb{R}^2$,
the matrices $A$ and $B(\lambda_1,\lambda_2)$ satisfy
Assumptions~\ref{ass:max} and~\ref{ass:min} by
Proposition~\ref{prop:assumptions}. Theorem~\ref{thm:mpg_value} then
guarantees that the mean-payoff game with these matrices has a unique
value vector $\chi = \chi(\overline{A}B(\lambda_1,\lambda_2))\in\mathbb{R}^{n+1}$
and optimal positional strategies $\sigma^*\in\mathcal{S}$ and
$\tau^*\in\mathcal{T}$. The value $\chi_j$ is the $j$-th component of
the cycle-time vector of the min-max function $x\mapsto\overline{A}\otimes
(B(\lambda_1,\lambda_2)\otimes x)$, giving the long-run average payoff
per turn starting from Min node $v_j$.

\textbf{(ii).} Fix $(\lambda_1,\lambda_2)\in\mathbb{R}^2$. By
Theorem~\ref{thm:duality} applied to $A$ and $B(\lambda_1,\lambda_2)$:
\[
  \max_{\sigma\in\mathcal{S}}\chi(f^\sigma(\lambda_1,\lambda_2))
  = \chi(\overline{A}B(\lambda_1,\lambda_2))
  = \min_{\tau\in\mathcal{T}}\chi(f^\tau(\lambda_1,\lambda_2)).
\]
The left equality says that Max can guarantee the full game value by
playing the optimal strategy $\sigma^*$ that maximizes his one-player
cycle-time. The right equality says that Min can guarantee the same
value by playing the optimal strategy $\tau^*$ that minimizes Max's
one-player cycle-time. The two sides meeting at the same value is the
minimax property of the MPG.

\textbf{(iii).} Fix a strategy $\sigma\in\mathcal{S}$ of Max.
The min-max function $f$ reduces to the min-plus linear function
$f^\sigma_j(x) = \min_{i\in[m]}(-a_{ij}+b_{i\sigma(i)}+x_{\sigma(i)})$.
In the one-player game $G^\sigma$, each node has exactly one outgoing
arc, so each Min node $j$ has a unique directed path leading to a
unique cycle $\gamma$ in $G^\sigma$ accessible from $j$.
By~\eqref{eq:cycletime_maxplus}, the cycle-time $\chi_j(f^\sigma)$
equals the minimal cycle mean over all cycles $\gamma$ in $G^\sigma$
accessible from $j$:
\[
  \chi_j(f^\sigma(\lambda_1,\lambda_2))
  = \min_{\substack{\text{cycles }\gamma\text{ in }G^\sigma \\ \text{accessible from }j}}
    \frac{k_1(\gamma)\lambda_1 + k_2(\gamma)\lambda_2 + s(\gamma)}{|\gamma|},
\]
where the mean weight of each cycle $\gamma$ in $G^\sigma$ is given
explicitly by Lemma~\ref{lem:cyclemean} as
$(k_1(\gamma)\lambda_1+k_2(\gamma)\lambda_2+s(\gamma))/|\gamma|$,
with $k_1(\gamma),k_2(\gamma)\geq 0$ counting the $\lambda_1$-arcs and
$\lambda_2$-arcs in $\gamma$, and $s(\gamma)$ the sum of its data weights.

\textbf{(iv).} By construction of $A$ and $B(\lambda_1,\lambda_2)$
in~\eqref{eq:AB_matrices}, $(\lambda_1,\lambda_2)\in\calR$ if and only
if the two-sided system $A\otimes z\leq B(\lambda_1,\lambda_2)\otimes z$
has a finite solution $z\in\mathbb{R}^{n+1}$ (as shown in
Section~\ref{sec:parameterisation}). By
Proposition~\ref{prop:solvability} applied to these matrices, this
holds if and only if $\chi_j(\overline{A}B(\lambda_1,\lambda_2))\geq 0$
for all $j\in[n+1]$.
\end{proof}

We now define the key scalar function that governs feasibility and
encodes the full structure of $\calR$ and $\calP$.

Define the function $\Phi: \mathbb{R}^2 \to \mathbb{R}$ by

\begin{equation}
\label{eq:phi}
  \Phi(\lambda_1, \lambda_2) = \min_i \chi_i\!\left(\overline{A}\, 
  B(\lambda_1, \lambda_2)\right),
\end{equation}

where $\chi_i$ denotes the $i$-th component of the cycle-time vector 
of the min-max function defined by $A$ and $B(\lambda_1, \lambda_2)$. 
By Proposition~\ref{prop:biobjective_game}(iv), $(\lambda_1, \lambda_2) \in \calR$ 
if and only if $\Phi(\lambda_1, \lambda_2) \geq 0$.

Define also the strategy-restricted functions, for a strategy $\tau$ of 
Min and a strategy $\sigma$ of Max,
\[
  \Phi_\tau(\lambda_1, \lambda_2) = \min_i \chi_i\!\left(\overline{A_\tau}\, 
  B(\lambda_1, \lambda_2)\right), \qquad
  \Phi^\sigma(\lambda_1, \lambda_2) = \min_i \chi_i\!\left(\overline{A}\, 
  B_\sigma(\lambda_1, \lambda_2)\right),
\]
where $(A_\tau)_{ij} = a_{ij}$ if $i = \tau(j)$ and $-\infty$ otherwise, 
and $(B_\sigma)_{ij} = b_{ij}$ if $j = \sigma(i)$ and $-\infty$ otherwise.

\begin{proposition}
\label{prop:saddle}
Let $\mathcal{T}$ be the set of all strategies of player Min and 
$\mathcal{S}$ be the set of all strategies of player Max. Then
\begin{equation}
\label{eq:saddle}
  \min_{\tau \in \mathcal{T}} \Phi_\tau(\lambda_1, \lambda_2) 
  = \Phi(\lambda_1, \lambda_2) 
  = \max_{\sigma \in \mathcal{S}} \Phi^\sigma(\lambda_1, \lambda_2).
\end{equation}
\end{proposition}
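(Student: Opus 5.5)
The plan is to derive \eqref{eq:saddle} componentwise from the duality Theorem~\ref{thm:duality} (equivalently Proposition~\ref{prop:biobjective_game}(ii)) and then pass the minimum over components through the outer $\min_\tau$ and $\max_\sigma$. Two things must be checked first. The first is that the functions $\Phi_\tau$ and $\Phi^\sigma$, defined via the restricted matrices $A_\tau$ and $B_\sigma$, coincide with $\min_i \chi_i(f^\tau)$ and $\min_i\chi_i(f^\sigma)$. The second is that the inequality chain in the two directions survives taking $\min_i$.

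\textbf{Step 1 (identification).} For a Min strategy $\tau$, the matrix $A_\tau$ keeps only the entry $a_{\tau(j)j}$ in column $j$. Hence $(\overline{A_\tau}\otimes (B\otimes x))_j = -a_{\tau(j)j} + \max_l (b_{\tau(j)l}+x_l) = f^\tau_j(x)$, so $\chi(\overline{A_\tau}B)=\chi(f^\tau)$. Similarly, $B_\sigma$ keeps only $b_{i\sigma(i)}$ in row $i$, so $\overline{A}\otimes(B_\sigma\otimes x)=f^\sigma(x)$ and $\chi(\overline{A}B_\sigma)=\chi(f^\sigma)$. The pairs $(A_\tau,B)$ and $(A,B_\sigma)$ still satisfy Assumptions~\ref{ass:max}--\ref{ass:min}, since each retained column or row has a finite entry by the definition of a positional strategy. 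Their cycle-time vectors are therefore well defined.

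\textbf{Step 2 (easy inequalities).} Theorem~\ref{thm:duality} gives, for every $\sigma$, $\tau$ and every component $i$,
\[
\chi_i(f^\sigma)\le \chi_i(f)\le \chi_i(f^\tau).
\]
Taking $\min_i$ preserves these inequalities, so $\Phi^\sigma\le\Phi\le\Phi_\tau$. Consequently $\max_\sigma\Phi^\sigma\le\Phi\le\min_\tau\Phi_\tau$.

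\textbf{Step 3 (attainment, the main point).} Here the subtlety is that \eqref{eq:duality} is a componentwise statement. A priori the maximising $\sigma$ could depend on the component $i$, and $\min_i\max_\sigma\neq\max_\sigma\min_i$ in general. To avoid this, I would use the uniform optimal strategies of Theorem~\ref{thm:mpg_value} (Proposition~\ref{prop:biobjective_game}(i)). There is a single $\sigma^*$ securing at least $\chi_j$ from every starting node $j$ against any Min strategy, which gives $\chi(f^{\sigma^*})\ge\chi(f)$ componentwise. Together with Step 2 this yields $\chi(f^{\sigma^*})=\chi(f)$ as vectors. Likewise a single $\tau^*$ gives $\chi(f^{\tau^*})=\chi(f)$. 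Then $\Phi^{\sigma^*}=\Phi=\Phi_{\tau^*}$, which attains both extrema and proves \eqref{eq:saddle}.

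Justifying that the uniform strategy yields $\chi(f^{\sigma^*})\ge\chi(f)$ needs an argument. I would show that the one-player value of $f^{\sigma^*}$ at $j$ equals Min's best response against $\sigma^*$, namely $\min_\tau\Phi_{A,B}(j,\tau,\sigma^*)$, which is at least $\chi_j$. Alternatively, one can read it off the invariant half-line of Theorem~\ref{thm:kohlberg}, on which $f=f^{\sigma^*}=f^{\tau^*}$ for large $t$.
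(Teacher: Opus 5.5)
Your proposal is correct and takes the same route as the paper: apply the componentwise duality of Theorem~\ref{thm:duality}, then take the minimum over components. Your Steps~1 and~3 spell out what the paper's ``follows directly'' leaves implicit, namely the identification $\Phi_\tau=\min_i\chi_i(f^\tau)$, $\Phi^\sigma=\min_i\chi_i(f^\sigma)$, and the need for a single uniform $\sigma^*$ (from Theorem~\ref{thm:mpg_value}) to exchange $\min_i$ with $\max_\sigma$; on the $\tau$ side this is not needed, since $\min_\tau\min_i=\min_i\min_\tau$.
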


\begin{proof}
By Proposition~\ref{prop:biobjective_game}(i), for each fixed
$(\lambda_1,\lambda_2)\in\mathbb{R}^2$, the game value vector
$\chi(\overline{A}B(\lambda_1,\lambda_2))$ is uniquely determined.
By Proposition~\ref{prop:biobjective_game}(ii), the duality
\[
  \max_{\sigma\in\mathcal{S}}\chi(f^\sigma(\lambda_1,\lambda_2))
  = \chi(\overline{A}B(\lambda_1,\lambda_2))
  = \min_{\tau\in\mathcal{T}}\chi(f^\tau(\lambda_1,\lambda_2))
\]
holds. Taking the componentwise minimum over all $j\in[n+1]$ and
using $\Phi(\lambda_1,\lambda_2)=\min_j\chi_j(\overline{A}B(\lambda_1,
\lambda_2))$, $\Phi^\sigma(\lambda_1,\lambda_2)=\min_j\chi_j(f^\sigma)$,
and $\Phi_\tau(\lambda_1,\lambda_2)=\min_j\chi_j(f^\tau)$, the
equality~\eqref{eq:saddle} follows directly.
\end{proof}

\begin{proposition}
\label{prop:piecewise_linear}
For $A$ and $B(\lambda_1, \lambda_2)$ given by~\eqref{eq:AB_matrices}, 
the functions $\Phi(\lambda_1, \lambda_2)$, $\Phi_\tau(\lambda_1, \lambda_2)$ 
and $\Phi^\sigma(\lambda_1, \lambda_2)$ are componentwise non-decreasing,
piecewise linear, and continuous.
\end{proposition}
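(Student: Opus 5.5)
We will prove the three properties first for $\Phi^\sigma$, then transfer them to $\Phi$ using Proposition~\ref{prop:saddle}, and treat $\Phi_\tau$ by the dual argument.

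\textbf{The function $\Phi^\sigma$.} Fix $\sigma\in\mathcal{S}$. By Proposition~\ref{prop:biobjective_game}(iii), each component $\chi_j(f^\sigma(\lambda_1,\lambda_2))$ equals the minimum, over the finitely many elementary cycles $\gamma$ of $G^\sigma$ accessible from $j$, of the affine functions
\[
\frac{k_1(\gamma)\lambda_1+k_2(\gamma)\lambda_2+s(\gamma)}{|\gamma|}.
\]
Two facts about this expression need to be checked.
\begin{itemize}
\item The accessible set of cycles does not depend on $(\lambda_1,\lambda_2)$. The arc set of $G^\sigma$ is determined by the finiteness pattern of $A$ and $B_\sigma$, and the $\lambda$-entries are finite for every real $(\lambda_1,\lambda_2)$.
\item The coefficients satisfy $k_1(\gamma),k_2(\gamma)\ge 0$, by Lemma~\ref{lem:cyclemean}.
\end{itemize}
Hence each such affine function is componentwise non-decreasing. $\Phi^\sigma=\min_j\chi_j(f^\sigma)$ is then a minimum of finitely many non-decreasing affine functions. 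It is therefore concave, piecewise linear with finitely many pieces, continuous, and componentwise non-decreasing.

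\textbf{The function $\Phi_\tau$.} Fix $\tau\in\mathcal{T}$ and argue dually. Now $f^\tau$ is max-plus linear, and by \eqref{eq:cycletime_maxplus} $\chi_j(f^\tau)$ is the maximum, over cycles accessible from $j$ in $G^\tau$, of cycle means of the same affine form. Here Max chooses among the $\lambda$-arcs and data arcs. Each $\chi_j(f^\tau)$ is thus a max of finitely many non-decreasing affine functions, which is convex, piecewise linear and continuous. Taking $\min_j$ of these finitely many continuous piecewise-linear non-decreasing functions preserves all three properties. Convexity may be lost, but it is not claimed. Continuity and piecewise linearity are stable under finite min and max, since the pointwise max or min of finitely many continuous piecewise-affine functions is again of that kind (refine the polyhedral partitions). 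Monotonicity is preserved because min and max are monotone operations.

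\textbf{The function $\Phi$.} Since $\mathcal{S}$ is finite and independent of $(\lambda_1,\lambda_2)$, Proposition~\ref{prop:saddle} gives $\Phi=\max_{\sigma\in\mathcal{S}}\Phi^\sigma$. This is a finite max of continuous, piecewise-linear, componentwise non-decreasing functions, so $\Phi$ inherits all three properties. As a cross-check, $\Phi=\min_\tau\Phi_\tau$ gives the same conclusion.

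\textbf{Main obstacle.} The delicate point is justifying that the strategy sets and the combinatorial cycle structure are independent of the parameters. This is what makes the min/max finite and uniform over $\mathbb{R}^2$; it follows from Proposition~\ref{prop:assumptions}, since the $\lambda$-entries are always finite. A second point needing care is the nonnegativity of $k_1,k_2$: these are counts of arcs whose weight is $+\lambda_k$ in $B$, and $\lambda$ never appears in $A$, so no cycle weight has a $-\lambda_k$ term.
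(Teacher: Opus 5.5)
Your proposal is correct and follows essentially the same route as the paper. Both arguments write each $\chi_j(f^\sigma)$ as a finite minimum of affine cycle-mean functions with nonnegative $\lambda$-coefficients (Proposition~\ref{prop:biobjective_game}(iii)), take $\min_j$ to get $\Phi^\sigma$, pass to $\Phi=\max_\sigma\Phi^\sigma$ via Proposition~\ref{prop:saddle}, and treat $\Phi_\tau$ dually. Your explicit max-over-cycles treatment of $\Phi_\tau$, and your remark that the arc sets (hence $\mathcal{S}$, $\mathcal{T}$ and the cycle families) do not depend on $(\lambda_1,\lambda_2)$, simply spell out what the paper leaves as ``the same argument.''
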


\begin{proof}
By Proposition~\ref{prop:biobjective_game}(iii), for any fixed strategy
$\sigma$ of Max, the function
$(\lambda_1,\lambda_2)\mapsto\chi_j(f^\sigma(\lambda_1,\lambda_2))$
is a minimum of finitely many linear functions of $(\lambda_1,\lambda_2)$,
hence piecewise linear, continuous, and componentwise non-decreasing.
Since $\Phi^\sigma(\lambda_1,\lambda_2)=\min_j\chi_j(f^\sigma(\lambda_1,\lambda_2))$
is a minimum of finitely many such functions over $j\in[n+1]$, it
inherits all three properties. By Proposition~\ref{prop:saddle},
$\Phi = \max_\sigma \Phi^\sigma$, which is a maximum of piecewise
linear componentwise non-decreasing continuous functions, preserving
all three properties. The same argument applies to $\Phi_\tau$ using
the symmetric statement for Min strategies.
\end{proof}

\begin{proposition}
\label{prop:reformulation}
For $A$ and $B(\lambda_1, \lambda_2)$ given by~\eqref{eq:AB_matrices},
$\Phi(\lambda_1, \lambda_2) \geq 0$ if and only if there exists a finite 
$z \in \mathbb{R}^{n+1}$ satisfying~\eqref{eq:two_sided}.
\end{proposition}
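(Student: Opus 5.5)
The plan is to reduce the statement to Proposition~\ref{prop:solvability}, applied to the fixed matrices $A$ and $B(\lambda_1,\lambda_2)$. That proposition only gives a solution $z$ with a prescribed single coordinate finite. The real content is therefore to pass from ``every coordinate can individually be made finite'' to ``one solution with all coordinates finite'', and to handle the converse direction.

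For the direction ($\Leftarrow$), suppose a finite $z\in\mathbb{R}^{n+1}$ satisfies $A\otimes z\leq B(\lambda_1,\lambda_2)\otimes z$. Then for every $j\in[n+1]$ this $z$ witnesses the existence of a solution with $z_j\neq-\infty$. By Proposition~\ref{prop:solvability}, $\chi_j(\overline{A}B(\lambda_1,\lambda_2))\geq 0$ for each $j$. Taking the minimum over $j$ gives $\Phi(\lambda_1,\lambda_2)\geq 0$ by the definition~\eqref{eq:phi}. The hypotheses of Proposition~\ref{prop:solvability} (Assumptions~\ref{ass:max}--\ref{ass:min}) hold by Proposition~\ref{prop:assumptions}.

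For the direction ($\Rightarrow$), assume $\Phi\geq 0$, so $\chi_j\geq 0$ for all $j$. I would use either of two routes.

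\textbf{Route 1.} Proposition~\ref{prop:solvability} gives, for each $j$, a solution $z^{(j)}\in\Rmax^{n+1}$ with $z^{(j)}_j$ finite. Set $z=\bigoplus_j z^{(j)}$. The solution set of a two-sided system $A\otimes z\leq B\otimes z$ is closed under tropical addition, since
\[
A\otimes(z\oplus w)=A\otimes z\oplus A\otimes w\leq B\otimes z\oplus B\otimes w=B\otimes(z\oplus w).
\]
Every coordinate of $z$ is then finite, because $z_j\geq z^{(j)}_j>-\infty$ and no entry is $+\infty$.

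\textbf{Route 2.} Use directly the construction recalled after Proposition~\ref{prop:solvability}. Take the invariant half-line $v+t\chi$ of Theorem~\ref{thm:kohlberg}. When all $\chi_i\geq 0$, set $z=v+t\chi$ for large $t$; this vector is finite in every coordinate.

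I would present Route 1 as the main argument, since it only uses the stated proposition plus distributivity of $\otimes$ over $\oplus$ and monotonicity of $\oplus$.

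I expect the main obstacle to be the finiteness bookkeeping in Route 1. One must check that $A$ and $B$ have no $+\infty$ entries, so that products with vectors in $\Rmax^{n+1}$ are well defined and the residuation and monotonicity steps are valid. The entries $q_k^-$ could be $-\infty$ when $q_{kj}=+\infty$, and that is harmless. One must also check that the max of finitely many vectors in $\Rmax^{n+1}$, each having at least its own coordinate finite, is finite everywhere. Both checks are immediate from the structure of~\eqref{eq:AB_matrices}.
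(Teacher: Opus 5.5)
Your argument is correct and follows the same route as the paper, whose proof is the one-line reduction to Proposition~\ref{prop:solvability} applied to $A$ and $B(\lambda_1,\lambda_2)$. You also supply the step that citation leaves implicit: Proposition~\ref{prop:solvability} only makes one prescribed coordinate finite, and your closure of the solution set under $\oplus$ (Route 1), or equally the invariant half-line with $\chi\geq 0$ (Route 2), turns the coordinatewise solutions $z^{(j)}$ into a single $z\in\mathbb{R}^{n+1}$.
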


\begin{proof}
This is a direct application of Proposition~\ref{prop:solvability} to 
the system $A \otimes z \leq B(\lambda_1, \lambda_2) \otimes z$.
\end{proof}

The structure of this mean-payoff game is illustrated in
Figure~\ref{fig:mpg_general}.

\begin{figure}[ht]
\centering
\begin{tikzpicture}[
  minnode/.style={circle,draw=black,fill=white,
    minimum size=0.70cm,inner sep=2pt,thick,font=\small},
  lam1node/.style={rectangle,draw=blue!70!black,fill=blue!8,
    minimum size=0.70cm,inner sep=2pt,thick,font=\small},
  lam2node/.style={rectangle,draw=red!70!black,fill=red!8,
    minimum size=0.70cm,inner sep=2pt,thick,font=\small},
  consnode/.style={rectangle,draw=gray!60!black,fill=gray!8,
    minimum size=0.70cm,inner sep=2pt,thick,font=\small},
  ->,>=Stealth,thick
]
\node[minnode](v1) at (0, 2.8){$v_1$};
\node[minnode](v2) at (0, 1.4){$v_2$};
\node[font=\small]  at (0, 0.6){$\vdots$};
\node[minnode](vn) at (0,-0.2){$v_n$};
\node[minnode](v0) at (0,-2.0){$v_0$};
\node[consnode](c1a) at (-3.6, 2.8){};
\node[consnode](c1b) at (-3.6, 1.4){};
\node[font=\small]   at (-3.6, 0.6){$\vdots$};
\node[consnode](c1m) at (-3.6,-0.2){};
\draw[decorate,decoration={brace,amplitude=5pt},thick]
  (-4.2,-0.6)--(-4.2,3.2) node[midway,left=7pt,font=\small]{$C_1$};
\node[lam1node](c2a) at (3.6, 3.6){};
\node[lam1node](c2b) at (3.6, 2.2){};
\node[font=\small]   at (3.6, 1.4){$\vdots$};
\node[lam1node](c2n) at (3.6, 0.6){};
\draw[decorate,decoration={brace,amplitude=5pt,mirror},thick]
  (4.2,0.2)--(4.2,4.0) node[midway,right=7pt,font=\small]{$C_2$\;($\lambda_1$)};
\node[lam1node](c3) at (3.6,-0.2){$C_3$};
\node[right=0.08cm of c3,font=\scriptsize,blue!80!black]{$(\lambda_1)$};
\node[lam2node](c4a) at (3.6,-1.2){};
\node[lam2node](c4b) at (3.6,-2.3){};
\node[font=\small]   at (3.6,-3.0){$\vdots$};
\node[lam2node](c4n) at (3.6,-3.7){};
\draw[decorate,decoration={brace,amplitude=5pt,mirror},thick]
  (4.2,-4.1)--(4.2,-0.8) node[midway,right=7pt,font=\small]{$C_4$\;($\lambda_2$)};
\node[lam2node](c5) at (3.6,-4.9){$C_5$};
\node[right=0.08cm of c5,font=\scriptsize,red!80!black]{$(\lambda_2)$};
\draw[->](v1)--(c1a) node[midway,above,font=\scriptsize]{$U$};
\draw[->](v2)--(c1b); \draw[->](vn)--(c1m);
\draw[->](v0) to[out=180,in=260](c1m)
  node[near start,below,font=\scriptsize]{$b$};
\draw[->](c1a) to[out=0,in=155](v1)
  node[midway,above,font=\scriptsize]{$V$};
\draw[->](c1b) to[out=0,in=155](v2); \draw[->](c1m) to[out=0,in=205](vn);
\draw[->](c1b) to[out=260,in=145](v0)
  node[near end,left,font=\scriptsize]{$d$};
\draw[->](v1) to[out=25,in=175](c2a)
  node[midway,above,font=\scriptsize]{$p_1$};
\draw[->](v2) to[out=25,in=175](c2b);
\draw[->](vn) to[out=25,in=175](c2n);
\draw[->,dashed,blue!80!black,thick](c2a) to[out=200,in=20](v1)
  node[midway,above,font=\scriptsize,blue!80!black]{$\lambda_1$};
\draw[->,dashed,blue!80!black,thick](c2b) to[out=200,in=20](v2);
\draw[->,dashed,blue!80!black,thick](c2n) to[out=200,in=20](vn);
\draw[->](v1) to[out=335,in=95](c3)
  node[near start,right,font=\scriptsize]{$q_1^-$};
\draw[->](vn) to[out=335,in=95](c3);
\draw[->,dashed,blue!80!black,thick](c3) to[out=255,in=15](v0)
  node[midway,right,font=\scriptsize,blue!80!black]{$\lambda_1$};
\draw[->](v1) to[out=335,in=155](c4a)
  node[near start,font=\scriptsize]{$p_2$};
\draw[->](v2) to[out=335,in=155](c4b);
\draw[->](vn) to[out=335,in=155](c4n);
\draw[->,dashed,red!80!black,thick](c4a) to[out=185,in=345](v1)
  node[midway,below,font=\scriptsize,red!80!black]{$\lambda_2$};
\draw[->,dashed,red!80!black,thick](c4b) to[out=185,in=345](v2);
\draw[->,dashed,red!80!black,thick](c4n) to[out=185,in=345](vn);
\draw[->](v2) to[out=305,in=95](c5)
  node[near start,font=\scriptsize]{$q_2^-$};
\draw[->](vn) to[out=305,in=95](c5);
\draw[->,dashed,red!80!black,thick](c5) to[out=180,in=315](v0)
  node[midway,below,font=\scriptsize,red!80!black]{$\lambda_2$};
\begin{scope}[on background layer]
  \node[fill=gray!10,rounded corners=7pt,fit=(c1a)(c1m),inner sep=6pt]{};
  \node[fill=blue!7,rounded corners=7pt,fit=(c2a)(c2n)(c3),inner sep=6pt]{};
  \node[fill=red!7,rounded corners=7pt,fit=(c4a)(c4n)(c5),inner sep=6pt]{};
  \node[fill=green!7,rounded corners=7pt,fit=(v1)(vn)(v0),inner sep=6pt]{};
\end{scope}
\node[font=\small,blue!80!black] at (3.6, 4.8)
  {$\lambda_1$ group ($C_2\cup C_3$)};
\node[font=\small,red!80!black]  at (3.6,-5.7)
  {$\lambda_2$ group ($C_4\cup C_5$)};
\node[font=\small,gray!70!black] at (-3.6,-1.3){constraint group $C_1$};
\node[font=\small]               at (0,-3.1){Min nodes};
\end{tikzpicture}
\caption{General parametric MPG for the tropical bi-objective
pseudolinear optimization problem. Circles: Min nodes
($v_1,\ldots,v_n$ and free-standing $v_0$). Squares: Max nodes.
The $\lambda_1$ group ($C_2\cup C_3$, blue) and $\lambda_2$ group
($C_4\cup C_5$, red) are disjoint and connect only through the Min
nodes. Min-to-Max arc weights are $-a_{ij}$ (negatives of $A$ entries);
Max-to-Min arc weights are $b_{ij}$ (entries of $B(\lambda_1,\lambda_2)$).
Dashed arcs carry parameter weights $\lambda_1$ (blue) or $\lambda_2$ (red).
The disjointness of the two parameter groups is the
structural property underlying Theorem~\ref{thm:pareto_structure}(iii).}
\label{fig:mpg_general}
\end{figure}

\section{Structure of the Feasibility Region and Pareto Front}
\label{sec:structure}

We now establish the main structural results of this paper.

\begin{lemma}
\label{lem:k_bound}
\label{lem:k1k2_bound}
For any elementary cycle $\gamma$: $k_1(\gamma) \leq 2$,
$k_2(\gamma) \leq 2$, and $k_1(\gamma) + k_2(\gamma) \leq 2$.
\end{lemma}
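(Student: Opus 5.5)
The plan is to read off, directly from the block structure of $A$ and $B(\lambda_1,\lambda_2)$ in~\eqref{eq:AB_matrices}, which arcs of the bipartite game digraph touch each Max node in the four $\lambda$-carrying groups. The bound should then follow from the fact that an elementary cycle visits the homogenising Min node $v_0$ (column $t$) at most once. No cycle-mean computation or game-theoretic result is needed; this is a purely combinatorial statement about the arc set.

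\textbf{Step 1: arcs of the groups $C_2$ and $C_4$.} Consider a Max node in $C_2$ (a $p_1$ row). Its $A$-row is $(-\infty \mid p_{1i})$, so its only possible incoming Min-to-Max arc comes from $v_0$, with weight $-p_{1i}$. Its $B$-row is $(\lambda_1 e_i \mid -\infty)$, so its unique outgoing arc goes to $v_i$ and carries weight $\lambda_1$. The same holds for $C_4$, with $p_2$ and $\lambda_2$ in place of $p_1$ and $\lambda_1$. Hence every $\lambda$-arc leaving a $C_2\cup C_4$ node is immediately preceded in any cycle by an arc of the form $v_0\to(\text{that node})$.

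\textbf{Step 2: arcs of the groups $C_3$ and $C_5$.} The $q_1$ row has $A$-row $(q_1^- \mid -\infty)$, so it is entered only from nodes $v_j$ with $j\ge 1$. Its $B$-row is $(-\infty \mid \lambda_1)$, so its unique outgoing arc goes to $v_0$ and carries weight $\lambda_1$. The same holds for $C_5$ with $q_2$ and $\lambda_2$. Hence every $\lambda$-arc leaving a $C_3\cup C_5$ node ends at $v_0$.

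\textbf{Step 3: counting.} Let $\gamma$ be an elementary cycle. Since $\gamma$ passes through $v_0$ at most once, it contains at most one arc leaving $v_0$ and at most one arc entering $v_0$.
\begin{itemize}
\item By Step~1, every $\lambda$-arc coming from $C_2\cup C_4$ forces a distinct arc out of $v_0$ into a distinct Max node. So $\gamma$ uses at most one such $\lambda$-arc.
\item By Step~2, every $\lambda$-arc coming from $C_3\cup C_5$ is an arc into $v_0$. So $\gamma$ uses at most one such $\lambda$-arc.
\end{itemize}
Therefore $k_1(\gamma)+k_2(\gamma)\le 2$. The individual bounds $k_1(\gamma)\le 2$ and $k_2(\gamma)\le 2$ follow immediately since both counts are non-negative. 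If $\gamma$ avoids $v_0$ entirely, the same argument gives $k_1(\gamma)=k_2(\gamma)=0$.

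\textbf{Main obstacle.} The only delicate point is reading the block matrices correctly. In particular, the $p_k$ row groups of $A$ have their finite entries in the $t$-column, not on the diagonal. The proof hinges on both families of $\lambda$-arcs being funnelled through the single node $v_0$ — one family on the way out of $v_0$, the other on the way in. I would double-check this against Figure~\ref{fig:mpg_general} and Example~\ref{ex:cyclemean}. The cycle there, $v_0\to L_1^{(1)}\to v_1\to L_2^{(0)}\to v_0$, attains the bound $k_1+k_2=2$ and shows that the lemma is sharp.
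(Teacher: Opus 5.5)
Your proof is correct and rests on the same structural facts as the paper's: $C_2\cup C_4$ nodes are entered only from $v_0$, $C_3\cup C_5$ nodes leave only to $v_0$, and $v_0$ occurs at most once in an elementary cycle. The difference is only in the bookkeeping. You split the $\lambda$-arcs according to whether they sit just after or just before $v_0$, which gives $k_1(\gamma)+k_2(\gamma)\le 2$ directly. The paper instead bounds $k_1$ and $k_2$ separately (using that $C_3$ is a single node), then shows that $k_1(\gamma)=2$ forces $k_2(\gamma)=0$, and likewise with the roles swapped.
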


\begin{proof}
The arc structure used below is established by the matrix
construction in~\eqref{eq:AB_matrices}, every $C_2$-node $L_{1,i}$
has incoming arcs only from Min node $v_0$ (since the $p_1$-rows of
$A$ have finite entries only in the $t$-column, i.e.\ column $v_0$),
and every $C_4$-node $L_{2,i}$ similarly has incoming arcs only from
$v_0$; every $C_3$-node has its unique outgoing arc to $v_0$, and
every $C_5$-node likewise.

Since every $C_2$-node has incoming arcs only from $v_0$, visiting any
$C_2$-node requires $v_0$ as its immediate predecessor. Since $v_0$
appears at most once in an elementary cycle, \emph{at most one}
$C_2$-node can appear. Together with $C_3$ appearing at most once,
$k_1(\gamma) \leq 2$. By symmetry $k_2(\gamma) \leq 2$.

For the joint bound, suppose $k_1(\gamma) = 2$: both some $C_2$-node
$L_{1,j}$ and $C_3$ appear. Since $L_{1,j}$ requires $v_0$ as
predecessor and $C_3$ has outgoing arc only to $v_0$, the cycle
contains the sub-path $v_0 \to L_{1,j} \to v_j \to \cdots \to v_k
\to C_3 \to v_0$, so $v_0$ is visited exactly once, serving as both
the predecessor of $L_{1,j}$ and the successor of $C_3$.
Any $C_4$-node also requires $v_0$ as immediate predecessor; adding it
would require $v_0$ to appear a second time, contradicting elementarity.
Any $C_5$-node has outgoing arc only to $v_0$; adding it would also
create a second visit to $v_0$. Hence $k_2(\gamma) = 0$ when
$k_1(\gamma) = 2$. By symmetry $k_1(\gamma) = 0$ when $k_2(\gamma)=2$.
Therefore $k_1(\gamma) + k_2(\gamma) \leq 2$.
\end{proof}

\begin{theorem}
\label{thm:pareto_structure}
(Structure of the Pareto Front).
Consider the tropical bi-objective pseudolinear optimization 
problem~\eqref{eq:biobjective}--\eqref{eq:constraint}. 
Let $\calR$ and $\calP$ be the feasibility region and Pareto front 
defined in~\eqref{eq:feasibility_region}--\eqref{eq:pareto_front}. Then:

\begin{enumerate}
  \item[\emph{(i)}] $\calR$ is a convex subset of $\mathbb{R}^2$.
  
  \item[\emph{(ii)}] $\calP$ is a convex piecewise-linear curve in 
  $\mathbb{R}^2$ with finitely many breakpoints.
  
  \item[\emph{(iii)}] When the finite entries of 
  $p_1, p_2, q_1, q_2, U, V, b, d$ are all integers, every breakpoint 
  of $\calP$ has coordinates that are integer multiples of $\tfrac{1}{2}$.
\end{enumerate}
\end{theorem}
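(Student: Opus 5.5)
The plan is to work from the strategy decomposition of Proposition~\ref{prop:saddle} together with the explicit cycle-mean formula of Lemma~\ref{lem:cyclemean}. By Proposition~\ref{prop:biobjective_game}(iv), $\calR = \{(\lambda_1,\lambda_2) : \Phi(\lambda_1,\lambda_2)\ge 0\}$. By Proposition~\ref{prop:saddle}, $\Phi=\min_{\tau\in\mathcal{T}}\Phi_\tau$, so
\[
\calR=\bigcap_{\tau\in\mathcal{T}}\{\Phi_\tau\ge 0\}.
\]
For a fixed Min strategy $\tau$, the one-player game is a max-plus game. Hence each component $\chi_j(f^\tau)$ is the \emph{maximal} cycle mean accessible from $j$, by~\eqref{eq:cycletime_maxplus}, and each such mean is the affine function of Lemma~\ref{lem:cyclemean}.

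\textbf{Step (i).} A component $\chi_j(f^\tau)$ is nonnegative iff some cycle accessible from $j$ has $k_1\lambda_1+k_2\lambda_2+s\ge 0$, which is a union of halfplanes and is not obviously convex. So rather than pass through $\Phi_\tau$, I would prove convexity of $\calR$ directly from the definition~\eqref{eq:feasibility_region}. Take feasible finite $x,y$ with $f_k(x)\le\lambda_k$ and $f_k(y)\le\mu_k$, and fix $\theta\in[0,1]$.
\begin{itemize}
\item The feasible set of~\eqref{eq:constraint} is a tropical polyhedron, hence max-plus convex. It is \emph{not} ordinary convex, so the usual convex combination is unavailable.
\item Instead, use that each constraint row and each $f_k$ is a max of terms $x_j-x_l+c$ or $\pm x_j+c$ once homogenised by $t$. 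In the homogenised variable $z=(x,t)$, the condition $(\lambda_1,\lambda_2)\in\calR$ is feasibility of $A\otimes z\le B(\lambda_1,\lambda_2)\otimes z$.
\item I would then show that the set of $(\lambda,z)$ satisfying this system is closed under a suitable combination. Concretely, combining a $\lambda$-solution and a $\mu$-solution with weights $\theta$ and $1-\theta$ should yield a solution at $\theta\lambda+(1-\theta)\mu$.
\end{itemize}
If a direct combination argument fails, the fallback is to use monotonicity (Proposition~\ref{prop:piecewise_linear}): $\calR$ is an up-set. I would then argue that its lower boundary is governed by the halfspaces $k_1\lambda_1+k_2\lambda_2\ge -s$ attached to the cycles that Max can enforce, i.e.\ that $\calR$ is a finite intersection of such halfspaces, following the halfspace-representation remark after Example~\ref{ex:cyclemean}. \emph{This is the main obstacle:} making rigorous that the min over $\tau$ of a max over accessible cycles reduces to an intersection of halfplanes, rather than a union.

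\textbf{Step (ii).} Once $\calR$ is a closed convex up-set and a finite intersection of halfplanes, finiteness follows from the fact that $\mathcal{T}$ and the set of elementary cycles are finite. Then $\calP$, its lower-left boundary, is the graph of a convex, nonincreasing, piecewise-linear function $\lambda_2=g(\lambda_1)$. Its breakpoints are intersections of two boundary lines, and there are finitely many such lines. The relevant lines have $k_1,k_2\ge0$ and are not both zero; lines with $k_1=k_2=0$ give a parameter-free condition that holds by Assumption~\ref{ass:wellposed}.

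\textbf{Step (iii).} A breakpoint solves a $2\times2$ system
\[
k_1\lambda_1+k_2\lambda_2=-s,\qquad k_1'\lambda_1+k_2'\lambda_2=-s',
\]
with integer $s,s'$ and $k_i,k_i'\in\{0,1,2\}$, where $k_1+k_2\le2$ by Lemma~\ref{lem:k1k2_bound}. By Cramer's rule the coordinates have denominator $\Delta=k_1k_2'-k_2k_1'$. Enumerating the pairs allowed by Lemma~\ref{lem:k1k2_bound}, the possible vectors are $(1,0)$, $(0,1)$, $(1,1)$, $(2,0)$ and $(0,2)$, so $|\Delta|\in\{1,2,4\}$.
\begin{itemize}
\item $|\Delta|=4$ arises only from the decoupled pair $(2,0),(0,2)$.
\item In that case the lines are $\lambda_1=-s/2$ and $\lambda_2=-s'/2$, so the coordinates are still half-integers.
\item In every other case $|\Delta|\le2$, and the numerators $-sk_2'+s'k_2$ and similar are integers, so the coordinates are in $\tfrac12\mathbb{Z}$.
\end{itemize}
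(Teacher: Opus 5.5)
Your Step (i) is a genuine gap, and it cannot be filled. The obstacle you flagged is real: claim (i), and with it the word ``convex'' in (ii), is false in general. Your diagnosis is right. For a fixed Min strategy $\tau$ the reduced game is max-plus, so $\{\Phi_\tau\ge 0\}=\bigcap_j\bigcup_\gamma\{k_1(\gamma)\lambda_1+k_2(\gamma)\lambda_2+s(\gamma)\ge 0\}$, with $\gamma$ ranging over cycles accessible from $j$. The paper's proof of (i) takes exactly the step you declined: it asserts that $\Phi_\tau\ge 0$ means \emph{every} cycle of the $\tau$-game has non-negative mean. That is the min-plus description, which belongs to a fixed Max strategy $\sigma$, and there $\Phi=\max_\sigma\Phi^\sigma$ makes $\calR$ a \emph{union} of polyhedra.

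Here is a concrete counterexample. Take $n=2$, $m=1$, $U=(-1\ \ -1)$, $b=0$, $V=(0\ \ 0)$, $d=-1$, so the constraint is $\max(x_1,x_2)\ge 0$. Take $p_1=p_2=(-2,-2)^\top$, $q_1=(0,1)^\top$, $q_2=(1,0)^\top$, so that $f_1=\max(-2-x_1,-2-x_2,x_1,x_2-1)$ and $f_2=\max(-2-x_1,-2-x_2,x_1-1,x_2)$. The data are integer and satisfy Proposition~\ref{prop:assumptions} and Assumption~\ref{ass:wellposed}.
The feasible points $x=(-1,0)$ and $x=(0,-1)$ give $(-1,0),(0,-1)\in\calR$. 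However, $f_1(x)\le-\tfrac12$ and $f_2(x)\le-\tfrac12$ force $x_1,x_2\le-\tfrac12$, which violates the constraint, so $(-\tfrac12,-\tfrac12)\notin\calR$. In fact $\calR=\{\lambda_1\ge-1,\ \lambda_2\ge 0\}\cup\{\lambda_1\ge 0,\ \lambda_2\ge-1\}$, whose lower boundary has a reflex corner at $(0,0)$.

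So neither your combination argument nor the halfspace fallback can succeed in general. Your combination route does work if every row of $(V\ d)$ has exactly one finite entry (difference and bound constraints, as in the scheduling model). Then the feasible set is an ordinary polyhedron, and $\calR$ is the linear projection of the convex set $\{(x,\lambda_1,\lambda_2): x \text{ feasible},\ f_k(x)\le\lambda_k\}$.

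What survives in general is weaker than what your Step (ii) assumes. By your own formula, $\calR=\bigcap_\tau\{\Phi_\tau\ge 0\}$ is a finite intersection of finite unions of closed halfplanes $\{k_1\lambda_1+k_2\lambda_2+s\ge 0\}$; those with $(k_1,k_2)=(0,0)$ are $\emptyset$ or $\mathbb{R}^2$. Hence $\calP$, its lower boundary, lies on finitely many lines and is piecewise linear with finitely many breakpoints, but it need not be convex. Step (ii) should be rebuilt on this description rather than on ``$\calR$ is a convex finite intersection of halfplanes''.

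Your Step (iii) is the same argument as the paper's: normals restricted by Lemma~\ref{lem:k1k2_bound}, Cramer's rule with $|\Delta|\le 2$, and the decoupled pair $(2,0),(0,2)$ handled separately. It remains valid, because it only uses that each breakpoint lies on two non-parallel such lines with integer offsets, not convexity.
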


\begin{proof}
\textbf{(i) Convexity of $\calR$.}

By Proposition~\ref{prop:reformulation}, 
$\calR = \{\Phi(\lambda_1, \lambda_2) \geq 0\}$.
We show that $\calR$ is an intersection of halfspaces in $\mathbb{R}^2$,
hence convex.

By Proposition~\ref{prop:saddle},
\[
  \Phi(\lambda_1, \lambda_2) \geq 0
  \iff
  \min_{\tau \in \mathcal{T}} \Phi_\tau(\lambda_1,\lambda_2) \geq 0
  \iff
  \Phi_\tau(\lambda_1,\lambda_2) \geq 0 \quad \text{for all } \tau \in \mathcal{T}.
\]
Fix any strategy $\tau$ of Min. The condition 
$\Phi_\tau(\lambda_1,\lambda_2) \geq 0$ means that every cycle $\gamma$ 
in the reduced game defined by $A_\tau$ and $B(\lambda_1,\lambda_2)$ 
has non-negative mean weight. By Lemma~\ref{lem:cyclemean}, the mean 
weight of $\gamma$ is
\[
  \mu(\gamma,\lambda_1,\lambda_2)
  = \frac{k_1(\gamma)\,\lambda_1 + k_2(\gamma)\,\lambda_2 + s(\gamma)}{|\gamma|},
\]
where $k_1(\gamma), k_2(\gamma) \geq 0$ count the $\lambda_1$-arcs and
$\lambda_2$-arcs in $\gamma$, and $s(\gamma) \in \mathbb{R}$ is
determined entirely by the data. The non-negativity condition 
$\mu(\gamma,\lambda_1,\lambda_2) \geq 0$ is therefore the closed halfspace
\[
  k_1(\gamma)\,\lambda_1 + k_2(\gamma)\,\lambda_2 \geq -s(\gamma).
\]
Since this must hold for every cycle $\gamma$ and every $\tau \in \mathcal{T}$:
\[
  \calR 
  = \bigcap_{\tau \in \mathcal{T}}\; \bigcap_{\gamma} 
    \Bigl\{(\lambda_1,\lambda_2) \in \mathbb{R}^2 : 
    k_1(\gamma)\,\lambda_1 + k_2(\gamma)\,\lambda_2 \geq -s(\gamma) \Bigr\}.
\]
This is an intersection of closed halfspaces, hence convex.

\medskip
\textbf{(ii) Piecewise linearity and finiteness of breakpoints.}

By Proposition~\ref{prop:piecewise_linear}, the functions $\Phi_\tau$ 
are piecewise linear and continuous for each $\tau \in \mathcal{T}$, 
and $\Phi = \min_\tau \Phi_\tau$ is their minimum. Since $\mathcal{T}$ 
is finite, $\Phi$ is itself piecewise linear and continuous. The 
Pareto front $\calP$, being the lower boundary of $\calR$ where 
$\Phi(\lambda_1,\lambda_2) = 0$, is therefore piecewise linear. 
The number of pieces is bounded by the number of distinct linear pieces 
across all $\Phi_\tau$, which is finite since $|\mathcal{T}|$ is finite. 
Hence $\calP$ has finitely many breakpoints. 

\medskip
\textbf{(iii) Denominator bound for integer data.}

We analyse the arc structure of the matrices $A$ and
$B(\lambda_1,\lambda_2)$ in~\eqref{eq:AB_matrices} to obtain a tight
bound on $k_1(\gamma)$ and $k_2(\gamma)$.

\medskip
\noindent\textit{Outgoing arcs} (from $B$): Each $C_2$-node $L_{1,i}$
has unique outgoing arc $L_{1,i} \to v_i$ (weight $\lambda_1$,
from the $\lambda_1 I$ block); $C_3 = L_{1,0}$ has unique outgoing arc
$L_{1,0} \to v_0$ (weight $\lambda_1$, from the scalar $\lambda_1$ entry).
Each $C_4$-node $L_{2,i}$ has unique outgoing arc $L_{2,i} \to v_i$
(weight $\lambda_2$); $C_5 = L_{2,0}$ has unique outgoing arc
$L_{2,0} \to v_0$ (weight $\lambda_2$).

\medskip
\noindent\textit{Incoming arcs} (from $A$): The $p_1$-rows of $A$ have
$-\infty$ in columns $1,\ldots,n$ and a finite entry only in column $n+1$
(corresponding to $v_0$). Thus every $C_2$-node $L_{1,i}$ has incoming
arcs \emph{only from $v_0$}. The $q_1^-$-row of $A$ has finite entries in
columns $1,\ldots,n$ and $-\infty$ in column $n+1$, so $C_3$ has incoming
arcs \emph{only from $v_1,\ldots,v_n$}. Symmetrically, every $C_4$-node
has incoming arcs only from $v_0$, and $C_5$ has incoming arcs only from
$v_1,\ldots,v_n$.

By Lemma~\ref{lem:k_bound}, the achievable pairs
$(k_1(\gamma), k_2(\gamma))$ lie in
$\{(0,0),(1,0),(0,1),(1,1),(2,0),(0,2)\}$.

A breakpoint of $\calP$ occurs where two strategy-restricted boundary
equations of Max simultaneously equal zero:
\begin{equation}
\label{eq:breakpoint_system}
  k_1^{(1)} \lambda_1 + k_2^{(1)} \lambda_2 + s^{(1)} = 0, \qquad
  k_1^{(2)} \lambda_1 + k_2^{(2)} \lambda_2 + s^{(2)} = 0,
\end{equation}
with $(k_1^{(r)}, k_2^{(r)}) \in \{(0,0),(1,0),(0,1),(1,1),(2,0),(0,2)\}$
and $s^{(r)} \in \mathbb{Z}$.
The determinant $\Delta = k_1^{(1)} k_2^{(2)} - k_1^{(2)} k_2^{(1)}$
is non-zero (the two pieces have distinct slopes).
We consider two cases.

\medskip
\noindent\textbf{Case~1:} $(k_1^{(1)}, k_2^{(1)}) = (2,0)$ and 
$(k_1^{(2)}, k_2^{(2)}) = (0,2)$, or vice versa. Here $|\Delta| = 4$, but the 
system~\eqref{eq:breakpoint_system} decouples: $2\lambda_1 = -s^{(1)}$ and 
$2\lambda_2 = -s^{(2)}$, giving $\lambda_1 = -s^{(1)}/2$ and 
$\lambda_2 = -s^{(2)}/2$ directly. Both coordinates are integer multiples 
of~$\tfrac{1}{2}$.

\medskip
\noindent\textbf{Case~2:} All remaining pairs with $\Delta \neq 0$. 
Direct enumeration over all such pairs from the set above gives 
$|\Delta| \in \{1, 2\}$. By Cramer's rule, each breakpoint coordinate 
has denominator dividing $|\Delta| \leq 2$, so again every coordinate 
is an integer multiple of~$\tfrac{1}{2}$.

\medskip
\noindent In both cases every breakpoint coordinate is an integer 
multiple of~$\tfrac{1}{2}$. 
\end{proof}


\begin{corollary}
\label{cor:solution_map}
\textbf{(Optimal Solution Map).}
For each $(\lambda_1, \lambda_2) \in \calP$, the set of optimal solutions 
$x^*(\lambda_1, \lambda_2)$ is non-empty and varies piecewise linearly 
along $\calP$.
\end{corollary}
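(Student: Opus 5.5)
The plan has two parts: non-emptiness of the solution set, then its piecewise-linear dependence along $\calP$.

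\textbf{Non-emptiness.} Fix $(\lambda_1,\lambda_2)\in\calP$.

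First, $\calP\subseteq\calR$ by definition~\eqref{eq:pareto_front}. So by Proposition~\ref{prop:biobjective_game}(iv), together with Proposition~\ref{prop:reformulation}, we have $\Phi(\lambda_1,\lambda_2)\ge 0$. Hence the system $A\otimes z\le B(\lambda_1,\lambda_2)\otimes z$ has a finite solution $z=(x,t)$.

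Second, I would normalise $t=0$, using additive homogeneity of the system. With $t=0$, the construction in Section~\ref{sec:parameterisation} shows that $x$ is feasible for~\eqref{eq:constraint} and satisfies $f_k(x)\le\lambda_k$.

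Third, I need equality $f_k(x)=\lambda_k$. Suppose $f_1(x)<\lambda_1$. Then $(f_1(x),\lambda_2)$ lies in $\calR$ and is weakly dominated. I would argue that such points cannot lie on $\calP$ except at the endpoints of the front, using the definition of $\calP$ together with the convexity from Theorem~\ref{thm:pareto_structure}(i).

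I flag a subtlety here. The definition of $\calP$ uses strict domination in both coordinates, so "optimal" must be read as "attains the pair $(\lambda_1,\lambda_2)$ as an upper bound". I would define $x^*(\lambda_1,\lambda_2)=\{x \text{ feasible}: f_k(x)\le \lambda_k,\ k=1,2\}$. This set is exactly the projection of the solution set of~\eqref{eq:two_sided} at $t=0$, and it is non-empty by the argument above.

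\textbf{Piecewise-linear variation.}

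First, parametrise $\calP$ by arc length, or by $\lambda_1$ on each linear piece. By Theorem~\ref{thm:pareto_structure}(ii) there are finitely many pieces.

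Second, on each piece I would take an optimal Max strategy $\sigma^*$ and the invariant half-line $v+t\chi$ of Theorem~\ref{thm:kohlberg}, and apply the construction behind Proposition~\ref{prop:solvability}. On a fixed piece the active cycles are fixed, and the parameters enter linearly through the arcs $\lambda_1 I$ and $\lambda_2 I$. So a solution can be obtained by solving the tight constraints, a system of linear equalities in $(x,\lambda)$ determined by $\sigma^*$ and the active cycle. This solution is affine in the position along the piece.

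Third, I would glue the pieces at the breakpoints. Given the first step of non-emptiness, the solution set is a closed tropical polyhedron depending monotonically on $\lambda$, which allows the affine selections to be chosen continuously at each breakpoint.

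\textbf{Main obstacle.} The hard step is the second step of the piecewise-linear part: showing that a single strategy pair can be held fixed along an entire linear piece, so that the selection is genuinely affine. Optimal strategies may switch in the interior of a piece where $\Phi$ stays linear.

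My first attempt would be to refine the pieces by the finite arrangement of all cycle halfspaces, which is finite since $\mathcal S$ and $\mathcal T$ are finite. On each cell of this refinement the set of tight constraints is constant, and the minimal-norm or Kleene-star solution $x=\overline{\,\cdot\,}$-residuation of a fixed linear system is affine in $\lambda$.
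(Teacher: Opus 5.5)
Your non-emptiness argument is the paper's: it cites Proposition~\ref{prop:reformulation} and $\calP\subseteq\calR$. You add the welcome extra step of normalising $t=0$ by homogeneity. In fact, once $x^*(\lambda_1,\lambda_2)$ is read as $\{x \text{ feasible}: f_k(x)\le\lambda_k,\ k=1,2\}$, non-emptiness is literally the definition~\eqref{eq:feasibility_region} of $\calR$, so even the route through $\Phi$ is unnecessary.

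The equality detour should be deleted, not merely flagged, because it fails on whole pieces of $\calP$ and not just at endpoints. On Piece~1 of Example~2, every $(1,\lambda_2)$ with $\lambda_2>1$ lies in $\calP$. Yet $f_1(x)\le 1$ and $x_1\le 1$ force $x_1=1$, and then $f_2(x)=\lambda_2$ would need $x_2=1-\lambda_2<0$; Table~\ref{tab:pareto_points2} records exactly this. So the ``$\le$'' reading is forced, and it is the one the paper uses.

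That detour also leaned on the convexity of $\calR$, which you should avoid. With $\tau$ fixed it is Max who chooses, so $\chi_j(f^\tau)$ is the \emph{largest} accessible cycle mean, and $\Phi_\tau\ge 0$ does not mean every cycle is nonnegative. Concretely, take the constraint $0\le\max(x_1,x_2)$ with $f_1=\max(x_1,x_2-10,-10-x_1,-10-x_2)$ and $f_2=\max(x_1-10,x_2,-10-x_1,-10-x_2)$. The points $x=(0,-4)$ and $x=(-4,0)$ put $(0,-4)$ and $(-4,0)$ in $\calR$, but $(-2,-2)\notin\calR$.

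For piecewise linearity your route genuinely differs from the paper's. The paper simply asserts that the active Max strategy is constant on each linear piece of $\calP$ and then invokes the alcoved polyhedron of~\cite{PSW2023}. That assertion is exactly the obstacle you identify, and the paper gives no argument for it.

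Your refinement avoids the assertion, and it works as follows. Fix a Max strategy $\sigma$; it is forced on the $\lambda$-rows. Then $P_\sigma(\lambda)=\{x: a_{ij}+z_j\le b_{i\sigma(i)}(\lambda)+z_{\sigma(i)}\ \forall i,j,\ z=(x,0)\}$ is a difference-constraint polyhedron, and $x^*(\lambda)=\bigcup_\sigma P_\sigma(\lambda)$. Moreover $P_\sigma(\lambda)\neq\emptyset$ iff every cycle of $G^\sigma$ has $k_1\lambda_1+k_2\lambda_2+s\ge 0$. Hence on each cell of the finite cycle-line arrangement restricted to $\calP$, the set of usable $\sigma$ is constant and nonempty, and you may freeze one. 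The price is that your pieces may be finer than those of $\calP$. The gain is that you need only finiteness of the pieces of $\calP$, which survives without convexity.

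Three fixes are still needed.

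First, the cycle lines decide which $P_\sigma$ are nonempty, but they do not fix the combinatorics of a canonical point. A shortest-path (Kleene-star) point is a minimum over paths from $v_0$, and path comparisons are not cycle inequalities. So either add those finitely many lines to the arrangement, or claim only ``piecewise affine on each cell''. The paper's ``extreme points depend linearly'' glosses over the same issue.

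Second, the invariant half-line and ``solve the tight constraints'' do not define a selection on their own, since $v$ and the threshold $T$ depend on $\lambda$ uncontrollably. Keep your fallback instead.

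Third, your gluing step is invalid. $x^*(\lambda)$ is monotone only for the componentwise order, but along $\calP$ one coordinate rises as the other falls. Neighbouring points are therefore incomparable. The one-sided limits of your selections at a cell boundary both lie in $x^*$, but nothing makes them coincide. The paper proves no continuity either. If ``varies piecewise linearly'' means finitely many affine pieces, simply drop the gluing; otherwise a genuinely new argument is required.
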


\begin{proof}
Existence follows from Proposition~\ref{prop:reformulation} and the 
definition of $\calP$. Piecewise linearity follows from the fact that on 
each linear piece of $\calP$, the active strategy $\sigma$ of Max is fixed, 
and the solution is determined by the alcoved polyhedron of~\cite{PSW2023}, 
whose extreme points depend linearly on $(\lambda_1, \lambda_2)$.
\end{proof}

\section{Optimality and Infeasibility Certificates}
\label{sec:certificates}

We now provide certificates for Pareto optimality and infeasibility
in terms of the mean-payoff game structure.
The optimality certificate (Proposition~\ref{prop:optimality}) is the
bi-objective analogue of the optimality certificate
of~\cite[Proposition~3.5]{PSW2023}. The infeasibility certificate
(Proposition~\ref{prop:infeasibility}) replaces the unboundedness
certificate of~\cite[Proposition~3.6]{PSW2023}, the reason the second
certificate changes both name and meaning in the bi-objective setting
is explained in Remark~\ref{rem:no_unboundedness} below.

\begin{proposition}
\label{prop:optimality}
(Optimality Certificate).
A point $(\lambda_1^*, \lambda_2^*) \in \mathbb{R}^2$ lies on the 
Pareto front $\calP$ if and only if the following two conditions hold:

\begin{enumerate}
  \item[\emph{(i)}] $\Phi(\lambda_1^*, \lambda_2^*) \geq 0$.

  \item[\emph{(ii)}] There exists a strategy $\tau$ of Min such that 
  in the mean-payoff game defined by $A_\tau$ and $B(\lambda_1^*, \lambda_2^*)$, 
  all cycles accessible from some node $j$ of Min have non-positive mean 
  weight, and at least one zero-weight cycle accessible from $j$ contains 
  a Max node from the groups corresponding to $p_1$, $q_1$, $p_2$, or 
  $q_2$ in~\eqref{eq:AB_matrices}.
\end{enumerate}
\end{proposition}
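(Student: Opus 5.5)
The plan is to reduce membership in $\calP$ to a one-dimensional statement along the diagonal direction, and then read both directions off the cycle-mean formula for one-player games. Three earlier facts are used: the feasibility criterion $\calR=\{\Phi\ge 0\}$ from Proposition~\ref{prop:reformulation}, the duality $\Phi=\min_\tau\Phi_\tau$ from Proposition~\ref{prop:saddle}, and the fact that $\Phi$ is componentwise non-decreasing and continuous (Proposition~\ref{prop:piecewise_linear}).

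\textbf{Step 1: reduction to the diagonal.} By monotonicity of $\Phi$, a point $(\lambda_1^*,\lambda_2^*)$ lies in $\calP$ if and only if $\Phi(\lambda_1^*,\lambda_2^*)\ge 0$ and $\Phi(\lambda_1^*-\varepsilon,\lambda_2^*-\varepsilon)<0$ for every $\varepsilon>0$. Indeed, any $(\lambda_1',\lambda_2')$ with both coordinates strictly smaller is dominated by some diagonal shift $(\lambda_1^*-\varepsilon,\lambda_2^*-\varepsilon)$, and $\Phi$ is non-decreasing. By continuity of $\Phi$, the second condition forces $\Phi(\lambda^*)=0$. For a fixed Min strategy $\tau$, the game with $A_\tau$ is a max-plus one-player game. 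So, by \eqref{eq:cycletime_maxplus} together with Lemma~\ref{lem:cyclemean}, $\chi_j(f^\tau)$ is the \emph{maximal} mean
\[
\frac{k_1(\gamma)\lambda_1+k_2(\gamma)\lambda_2+s(\gamma)}{|\gamma|}
\]
over cycles $\gamma$ accessible from $j$.

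\textbf{Step 2: sufficiency.} Assume (i) and (ii). Take the strategy $\tau$ and node $j$ from (ii). At $\lambda^*$ every accessible cycle has non-positive mean, and the zero-mean accessible cycles are those carrying the $\lambda$-arcs certified in (ii). Shifting by $-\varepsilon$ on the diagonal changes each cycle mean by $-\varepsilon(k_1+k_2)/|\gamma|$. So every zero cycle with $k_1+k_2\ge1$ becomes strictly negative, and cycles that were already negative stay negative for small $\varepsilon$ (finitely many cycles). Hence $\chi_j(f^\tau)(\lambda^*-\varepsilon)<0$, and so
\[
\Phi(\lambda^*-\varepsilon)\le\Phi_\tau(\lambda^*-\varepsilon)<0 .
\]
By monotonicity this extends from small $\varepsilon$ to all $\varepsilon>0$. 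Combined with (i) and Step~1, $\lambda^*\in\calP$.

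There is a wrinkle here. Condition (ii) as literally stated only asks that \emph{one} zero cycle contain a $\lambda$-arc. To conclude, I need every zero-mean accessible cycle to have $k_1+k_2\ge1$. I will either read (ii) in that sense, or choose $j$ so that the only zero cycles accessible from $j$ are the certified ones; this is a point of the statement I will make precise.

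\textbf{Step 3: necessity.} Let $\lambda^*\in\calP$, so $\Phi(\lambda^*)=0$ and $\Phi(\lambda^*-\varepsilon_k)<0$ along a sequence $\varepsilon_k\downarrow0$. By Proposition~\ref{prop:saddle} there are $\tau_k$ and $j_k$ with $\chi_{j_k}(f^{\tau_k})(\lambda^*-\varepsilon_k)<0$. Since $\mathcal{T}$ and $[n+1]$ are finite, pass to a subsequence on which $\tau_k=\tau$ and $j_k=j$ are constant. Continuity then gives $\chi_j(f^\tau)(\lambda^*)\le0$, while $\Phi_\tau(\lambda^*)\ge\Phi(\lambda^*)=0$ gives $\chi_j(f^\tau)(\lambda^*)\ge0$. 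Hence the maximum accessible cycle mean from $j$ at $\lambda^*$ is exactly $0$: all accessible cycles are non-positive and some cycle $\gamma$ has zero mean.

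Every such zero cycle must have $k_1(\gamma)+k_2(\gamma)\ge1$. Otherwise its mean would be constant along the diagonal, giving $\chi_j(f^\tau)(\lambda^*-\varepsilon_k)\ge0$, a contradiction. Arcs carrying $\lambda_1$ or $\lambda_2$ are precisely the arcs leaving the $p_1,q_1,p_2,q_2$ row groups of \eqref{eq:AB_matrices}. So $\gamma$ contains a Max node from those groups, which gives (ii), and (i) holds since $\Phi(\lambda^*)=0$.

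\textbf{Main obstacle.} I expect the main difficulty to be the quantifier issue in necessity: no single $\tau$ is given in advance, and it has to be extracted through the finiteness/subsequence argument. The related subtlety is ensuring that \emph{all} zero cycles at the chosen node, not merely one, carry a parameter arc. I will handle both exactly as in Step~3, using the same pair $(\tau,j)$ for the limit and for the contradiction.
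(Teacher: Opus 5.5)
Your necessity argument (Step~3) is complete, and your sufficiency argument is correct once (ii) is strengthened. The wrinkle you flag is real, and no choice of $j$ can remove it, because with (ii) read literally the ``if'' direction is false.

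Here is a counterexample. Keep the objectives of Example~1, $f_1=\max(2-x_1,x_2)$ and $f_2=\max(-x_2,x_1)$, but impose $x_1\le\max(x_2,0)$ and $x_2\le x_1$. That is, take $U$ as in Example~1, $V=\begin{pmatrix}-\infty&0\\0&-\infty\end{pmatrix}$, $b=(-\infty,-\infty)^\top$ and $d=(0,-\infty)^\top$. Set $\tau(v_0)=L_1^{(1)}$ (the $p_1$-node of $x_1$), $\tau(v_1)=R_1$ and $\tau(v_2)=R_2$. Among the reachable Max nodes only $R_1$ branches: to $v_2$ via $V_{12}=0$ and to $v_0$ via $d_1=0$. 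So the elementary cycles accessible from $j=v_0$ are exactly two:
$v_0\to L_1^{(1)}\to v_1\to R_1\to v_0$, of weight $\lambda_1-2$, and $v_1\to R_1\to v_2\to R_2\to v_1$, of weight $0$.

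At $\lambda^*=(2,1)$ both cycles have zero mean and the first contains the $p_1$-node, so (ii) holds literally. But $x=(\tfrac12,\tfrac12)$ is feasible with $(f_1,f_2)=(\tfrac32,\tfrac12)$. Hence $(\tfrac32,\tfrac12)\in\calR$, so (i) holds at $(2,1)$, yet $(2,1)\notin\calP$. The data-only zero cycle keeps $\chi_{v_0}(f^\tau)$ at $0$ along the diagonal, which is exactly the failure you anticipated.

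So adopt your first option and state it as the hypothesis: every zero-mean cycle accessible from $j$ passes through a $p_1,q_1,p_2,q_2$ node, equivalently has $k_1+k_2\geq 1$. Step~3 already produces this stronger form, so your Steps~1--3 prove the corrected equivalence in full.

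Your route also differs from the paper's, and the difference matters. The paper starts from the halfspace description $\calR=\bigcap_\tau\bigcap_\gamma\{k_1(\gamma)\lambda_1+k_2(\gamma)\lambda_2\geq -s(\gamma)\}$ used in the proof of Theorem~\ref{thm:pareto_structure}(i). It then declares $\lambda^*$ a boundary point as soon as one cycle with $(k_1,k_2)\neq(0,0)$ is tight.

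That description is not valid. $\chi_j(f^\tau)$ is a \emph{maximum} over accessible cycles, so $\Phi_\tau\geq 0$ only requires some nonnegative cycle accessible from each $j$, not that every cycle be nonnegative. In the instance above, the tight cycle's halfspace $\lambda_1\geq 2$ excludes $(\tfrac32,\tfrac12)\in\calR$. This is precisely the step through which the paper obtains the false ``if'' direction.

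Your argument uses only valid ingredients:
\begin{itemize}
\item the diagonal reduction from monotonicity and continuity of $\Phi$;
\item the duality $\Phi=\min_\tau\Phi_\tau$;
\item the max-accessible-cycle formula for the max-plus map $f^\tau$;
\item the finiteness extraction of $(\tau,j)$.
\end{itemize}
It also isolates exactly the hypothesis that the single-tight-cycle reasoning overlooks.
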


\begin{proof}
$(\lambda_1^*, \lambda_2^*) \in \calP$ means $(\lambda_1^*, \lambda_2^*)
\in \calR$ and $(\lambda_1^*, \lambda_2^*)$ lies on the lower boundary
of $\calR$.

The condition $(\lambda_1^*, \lambda_2^*) \in \calR$ is equivalent to 
condition~(i) by Proposition~\ref{prop:reformulation}.

For the lower boundary condition, we use the halfspace representation
of $\calR$ from Theorem~\ref{thm:pareto_structure}(i) and
Lemma~\ref{lem:cyclemean}. By that lemma, each cycle $\gamma$ 
contributes the halfspace 
$k_1(\gamma)\lambda_1 + k_2(\gamma)\lambda_2 \geq -s(\gamma)$.
The point $(\lambda_1^*, \lambda_2^*)$ lies on the lower boundary
of $\calR$ if and only if at least one such halfspace is active with 
$(k_1(\gamma), k_2(\gamma)) \neq (0,0)$, meaning that a componentwise
decrease in $(\lambda_1^*,\lambda_2^*)$ would exit $\calR$. An active halfspace with 
$k_1(\gamma)\lambda_1^* + k_2(\gamma)\lambda_2^* + s(\gamma) = 0$ and 
$(k_1(\gamma), k_2(\gamma)) \neq (0,0)$ corresponds precisely to a cycle 
$\gamma$ of zero mean weight that carries at least one $\lambda_1$ or 
$\lambda_2$ arc.

Arcs carrying $\lambda_1$ or $\lambda_2$ emanate from Max nodes 
corresponding to the $p_1$, $q_1$, $p_2$, $q_2$ groups 
in~\eqref{eq:AB_matrices}, that is, Max nodes outside the constraint 
group $[m]$. Thus condition~(ii) is precisely the requirement that such 
an active zero-weight cycle exists. The strategy $\tau$ of Min is any 
strategy under which this cycle is accessible from some Min node $j$, 
ensuring $\Phi_\tau(\lambda_1^*, \lambda_2^*) = 0$ with the cycle 
structure described.
\end{proof}

\begin{remark}
\label{rem:optimality_geometric}
Condition~(ii) has a clear geometric interpretation. The Max nodes 
outside the constraint group $[m]$ are precisely those whose outgoing 
arcs carry the parameters $\lambda_1$ or $\lambda_2$. A zero-weight 
cycle through such a node means the current parameter values 
$(\lambda_1^*, \lambda_2^*)$ are exactly tight, reducing either 
$\lambda_1$ or $\lambda_2$ (or both) would make this cycle have 
negative mean weight, pushing the point out of $\calR$. This is why 
such a cycle witnesses Pareto optimality.
\end{remark}

\begin{proposition}
\label{prop:infeasibility}
(Infeasibility Certificate).
The constraint $U \otimes x \oplus b \leq V \otimes x \oplus d$ is 
infeasible, equivalently $\calR = \emptyset$, if and only if 
$\chi_j(\overline{A}_c\,B_c) < 0$ for all $j = 1, \ldots, n+1$,
where $A_c = (U\ b)$ and $B_c = (V\ d)$ are the $m \times (n+1)$
matrices formed by the constraint rows of $A$ and $B(\lambda_1,\lambda_2)$
respectively.
\end{proposition}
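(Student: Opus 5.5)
The plan is to read the statement through the homogenised system $A_c \otimes z \leq B_c \otimes z$, with $z=(x^\top,t)^\top\in\Rmax^{n+1}$, which is exactly the constraint row group of Step~1 in Section~\ref{sec:parameterisation}. Then I would apply Proposition~\ref{prop:solvability} coordinatewise, once in each direction.

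First I would check that $A_c$ and $B_c$ satisfy Assumptions~\ref{ass:max} and~\ref{ass:min}. This follows from the hypotheses of Proposition~\ref{prop:assumptions}, since every row of $U$ and $V$ has a finite entry. The column $t$ of $A_c$ needs a finite entry of $b$; if it has none, I would note that the $t$-coordinate is unconstrained from the Min side and treat that case separately. Theorem~\ref{thm:mpg_value} then makes $\chi(\overline{A}_c B_c)\in\mathbb{R}^{n+1}$ well defined.

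\emph{Direction ``$\Leftarrow$''.} Suppose $\chi_j(\overline{A}_cB_c)<0$ for every $j\in[n+1]$. By Proposition~\ref{prop:solvability}, for each $j$ no solution $z$ of $A_c\otimes z\le B_c\otimes z$ has $z_j\ne-\infty$. Hence the only solution is $z\equiv-\infty$. Any $x$ satisfying~\eqref{eq:constraint}, with some coordinate finite, gives $z=(x,0)$ with $z_{n+1}=0$ finite, which is a contradiction. So the constraint is infeasible. In particular no finite $x$ exists, so no $(\lambda_1,\lambda_2)$ can be certified and $\calR=\emptyset$; this uses that the constraint rows of~\eqref{eq:two_sided} are exactly $A_c,B_c$.

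\emph{Direction ``$\Rightarrow$''.} Here I argue by contraposition. Suppose some $\chi_j(\overline{A}_cB_c)\ge0$. Proposition~\ref{prop:solvability}, via the invariant half-line construction recalled after it, gives a nontrivial solution $z=(x,t)$ with $z_j$ finite. If $t$ is finite, the additive homogeneity of both sides lets me shift to $t=0$. This gives a solution $x\in\Rmax^n$ of~\eqref{eq:constraint}, so the constraint is feasible.

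The main obstacle is the case where every nontrivial solution has $t=-\infty$, so that $z$ only solves $U\otimes x\le V\otimes x$. The plan is to show that $x$ can then be combined with a solution carrying finite $t$. I would first try the following. Take the invariant half-line $v+s\chi$ with $s$ large. The coordinates with $\chi_i\ge0$ form the support of the solution, and feasibility of the rows containing $b$ has to be enforced by rows where $(V\otimes x)_i$ dominates $b_i$ after scaling $x\mapsto x+\alpha$ with $\alpha$ large. Rows with $(U\otimes x)_i=(V\otimes x)_i=-\infty$ would need $b_i\le d_i$. I expect this to be exactly where the argument is delicate. I would handle it by noting that such rows can be satisfied by the $t$-coordinate only if $\chi_{n+1}\ge0$, and I would check whether Assumption~\ref{ass:wellposed} (finite solutions exist in the intended setting) rules out the residual configuration.

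Finally I would record the equivalence with $\calR=\emptyset$ in the same way. The constraint rows are a subsystem of~\eqref{eq:two_sided}, so any point of $\calR$ yields a finite solution of the constraint rows. By Proposition~\ref{prop:solvability} this forces $\chi_j(\overline{A}_cB_c)\ge0$ for all $j$, which closes the chain of implications used above.
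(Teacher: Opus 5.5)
Your ``$\Leftarrow$'' direction is correct. So is your closing remark that any point of $\calR$ gives a finite solution of $A_c\otimes z\le B_c\otimes z$, which forces $\chi_j(\overline{A}_cB_c)\ge 0$ for \emph{every} $j$.

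The gap is in ``$\Rightarrow$'', exactly at the obstacle you flag, and it cannot be closed. If $\chi_j(\overline{A}_cB_c)\ge0$ for a single $j$, Proposition~\ref{prop:solvability} only produces a solution $z$ whose $j$-th coordinate is finite. That $z$ may have $t=-\infty$. Even when $t$ is finite, the resulting $x$ can have $-\infty$ entries (a row $x_2\le(-1)\otimes x_2$ forces this), so it does not witness $\calR\neq\emptyset$, which requires $x\in\mathbb{R}^n$. Your rescaling $x\mapsto x+\alpha$ works only when $x$ is entirely finite, and the residual configuration does occur.

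Take $n=2$, $m=3$ with constraint rows $x_2\oplus 0\le x_2\oplus(-1)$, $x_2\le(-1)\otimes x_2$ and $x_1\le x_1$. Thus $b=(0,-\infty,-\infty)^\top$ and $d=(-1,-\infty,-\infty)^\top$. Every row of $U$ and $V$ has a finite entry, and $A_c,B_c$ satisfy Assumptions~\ref{ass:max} and~\ref{ass:min}.
\begin{itemize}
\item The first row forces $x_2\ge 0$ and the second forces $x_2=-\infty$. So no $x\in\Rmax^2$ whatsoever is feasible, and $\calR=\emptyset$.
\item Yet $z=(0,-\infty,-\infty)^\top$ solves $A_c\otimes z\le B_c\otimes z$, and in fact $\chi(\overline{A}_cB_c)=(0,-1,-1)$.
\end{itemize}
So the ``$\Rightarrow$'' half of the statement is false as written. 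Appealing to Assumption~\ref{ass:wellposed} does not rescue it: that makes the certificate vacuous, and the certificate is meant to test that very assumption.

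The paper's own proof makes precisely the leap you hesitated over. It reads ``$z_j\neq-\infty$ for some $j$'' as ``a finite feasible $x$ exists'', so it does not supply the missing step either.

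What is true is the statement with the quantifier changed: $\calR=\emptyset$ if and only if $\chi_j(\overline{A}_cB_c)<0$ for \emph{some} $j$. One direction is your last paragraph. For the other, suppose $\chi_j\ge0$ for all $j$, and pick solutions $z^{(j)}$ with $z^{(j)}_j$ finite. The solution set is closed under $\oplus$, since $A_c\otimes(z\oplus z')=A_c\otimes z\oplus A_c\otimes z'$, and it is also closed under adding a scalar. Hence $\bigoplus_j z^{(j)}$ is a solution with all coordinates finite. Subtracting its last coordinate gives $t=0$ and a finite feasible $x$, hence a point of $\calR$.

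If $-\infty$ entries in $x$ are allowed, feasibility is instead governed by $\chi_{n+1}(\overline{A}_cB_c)\ge0$ alone.
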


\begin{proof}
The feasibility region $\calR$ is empty if and only if there exists no 
finite $x \in \mathbb{R}^n$ satisfying 
$U \otimes x \oplus b \leq V \otimes x \oplus d$. This is because 
for any finite feasible $x$, taking $\lambda_k = f_k(x)$ gives a 
point $(\lambda_1, \lambda_2) \in \calR$.

The structural constraint $U \otimes x \oplus b \leq V \otimes x 
\oplus d$ is precisely the two-sided system $A_c \otimes z \leq 
B_c \otimes z$ where $z = (x^\top, t)^\top \in \mathbb{R}^{n+1}$
and the rows of $A_c$ and $B_c$ are the $m$ constraint rows of 
$A$ and $B(\lambda_1,\lambda_2)$ (which are independent of $\lambda_1,
\lambda_2$: $A_c = (U\ b)$ and $B_c = (V\ d)$ at all parameter values).

By Proposition~\ref{prop:solvability} applied to the matrices $A_c$ 
and $B_c$:
\[
  \chi_j(\overline{A}_c\,B_c) \geq 0
  \;\iff\;
  \text{there exists } z \in \mathbb{R}^{n+1} \text{ with }
  A_c \otimes z \leq B_c \otimes z \text{ and } z_j \neq -\infty.
\]
The right-hand condition (with $j \in \{1,\ldots,n\}$, so $z_j = x_j \neq -\infty$) 
is exactly the feasibility of $U \otimes x \oplus b \leq V \otimes x \oplus d$
with $x_j$ finite. Therefore $\calR \neq \emptyset$ if and only if 
$\chi_j(\overline{A}_c\,B_c) \geq 0$ for some $j$, and 
$\calR = \emptyset$ if and only if $\chi_j(\overline{A}_c\,B_c) < 0$ 
for all $j = 1, \ldots, n+1$.
\end{proof}

\begin{remark}
\label{rem:infeasibility_practical}
In practice, infeasibility can be detected before running the bisection 
algorithm by calling an MPG solver on the constraint subgame defined 
by $A_c = (U\ b)$ and $B_c = (V\ d)$. If $\chi_j(\overline{A}_c\,B_c) < 0$ 
for all $j$, the structural constraint is infeasible and the algorithm 
terminates immediately. This check adds no asymptotic overhead since 
the MPG solve is already required in the bisection algorithm.
\end{remark}

\begin{remark}
\label{rem:no_unboundedness}
Parsons et~al. \cite{PSW2023} also establish exactly two
certificates: optimality and unboundedness, mirroring the
classical LP duality framework. Their \emph{unboundedness certificate}
applies when the scalar parameter $\lambda$ has no finite minimum:
the constraint $A \otimes x \leq B(\lambda) \otimes x$ remains feasible
for every $\lambda \in \mathbb{R}$, so the infimum of $\lambda$ is
$-\infty$. This occurs precisely when no $\lambda$-arc participates in
any critical cycle of the game, so reducing $\lambda$ never causes
infeasibility. The certificate is therefore a statement about the
\emph{cycle structure} of the game relative to the parameter arcs.

In the present bi-objective setting. We are not minimizing a single scalar $\lambda$ over
$\mathbb{R}$, we are characterizing the lower boundary $\calP$ of
the feasibility region $\calR \subseteq \mathbb{R}^2$. There is no
notion of pushing a parameter to $-\infty$, so unboundedness in the sense of Parsons et~al. \cite{PSW2023} simply does not arise. Instead, the
relevant second outcome is that the constraint set
$U \otimes x \oplus b \leq V \otimes x \oplus d$ is itself empty, no finite $x$ satisfies the constraints regardless of the parameter
values $(\lambda_1,\lambda_2)$. This is genuine \emph{infeasibility},
and Proposition~\ref{prop:infeasibility} certifies it via the
constraint subgame with matrices $A_c$ and $B_c$. When $\calR \neq \emptyset$,
the Pareto front $\calP$ always exists as a well-defined convex
piecewise-linear curve, even if it extends to infinity as a set, an
unbounded $\calP$ is ordinary geometry already covered by
Theorem~\ref{thm:pareto_structure}, not a degenerate case requiring
a separate certificate. The two-certificate framework,
Propositions~\ref{prop:optimality} and~\ref{prop:infeasibility},
is therefore complete for the bi-objective problem.
\end{remark}

\subsection*{Verification on the Worked Examples}

\textit{Example~1.} At $P_2=(1,1)$, the cycle
$\gamma:L_1^{(1)}\!\to v_1\!\to L_2^{(0)}\!\to v_0\!\to L_1^{(1)}$
has $(k_1,k_2,s)=(1,1,-2)$ and $W_\gamma(1,1)=1+1-2=0$, witnessing
Pareto optimality.

\textit{Example~2.} At $B^*=(1,1)$, two cycles are simultaneously
active: the Piece~1 cycle has $(k_1,k_2,s)=(1,0,-1)$,
$W_\gamma(1,1)=1-1=0$; the Piece~2 cycle has $(k_1,k_2,s)=(1,1,-2)$,
$W_\gamma(1,1)=1+1-2=0$.  Their simultaneous zero weight confirms $B^*$
as the breakpoint.  At $P_4=(2,0)$, $(k_1,k_2,s)=(1,0,-2)$,
$W_\gamma(2,0)=2-2=0$.

\section{Directional Bisection Algorithm}
\label{sec:algorithm}
\subsection{Tracing a Single Pareto Point}

To find one point on the Pareto front, we fix a weight 
$w \in (0,1)$ and solve the weighted scalar problem:

\begin{equation}
\label{eq:weighted_scalar}
  \lambda^*(w) = \min\left\{ w\lambda_1 + (1-w)\lambda_2 : 
  \Phi(\lambda_1, \lambda_2) \geq 0 \right\}.
\end{equation}

This is equivalent to bisecting on the parameter 
$c = w\lambda_1 + (1-w)\lambda_2$ along the family of lines 
$\{w\lambda_1 + (1-w)\lambda_2 = c\}$. For each value of $c$, the 
feasibility condition $\Phi(\lambda_1,\lambda_2) \geq 0$ on the line 
reduces to a single-parameter problem that can be solved by the 
bisection method of~\cite{PSW2023}.

Specifically, substituting $\lambda_2 = (c - w\lambda_1)/(1-w)$ into 
the system $A \otimes z \leq B(\lambda_1, \lambda_2) \otimes z$, 
the parameter $c$ plays the role of the single scalar parameter 
$\lambda$ in~\cite{PSW2023}, and bisection on $c$ gives the optimal 
value $c^*$. The corresponding Pareto point is then 
$(\lambda_1^*, \lambda_2^*)$ with $w\lambda_1^* + (1-w)\lambda_2^* = c^*$.

\subsection{Directional Scalarisation and Full Front Recovery}

By Theorem~\ref{thm:pareto_structure}(ii), the Pareto front $\calP$ has 
finitely many breakpoints. Each linear piece of $\calP$ corresponds to a 
direction $w \in (0,1)$ that is perpendicular to that piece. By sweeping 
$w$ over a finite grid of values, we recover all pieces of $\calP$.

For integer data, by Theorem~\ref{thm:pareto_structure}(iii), all 
breakpoints have coordinates that are integer multiples of $\tfrac{1}{2}$. 
The number of distinct breakpoints is bounded by the number of strategies 
of Min in the MPG, which is finite. Therefore the algorithm below 
terminates in a finite number of steps.

\begin{algorithm2e}[H]
\caption{Directional Bisection for the Pareto Front}
\label{alg:directional_bisection}
\DontPrintSemicolon
\KwIn{Data matrices $U,V,b,d,p_1,p_2$ with integer or $-\infty$
  entries; $q_1,q_2$ with integer or $+\infty$ entries;
  weight grid $W=\{w_1,\ldots,w_K\}\subset(0,1)$.}
\KwOut{Pareto-optimal triples
  $\{(\lambda_1^{(k)},\lambda_2^{(k)},x^{(k)})\}_{k=1}^{K}$
  with $(\lambda_1^{(k)},\lambda_2^{(k)})\in\calP$ and
  $f_j(x^{(k)})=\lambda_j^{(k)}$ for $j=1,2$.}
\BlankLine
\For{$k = 1$ \KwTo $K$}{
  \BlankLine
  \tcp{Initialise bounds for direction $w_k$}
  $c^{(-)} \leftarrow (q_1^-\otimes(p_1\oplus p_2))^{\otimes 1/2}$
  \Comment*[r]{lower bound: \cite[Lemma~3.2]{PSW2023}}\;
  Find feasible $x$ for $U\otimes x\oplus b\leq V\otimes x\oplus d$\;
  $c^{(+)} \leftarrow \lceil w_k f_1(x)+(1-w_k)f_2(x)\rceil_{1/2}$
  \Comment*[r]{upper bound}
  \BlankLine
  \tcp{Bisection on the scalar $c$}
  \While{$c^{(-)} < c^{(+)}$}{
    $c \leftarrow \lfloor (c^{(-)}+c^{(+)})/2 \rceil_{1/2}$
    \Comment*[r]{midpoint rounded to half-integer}
    Substitute $\lambda_2 \leftarrow (c - w_k\lambda_1)/(1-w_k)$ into
      $A\otimes z \leq B(\lambda_1,\lambda_2)\otimes z$\;
    Solve the resulting single-parameter system in $\lambda_1$
      via~\cite{PSW2023} to obtain $(\lambda_1,\lambda_2)$\;
    \eIf{a finite solution $z$ exists}{
      $c^{(+)} \leftarrow c$
      \Comment*[r]{feasible: tighten upper bound}
    }{
      $c^{(-)} \leftarrow c$
      \Comment*[r]{infeasible: raise lower bound}
    }
  }
  \BlankLine
  \tcp{Record Pareto triple}
  $(\lambda_1^{(k)},\lambda_2^{(k)}) \leftarrow$ point on
    $w_k\lambda_1+(1-w_k)\lambda_2=c^{(-)}$\;
  Solve $A\otimes z\leq B(\lambda_1^{(k)},\lambda_2^{(k)})\otimes z$
    and set $x^{(k)}\leftarrow(z_1,\ldots,z_n)^\top$\;
}
\BlankLine
\KwRet{$\{(\lambda_1^{(k)},\lambda_2^{(k)},x^{(k)})\}_{k=1}^{K}$}
\end{algorithm2e}

\begin{proposition}
\label{prop:termination}
Let the finite data entries be integers bounded in absolute value by $M$.
For each direction $w_k$, the bisection sub-loop of
Algorithm~\ref{alg:directional_bisection} terminates in at most
$\lceil\log_2(2M)\rceil + 1$ oracle calls.
Each oracle call constructs the shadow graph and applies Karp's
minimum-cycle-mean algorithm~\cite{Karp1978}, costing
$O\bigl(n^2(n+m)\bigr)$ arithmetic operations.
The total cost of Algorithm~\ref{alg:directional_bisection} per
direction is therefore
\[
  O\!\left(n^2(n+m)\log M\right).
\]
For $m = O(n)$ this simplifies to $O(n^3 \log M)$.
The overall complexity is pseudo-polynomial: it is polynomial in $n$,
$m$, and $M$, but exponential in the bit-length of the data.
\end{proposition}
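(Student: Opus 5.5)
The proof has three parts: bound the number of bisection iterations, bound the cost of one oracle call, and multiply the two.

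\textbf{Iteration count.} First I will show that both initial bounds $c^{(-)}$ and $c^{(+)}$ are half-integers lying in an interval of length $O(M)$.

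\emph{Lower bound.} The value $c^{(-)}$ comes from \cite[Lemma~3.2]{PSW2023}. It is a half-integer built from the data entries, so $|c^{(-)}| \leq M$.

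\emph{Upper bound.} Take the feasible $x$ produced by the invariant half-line construction behind Proposition~\ref{prop:solvability}. For integer data its entries can be normalized so that each $f_k(x)$ is an integer of absolute value $O(M)$. Since $w_k \in (0,1)$, the convex combination $w_k f_1(x) + (1-w_k) f_2(x)$ also has absolute value $O(M)$, and rounding up to a half-integer preserves this. Hence $c^{(+)} - c^{(-)} \leq 2M$ after the appropriate normalization.

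\emph{Termination.} By Theorem~\ref{thm:pareto_structure}(iii), the Pareto breakpoints have half-integer coordinates. In the single-parameter reduction along a line, the optimal $c^*$ is a breakpoint value of a Newton-type system, so it can be taken on the half-integer grid. The bisection only probes half-integer midpoints and halves the gap at each step. After $\lceil \log_2(2M) \rceil$ halvings the gap is at most $1/2$, and one further oracle call decides between the two remaining grid points. This gives at most $\lceil\log_2(2M)\rceil + 1$ calls.

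\textbf{Cost of one oracle call.} Fix $c$, and let $\lambda_1$ be set by the single-parameter routine of \cite{PSW2023}. I will bound the size of the game as follows:
\begin{itemize}
\item it has $N = m+2n+2$ Max nodes and $n+1$ Min nodes;
\item the number of arcs is $O(n(n+m))$, since the $U,V$ rows contribute $O(mn)$ arcs and the $\lambda$-groups contribute $O(n)$.
\end{itemize}
For a candidate strategy, checking whether $\Phi \geq 0$ reduces to a one-player cycle-mean test by Proposition~\ref{prop:saddle} and Proposition~\ref{prop:biobjective_game}(iii). This is the shadow-graph construction of \cite{PSW2023}, and on it Karp's algorithm takes $O(|\text{nodes}|\cdot|\text{arcs}|) = O(n \cdot n(n+m))$ operations. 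That is the claimed $O(n^2(n+m))$ per call.

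\textbf{Conclusion and main obstacle.} Multiplying the iteration count by the per-call cost gives $O(n^2(n+m)\log M)$ per direction, which becomes $O(n^3\log M)$ when $m = O(n)$. The pseudo-polynomial remark holds because $\log M$ is polynomial in the bit-length. The step I expect to be hardest is justifying that a single Karp call decides feasibility; a full mean-payoff game solve is not Karp-polynomial. I would argue this the way \cite{PSW2023} does: their oracle is stated per shadow-graph cycle-mean evaluation, and I take that statement as the unit cost. A secondary point to check carefully is that $c^*$ genuinely lies on the half-integer grid in the weighted direction, which may require choosing $w_k$ rational with small denominators.
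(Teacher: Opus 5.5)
Your outline matches the paper's proof: an interval of length $2M$ on the half-integer grid, then shadow graph plus Karp per call, then multiply. But two of its steps are not established. The paper's proof only asserts them too, so there is nothing there to lean on.

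\textbf{The per-call cost.} The main gap is the one you flag yourself: taking the per-call cost ``as the unit cost'' assumes what is to be proved. Each probe must decide whether the level-$c$ line meets $\calR$, which means deciding a two-player mean-payoff game. In Algorithm~\ref{alg:directional_bisection} it is even a full single-parameter optimization in $\lambda_1$. By Proposition~\ref{prop:saddle}, $\Phi=\max_\sigma\Phi^\sigma$. So a one-player cycle-mean test for one candidate $\sigma$ can certify feasibility, but it cannot certify infeasibility unless $\sigma$ is already known to be optimal, and there are exponentially many $\sigma$. A single Karp run solves a one-player problem on one digraph and does not in general return the two-player value. No polynomial algorithm for mean-payoff games is known, so $O(n^2(n+m))$ per call cannot be obtained this way. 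A defensible version counts oracle calls and multiplies by the cost of a genuine solver (e.g.\ pseudo-polynomial value iteration). That is also the only place pseudo-polynomiality could come from. Your closing remark points the other way: since $\log M$ is polynomial in the bit-length, an $O(n^2(n+m)\log M)$ bound would be \emph{polynomial} in the bit-length. That contradicts the statement's last sentence rather than proving it.

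\textbf{The interval bound.} This step also fails. ``$O(M)$'' would not give ``$\le 2M$'' in any case, and the normalization is unavailable. The affine terms $b,d$ pin the homogenizing coordinate at $t=0$, so feasible $x$ cannot be shifted. Here is a concrete instance. Take data in $\{-M,0,M\}$ and $m=n$, with rows encoding $x_1\ge M$ and $x_{i+1}\ge x_i+M$; these force $x_n\ge nM$. With $q_{1n}=q_{2n}=0$, every feasible $x$ then has $f_1(x),f_2(x)\ge nM$, while $c^{(-)}\le M$. Thus $c^{(+)}-c^{(-)}\ge (n-1)M$, and bisection makes about $\log_2(nM)$ probes. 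That exceeds $\lceil\log_2(2M)\rceil+1$ for large $n$, so no bound of the stated form can hold; the count must grow with $n$.

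\textbf{Half-integrality of $c^*$.} Your appeal to Theorem~\ref{thm:pareto_structure}(iii) is both unnecessary and false as used. For $w\ne\frac12$, $c^*=w\lambda_1^*+(1-w)\lambda_2^*$ need not be a half-integer; the paper's own Example~1 with $w=\frac34$ has optimum $\frac34$. But bisection over $K$ grid points stops after about $\lceil\log_2 K\rceil$ probes regardless, so only the grid size matters for the count.
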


\begin{proof}
The bisection sub-loop operates on the interval $[c^{(-)}, c^{(+)}]$
and halves the search interval at each step by rounding the midpoint
to the nearest half-integer.
The lower bound $c^{(-)} = (q_1^-\otimes(p_1\oplus p_2))^{\otimes 1/2}$
is a valid lower bound on the optimal value $c^*$ because for any
feasible $x$ and any $(\lambda_1,\lambda_2)\in\calR$, one has
$w\lambda_1+(1-w)\lambda_2 \geq c^{(-)}$; see~\cite[Lemma~3.2]{PSW2023}
for the derivation in the single-parameter setting, which carries
over directly.
Since the data is integer and bounded by $M$, the initial interval
satisfies $c^{(+)} - c^{(-)} \leq 2M$, so there are at most
$4M + 1$ candidate half-integer values in the interval.
By Theorem~\ref{thm:pareto_structure}(iii), the optimal value $c^*$ is
a half-integer, so bisection on half-integers terminates in at most
$\lceil\log_2(4M+1)\rceil \leq \lceil\log_2(2M)\rceil + 1$ steps.

Each oracle call requires:
(i) constructing the shadow graph $W$ with $n+1$ Min nodes, where each
of the $O(n^2)$ entries $W[j][k] = \max_r(B[r][k]-A[r][j])$ is
computed by scanning the $m+2n+2$ rows of $A$ and $B$, costing
$O\bigl((n+1)^2(m+2n+2)\bigr) = O\bigl(n^2(n+m)\bigr)$; and
(ii) running Karp's algorithm on the shadow graph with $n+1$ nodes,
costing $O((n+1)^2) = O(n^2)$.
The dominant cost is step (i), giving $O\bigl(n^2(n+m)\bigr)$
per oracle call and $O\bigl(n^2(n+m)\log M\bigr)$ per direction.
\end{proof}

\section{Scheduling Problem}
\label{sec:example}

We illustrate the theory with two complete examples. We first describe
their common scheduling interpretation, then work through each in full.

\subsection{Scheduling Interpretation}
\label{sec:scheduling}

Both examples model two-task project scheduling problems ($n=2$,
$x=(x_1,x_2)^\top\in\mathbb{R}^2$) with a flow-time type objective
$f_1$ and a lateness type objective $f_2$.  The Pareto front $\calP$
gives the complete trade-off: for each achievable level of $f_1$, the
minimum $f_2$ simultaneously achievable, and vice versa.

\medskip
\noindent\textbf{Example~1 — Precedence-constrained schedule.}
The constraint $|x_1 - x_2| \leq 1$ encodes a two-way precedence
relation, neither task can start more than one time unit after the
other. This models two tasks that must be carried out in near-parallel,
such as two stages of a pipeline that cannot be too far out of
synchronisation. The objective $f_1(x) = \max(2 - x_1,\, x_2)$
penalises scheduling task~1 too late (the term $2 - x_1$ is positive
when $x_1 < 2$) and task~2 too early (the term $x_2$ grows with the
start time of task~2). The objective $f_2(x) = \max(-x_2,\, x_1)$
penalises task~2 starting before time~0 and task~1 starting late.
The Pareto front $\lambda_1 + \lambda_2 = 2$ for $\lambda_1 \in
[\tfrac{1}{2}, 2]$ shows that the total penalty is always~$2$,
reducing one penalty forces an equal increase in the other, giving a
perfectly symmetric trade-off.

\medskip
\noindent\textbf{Example~2 — Release date and deadline constraints.}
The constraint $x_1 \leq 1$ encodes a deadline: task~1 must start by
time~$1$. The constraint $x_2 \geq 0$ encodes a release date: task~2
cannot start before time~$0$. These are the most basic feasibility
constraints in scheduling. The objective $f_1(x) = \max(2 - x_1,\, x_2)$
again penalises task~1 starting too late and task~2 starting too early.
The objective $f_2(x) = \max(1 - x_2,\, x_1)$ penalises task~2
starting after its release time window and task~1 starting late.

The two-piece Pareto front reflects the asymmetry introduced by the
hard deadline on task~1. On Piece~1 ($\lambda_1 = 1$, $\lambda_2 \geq
1$), the deadline $x_1 \leq 1$ is binding: task~1 cannot start earlier,
so $f_1$ is pinned at its minimum value of~$1$ while $f_2$ can be
reduced further by delaying task~2. On Piece~2 ($\lambda_1 + \lambda_2
= 2$, $\lambda_2 \leq 1$), the deadline is no longer the bottleneck and
both penalties trade off symmetrically. The breakpoint $B^* = (1,1)$
is the scheduling threshold: at $(1,1)$, the deadline on task~1 and the
joint penalty bound $\lambda_1 + \lambda_2 = 2$ are simultaneously
active, marking the transition between the two regimes.

\subsection{Example 1: A Single-Piece Pareto Front}
\label{sec:example1}

\subsection{Problem Data}

Let $n = 2$, $m = 2$, and take:

\[
  U = \begin{pmatrix} 0 & -\infty \\ -\infty & 0 \end{pmatrix}, \quad
  V = \begin{pmatrix} -\infty & 1 \\ 1 & -\infty \end{pmatrix}, \quad
  b = \begin{pmatrix} -\infty \\ -\infty \end{pmatrix}, \quad
  d = \begin{pmatrix} -\infty \\ -2 \end{pmatrix},
\]
\[
  p_1 = \begin{pmatrix} 2 \\ -\infty \end{pmatrix}, \quad
  p_2 = \begin{pmatrix} -\infty \\ 0 \end{pmatrix}, \quad
  q_1 = \begin{pmatrix} +\infty \\ 0 \end{pmatrix}, \quad
  q_2 = \begin{pmatrix} 0 \\ +\infty \end{pmatrix}.
\]

\subsection{The Constraint Set}

Expanding $U\otimes x\oplus b\le V\otimes x\oplus d$ row by row:
Row~1 gives $x_1\le 1+x_2$ and Row~2 gives $x_2\le 1+x_1$.
Together these are equivalent to $|x_1-x_2|\le 1$, giving the feasible
region $\mathcal{R}=\{x\in\mathbb{R}^2:|x_1-x_2|\le 1\}$.

\subsection{The Objective Functions}

Computing directly from the definitions:
\[
  f_1(x) = x^- \otimes p_1 \oplus q_1^- \otimes x 
         = \max(2 - x_1,\ x_2),
\]
\[
  f_2(x) = x^- \otimes p_2 \oplus q_2^- \otimes x 
         = \max(-x_2,\ x_1).
\]

\subsection{The Matrices $A$ and $B(\lambda_1, \lambda_2)$}

Following~\eqref{eq:AB_matrices} with $z = (x_1, x_2, t)^\top \in \mathbb{R}^3$,
the five row groups give matrices of size $(m+2n+2)\times(n+1) = 8\times 3$:

\[
  A = \begin{pmatrix}
    0      & -\infty & -\infty \\   
    -\infty & 0      & -\infty \\   
    -\infty & -\infty & 2      \\   
    -\infty & -\infty & -\infty \\  
    -\infty & 0      & -\infty \\   
    -\infty & -\infty & -\infty \\  
    -\infty & -\infty & 0      \\   
    0      & -\infty & -\infty      
  \end{pmatrix},
\]
\[
  B(\lambda_1, \lambda_2) = \begin{pmatrix}
    -\infty & 1      & -\infty \\   
    1      & -\infty & -2     \\    
    \lambda_1 & -\infty & -\infty \\ 
    -\infty & \lambda_1 & -\infty \\ 
    -\infty & -\infty & \lambda_1 \\ 
    \lambda_2 & -\infty & -\infty \\ 
    -\infty & \lambda_2 & -\infty \\ 
    -\infty & -\infty & \lambda_2    
  \end{pmatrix}.
\]

\subsection{The Feasibility Region and Pareto Front}

For a given $(\lambda_1, \lambda_2) \in \mathbb{R}^2$, the system 
$f_1(x) \leq \lambda_1$, $f_2(x) \leq \lambda_2$, $x \in \mathcal{F}$ 
is feasible if and only if there exists $x$ with:
\begin{alignat*}{2}
  2 - \lambda_1 &\leq x_1 \leq \lambda_2, &\qquad& 
  \text{(from } f_1 \leq \lambda_1 \text{ and } f_2 \leq \lambda_2\text{)},\\
  -\lambda_2 &\leq x_2 \leq \lambda_1, &\qquad& 
  \text{(from } f_1 \leq \lambda_1 \text{ and } f_2 \leq \lambda_2\text{)},\\
  |x_1 - x_2| &\leq 1. &\qquad& \text{(feasibility constraint)}
\end{alignat*}

Analysing these conditions, the feasibility region is:
\[
  \calR = \left\{ (\lambda_1, \lambda_2) \in \mathbb{R}^2 : 
  \lambda_1 + \lambda_2 \geq 2,\ \lambda_1 \geq \tfrac{1}{2},\ 
  \lambda_2 \geq 0 \right\}.
\]

The Pareto front is the lower boundary of $\calR$:
\[
  \calP = \left\{ (\lambda_1, \lambda_2) : 
  \lambda_1 + \lambda_2 = 2,\ 
  \tfrac{1}{2} \leq \lambda_1 \leq 2 \right\}.
\]

This is a single linear piece, consistent with Theorem~\ref{thm:pareto_structure}(ii). 
All boundary values $\tfrac{1}{2}$ and $2$ are integer multiples of $\tfrac{1}{2}$, 
consistent with Theorem~\ref{thm:pareto_structure}(iii).

\subsection{Summary and Verification}

Three points on $\calP$ are verified: at each, the optimal $x^*$
satisfies $|x_1-x_2|\le 1$ and $f_k(x^*)=\lambda_k$ for $k=1,2$;
reducing either objective strictly forces the other to worsen.
Results are in Table~\ref{tab:pareto_points}.

\begin{table}[h]
\centering
\caption{Verified Pareto points for the worked example.}
\label{tab:pareto_points}
\begin{tabular}{ccccc}
\toprule
Point & $(\lambda_1, \lambda_2)$ & Optimal $x^*$ & $f_1(x^*)$ & $f_2(x^*)$ \\
\midrule
$P_1$ & $(1/2,\ 3/2)$ & $(3/2,\ 1/2)^\top$ & $1/2$ & $3/2$ \\
$P_2$ & $(1,\ 1)$ & $(1,\ 0)^\top$ & $1$ & $1$ \\
$P_3$ & $(3/2,\ 1/2)$ & $(1/2,\ -1/2)^\top$ & $3/2$ & $1/2$ \\
\bottomrule
\end{tabular}
\end{table}

\noindent All three points satisfy $\lambda_1 + \lambda_2 = 2$, confirming 
the Pareto front. The optimal solution map along $\calP$ is:
\[
  x^*(\lambda_1, \lambda_2) = \begin{pmatrix} \lambda_2 \\ \lambda_1 - 1 \end{pmatrix}, 
  \quad (\lambda_1, \lambda_2) \in \calP,
\]
which varies linearly along the Pareto front as predicted by 
Corollary~\ref{cor:solution_map}.

Figure~\ref{fig:pareto_example1} shows the feasibility region $\calR$
and the Pareto front $\calP$ geometrically, together with three
directional bisection families illustrating how different weight
choices $w$ recover different points on $\calP$.

\begin{figure}[ht]
\centering
\begin{tikzpicture}[scale=2.2, >=Stealth]
  \draw[->](-0.15,0)--(2.6,0) node[right]{$\lambda_1$};
  \draw[->](0,-0.15)--(0,2.6) node[above]{$\lambda_2$};
  \foreach \x in {0.5,1,1.5,2,2.5}
    \draw(\x,0.03)--(\x,-0.03) node[below,font=\scriptsize]{$\x$};
  \foreach \y in {0.5,1,1.5,2,2.5}
    \draw(0.03,\y)--(-0.03,\y) node[left,font=\scriptsize]{$\y$};
  \fill[blue!12,opacity=0.8]
    (0.5,1.5)--(2,0)--(2.5,0)--(2.5,2.5)--(0.5,2.5)--cycle;
  \draw[gray,thick,dashed](0.5,0)--(0.5,2.5)
    node[above,font=\scriptsize,gray]{$\lambda_1=\tfrac{1}{2}$};
  \draw[red,ultra thick](0.5,1.5)--(2,0)
    node[midway,above right,font=\small,red]
    {$\mathcal{P}:\lambda_1+\lambda_2=2$};
  \draw[->,thick,red!60!black](1.25,0.75)--(1.05,0.55)
    node[below left,font=\scriptsize]{SW};
  \filldraw[red](0.5,1.5) circle(1.2pt)
    node[left,font=\scriptsize]{$(\tfrac{1}{2},\tfrac{3}{2})$};
  \filldraw[red](1,1) circle(1.2pt)
    node[above right,font=\scriptsize]{$(1,1)$};
  \filldraw[red](1.5,0.5) circle(1.2pt)
    node[above right,font=\scriptsize]{$(\tfrac{3}{2},\tfrac{1}{2})$};
  \filldraw[red](2,0) circle(1.2pt)
    node[below right,font=\scriptsize]{$(2,0)$};
  \draw[green!60!black,thick,dotted](0,1.75)--(2.5,1.75-2.5/3)
    node[right,font=\scriptsize,green!60!black]{$w=\tfrac{1}{4}$};
  \draw[orange,thick,dotted](0,2)--(2,0)
    node[right,font=\scriptsize,orange]{$w=\tfrac{1}{2}$};
  \draw[purple,thick,dotted](1,2)--(5/3,0)
    node[below,font=\scriptsize,purple]{$w=\tfrac{3}{4}$};
  \node[blue!50!black,font=\small] at (1.9,1.9){$\mathcal{R}$};
\end{tikzpicture}
\caption{Feasibility region $\mathcal{R}$ (blue shading) and Pareto
front $\mathcal{P}$ (red segment) for Example~1. The region $\mathcal{R}$
is convex, being bounded by three halfspaces. The Pareto front is the
southwest boundary, the single linear segment $\lambda_1+\lambda_2=2$
for $\lambda_1\in[\tfrac{1}{2},2]$. Dotted lines show the directional
bisection families for three weights: the green family ($w=\tfrac{1}{4}$)
finds the right endpoint $(2,0)$ with $c^*=\tfrac{1}{2}$; the orange
family ($w=\tfrac{1}{2}$) finds the midpoint $(1,1)$ with $c^*=1$; the
purple family ($w=\tfrac{3}{4}$) also converges to $(1,1)$ with $c^*=1$,
because the true weighted sum at $(\tfrac{1}{2},\tfrac{3}{2})$ equals
$\tfrac{3}{4}\cdot\tfrac{1}{2}+\tfrac{1}{4}\cdot\tfrac{3}{2}=\tfrac{3}{4}$,
which is not a half-integer; bisection returns the smallest feasible
half-integer $c^*=1$ above $\tfrac{3}{4}$.
Each family consists of parallel lines $w\lambda_1+(1-w)\lambda_2=c$;
bisection finds the smallest half-integer $c$ at which such a line
touches $\mathcal{R}$.}
\label{fig:pareto_example1}
\end{figure}

Algorithm~\ref{alg:directional_bisection} applied to Example~1 with
weight grid $W=\{\tfrac{1}{4},\tfrac{1}{2},\tfrac{3}{4}\}$ and
initial feasible point $x=(0,0)^\top$ returns the following
Pareto-optimal triples, each verified to satisfy $f_j(x^{(k)})=\lambda_j^{(k)}$
and the feasibility constraint $|x_1-x_2|\leq 1$:

\begin{center}
\renewcommand{\arraystretch}{1.3}
\begin{tabular}{ccccc}
\toprule
$k$ & $w_k$ & $\lambda_1^{(k)}$ & $\lambda_2^{(k)}$ & $x^{(k)}$ \\
\midrule
$1$ & $\tfrac{1}{4}$ & $2$ & $0$ & $(0,\ 1)^\top$ \\
$2$ & $\tfrac{1}{2}$ & $1$ & $1$ & $(1,\ 0)^\top$ \\
$3$ & $\tfrac{3}{4}$ & $1$ & $1$ & $(1,\ 0)^\top$ \\
\bottomrule
\end{tabular}
\end{center}

\noindent All triples lie on $\lambda_1+\lambda_2=2$,
$\lambda_1\in[\tfrac{1}{2},2]$, confirming
Theorem~\ref{thm:pareto_structure}.  The optimal solution map
$x^*(\lambda_1,\lambda_2)=(\lambda_2,\,\lambda_1-1)^\top$ is
recovered correctly at each point.  Weights $w=\tfrac{1}{2}$ and
$w=\tfrac{3}{4}$ return the same point $(1,1)$: the minimum
$\tfrac{3}{4}\cdot\tfrac{1}{2}+\tfrac{1}{4}\cdot\tfrac{3}{2}=\tfrac{3}{4}$
is not a half-integer, so bisection returns the nearest feasible
half-integer $c^*=1$.

\subsection{Example 2: A Two-Piece Pareto Front}
\label{sec:example2}

We now give a second example where the Pareto front has two distinct 
linear pieces meeting at an interior breakpoint, demonstrating that the 
piecewise-linear structure of Theorem~\ref{thm:pareto_structure}(ii) 
is non-trivial.

\subsubsection*{Problem Data}

Let $n = 2$, $m = 2$, and take:
\[
  U = \begin{pmatrix} 0 & -\infty \\ -\infty & -\infty \end{pmatrix}, \quad
  V = \begin{pmatrix} -\infty & -\infty \\ -\infty & 0 \end{pmatrix}, \quad
  b = \begin{pmatrix} -\infty \\ 0 \end{pmatrix}, \quad
  d = \begin{pmatrix} 1 \\ -\infty \end{pmatrix},
\]
\[
  p_1 = \begin{pmatrix} 2 \\ -\infty \end{pmatrix}, \quad
  p_2 = \begin{pmatrix} -\infty \\ 1 \end{pmatrix}, \quad
  q_1 = \begin{pmatrix} +\infty \\ 0 \end{pmatrix}, \quad
  q_2 = \begin{pmatrix} 0 \\ +\infty \end{pmatrix}.
\]

\subsubsection*{The Constraint Set}

Expanding row by row: Row~1 gives $x_1\le 1$; Row~2 gives $x_2\ge 0$.
The feasible region is $\mathcal{R}=\{x\in\mathbb{R}^2:x_1\le 1,\,x_2\ge 0\}$.

\subsubsection*{The Objective Functions}
\[
  f_1(x)=\max(2-x_1,\,x_2),\qquad f_2(x)=\max(1-x_2,\,x_1).
\]

The corresponding parametric MPG is shown in
Figure~\ref{fig:mpg_example2}.

\begin{figure}[ht]
\centering
\begin{tikzpicture}[
  minnode/.style={circle,draw=black,fill=white,
    minimum size=0.72cm,inner sep=1pt,thick,font=\small},
  lam1node/.style={rectangle,draw=blue!70!black,fill=blue!8,
    minimum size=0.72cm,inner sep=1pt,thick,font=\small},
  lam2node/.style={rectangle,draw=red!70!black,fill=red!8,
    minimum size=0.72cm,inner sep=1pt,thick,font=\small},
  consnode/.style={rectangle,draw=gray!60!black,fill=gray!8,
    minimum size=0.72cm,inner sep=1pt,thick,font=\small},
  ->,>=Stealth,thick
]
\node[minnode](v1) at (0, 2.2){$v_1$};
\node[minnode](v2) at (0, 0.5){$v_2$};
\node[minnode](v0) at (0,-1.8){$v_0$};
\node[consnode](R1) at (-3.2, 2.2){$R_1$};
\node[consnode](R2) at (-3.2, 0.5){$R_2$};
\node[lam1node](L1a) at (3.4, 3.0){$L_1^{(1)}$};
\node[lam1node](L1b) at (3.4, 1.3){$L_1^{(0)}$};
\node[lam2node](L2a) at (3.4,-0.4){$L_2^{(2)}$};
\node[lam2node](L2b) at (3.4,-1.8){$L_2^{(0)}$};
\draw[->](v1)--(R1) node[midway,above,font=\scriptsize]{$0$};
\draw[->](R1) to[out=270,in=145](v0)
  node[midway,left,font=\scriptsize]{$1$};
\draw[->](v0) to[out=145,in=260](R2)
  node[midway,left,font=\scriptsize]{$0$};
\draw[->](R2)--(v2) node[midway,above,font=\scriptsize]{$0$};
\draw[->](v0) to[out=40,in=215](L1a)
  node[midway,above,font=\scriptsize]{$-2$};
\draw[->,dashed,blue!80!black,thick](L1a) to[out=195,in=35](v1)
  node[midway,above,font=\scriptsize,blue!80!black]{$\lambda_1$};
\draw[->](v2) to[out=25,in=195](L1b)
  node[midway,above,font=\scriptsize]{$0$};
\draw[->,dashed,blue!80!black,thick](L1b) to[out=260,in=55](v0)
  node[midway,right,font=\scriptsize,blue!80!black]{$\lambda_1$};
\draw[->](v0) to[out=355,in=200](L2a)
  node[midway,below,font=\scriptsize]{$-1$};
\draw[->,dashed,red!80!black,thick](L2a) to[out=180,in=350](v2)
  node[midway,below,font=\scriptsize,red!80!black]{$\lambda_2$};
\draw[->](v1) to[out=320,in=145](L2b)
  node[near start,right,font=\scriptsize]{$0$};
\draw[->,dashed,red!80!black,thick](L2b) to[out=180,in=5](v0)
  node[midway,below,font=\scriptsize,red!80!black]{$\lambda_2$};
\begin{scope}[on background layer]
  \node[fill=gray!10,rounded corners=7pt,fit=(R1)(R2),inner sep=6pt]{};
  \node[fill=blue!7,rounded corners=7pt,fit=(L1a)(L1b),inner sep=6pt]{};
  \node[fill=red!7,rounded corners=7pt,fit=(L2a)(L2b),inner sep=6pt]{};
  \node[fill=green!7,rounded corners=7pt,fit=(v1)(v2)(v0),inner sep=6pt]{};
\end{scope}
\node[font=\small,gray!70!black,left=0.2cm of R2]{constraint $C_1$};
\node[font=\small,blue!80!black] at (3.4, 4.0)
  {$\lambda_1$ group};
\node[font=\small,red!80!black]  at (3.4,-2.7)
  {$\lambda_2$ group};
\end{tikzpicture}
\caption{Parametric MPG for Example~2 ($n=2$, $m=2$). Min nodes
$v_1,v_2$ (columns $x_1,x_2$) and $v_0$ (column $t$, homogenising
variable). Arc weights follow the convention of Lemma~\ref{lem:cyclemean}:
each Min-to-Max arc from $v_j$ to Max node $r$ carries weight $-A[r][j]$,
and each Max-to-Min arc carries weight $B[r][j]$.
Constraint group $C_1$: node $R_1$ encodes $x_1\leq 1$
(arcs $v_1\to R_1$ weight~$0$, $R_1\to v_0$ weight~$1$); node $R_2$
encodes $x_2\geq 0$ (arcs $v_0\to R_2$ weight~$0$, $R_2\to v_2$
weight~$0$). The $\lambda_1$ group (blue, dashed): $L_1^{(1)}$ from
the $p_1$ row with $p_{1,1}=2$ (arcs $v_0\to L_1^{(1)}$ weight~$-2$,
$L_1^{(1)}\to v_1$ weight~$\lambda_1$); $L_1^{(0)}$ from the $q_1$ row
with $q_{1,2}=0$ (arcs $v_2\to L_1^{(0)}$ weight~$0$,
$L_1^{(0)}\to v_0$ weight~$\lambda_1$). The $\lambda_2$ group (red,
dashed): $L_2^{(2)}$ from the $p_2$ row with $p_{2,2}=1$ (arcs
$v_0\to L_2^{(2)}$ weight~$-1$, $L_2^{(2)}\to v_2$
weight~$\lambda_2$); $L_2^{(0)}$ from the $q_2$ row with $q_{2,1}=0$
(arcs $v_1\to L_2^{(0)}$ weight~$0$, $L_2^{(0)}\to v_0$
weight~$\lambda_2$). The two $\lambda$-groups are disjoint and
interact only through Min nodes $v_0,v_1,v_2$, confirming
the hypothesis of Theorem~\ref{thm:pareto_structure}(iii).}
\label{fig:mpg_example2}
\end{figure}

\subsubsection*{The Feasibility Region}

For $({\lambda_1}, {\lambda_2}) \in \mathcal{P}$, we need $x \in \mathcal{R}$ 
satisfying $f_1(x) \leq \lambda_1$ and $f_2(x) \leq \lambda_2$, which gives:
\begin{alignat*}{3}
  &\text{From } f_1 \leq \lambda_1{:} \quad 
    &x_1 &\geq 2 - \lambda_1, \quad &x_2 &\leq \lambda_1, \\
  &\text{From } f_2 \leq \lambda_2{:} \quad 
    &x_2 &\geq 1 - \lambda_2, \quad &x_1 &\leq \lambda_2, \\
  &\text{From } \mathcal{R}{:} \quad 
    &x_1 &\leq 1, \quad &x_2 &\geq 0.
\end{alignat*}
The $x_1$-conditions require $2 - \lambda_1 \leq \min(\lambda_2, 1)$, 
giving two cases:

\medskip
\noindent\textbf{Case 1} ($\lambda_2 \geq 1$): 
binding upper bound on $x_1$ is $x_1 \leq 1$, requiring 
$2 - \lambda_1 \leq 1$, i.e.\ $\lambda_1 \geq 1$.

\medskip
\noindent\textbf{Case 2} ($\lambda_2 < 1$): 
binding upper bound on $x_1$ is $x_1 \leq \lambda_2$, requiring 
$2 - \lambda_1 \leq \lambda_2$, i.e.\ $\lambda_1 + \lambda_2 \geq 2$.

\medskip
The $x_2$-conditions are satisfied whenever $\lambda_1 \geq 0$ and 
$\lambda_2 \geq 0$, which hold automatically for both cases above.
Therefore the feasibility region is:
\[
  \calP = \bigl\{(\lambda_1,\lambda_2) : 
    \lambda_1 \geq 1,\, \lambda_2 \geq 1\bigr\}
    \;\cup\;
    \bigl\{(\lambda_1,\lambda_2) : 
    \lambda_1 + \lambda_2 \geq 2,\, 0 \leq \lambda_2 \leq 1\bigr\}.
\]
This simplifies, since for $\lambda_2 \geq 1$ the condition 
$\lambda_1 \geq 1$ together with $\lambda_2 \geq 1$ implies 
$\lambda_1 + \lambda_2 \geq 2$. Thus:
\[
  \calP = \bigl\{(\lambda_1,\lambda_2) \in \mathbb{P}^2 : 
  \lambda_1 + \lambda_2 \geq 2,\; \lambda_1 \geq 1 \text{ when } 
  \lambda_2 \geq 1\bigr\}.
\]

\subsubsection*{The Two-Piece Pareto Front}

The lower boundary of $\calP$ consists of two pieces:

\medskip
\noindent\textbf{Piece~1} (vertical): 
$\lambda_1 = 1$ with $\lambda_2 \geq 1$. 
Moving southwest (decreasing $\lambda_1$ below~$1$) exits $\calP$ 
since the constraint $\lambda_1 \geq 1$ in Case~1 is violated.

\medskip
\noindent\textbf{Piece~2} (diagonal): 
$\lambda_1 + \lambda_2 = 2$ with $0 \leq \lambda_2 \leq 1$. 
Moving southwest along this piece decreases both coordinates until 
$\lambda_2 = 0$, i.e.\ the point $(2, 0)$.

\medskip
The two pieces meet at the \textbf{breakpoint}:
\[
  B^* = (1,\, 1),
\]
where $\lambda_1 = 1$ (Piece~1) and $\lambda_1 + \lambda_2 = 2$ 
with $\lambda_2 = 1$ (Piece~2) are simultaneously active.

The complete Pareto front is:
\[
  \calP = 
  \underbrace{\bigl\{(1, \lambda_2) : \lambda_2 \geq 1\bigr\}}_{\text{Piece 1}}
  \;\cup\;
  \underbrace{\bigl\{(\lambda_1, 2-\lambda_1) : 1 \leq \lambda_1 \leq 2\bigr\}}_{\text{Piece 2}}.
\]

\noindent Note that Piece~1 extends upward without bound (as 
$\lambda_2 \to \infty$), corresponding to the regime where $f_1$ 
is the binding objective. Piece~2 is bounded, ending at the point 
$(2, 0)$ where $f_2$ achieves its minimum value of zero.

\subsubsection*{Verification of Four Pareto Points}

Five points are verified directly; results are summarised in
Table~\ref{tab:pareto_points2}.
On Piece~1, $x^*=(1,0)^\top$ witnesses all three points $P_1$, $P_2$,
and $B^*$: feasibility $1\le 1$, $0\ge 0$; $f_1(1,0)=1=\lambda_1$;
$f_2(1,0)=1\le\lambda_2$; reducing $\lambda_1$ below~$1$ would require
$x_1>1$, violating the hard constraint.
On Piece~2, $x^{(3)}=(\tfrac{1}{2},\tfrac{1}{2})^\top$ witnesses $P_3$
and $x^{(4)}=(0,1)^\top$ witnesses $P_4=(2,0)$; in both cases
$f_k(x^*)=\lambda_k$ for $k=1,2$, and improving either objective forces
the other to worsen.
All three parts of Theorem~\ref{thm:pareto_structure} hold: $\calR$ is
defined by linear inequalities (convex); $\calP$ has exactly two pieces
meeting at $B^*=(1,1)$; all data are integers and every breakpoint
coordinate is a half-integer.

The verified Pareto points are summarised in Table~\ref{tab:pareto_points2}.

\begin{table}[h]
\centering
\caption{Verified Pareto points for Example~2.
The breakpoint $B^*$ lies at the junction of Piece~1 and Piece~2.
On Piece~1 ($\lambda_1 = 1$ is binding), only $f_1(x^*) = \lambda_1$
is required to hold with equality; the second objective satisfies
$f_2(x^*) \leq \lambda_2$ (with strict inequality for $P_1$ and
$P_2$, where $f_2(x^*) = 1 < \lambda_2$). On Piece~2 both objectives
are simultaneously tight: $f_k(x^*) = \lambda_k$ for $k = 1,2$.}
\label{tab:pareto_points2}
\begin{tabular}{cccccc}
\toprule
Point & $(\lambda_1,\lambda_2)$ & Optimal $x^*$ & $f_1(x^*)$ & $f_2(x^*)$ & Piece \\
\midrule
$P_1$ & $(1,\ 2)$     & $(1,\ 0)^\top$   & $1$   & $1$   & Piece 1 \\
$P_2$ & $(1,\ 3/2)$   & $(1,\ 0)^\top$   & $1$   & $1$   & Piece 1 \\
$B^*$ & $(1,\ 1)$     & $(1,\ 0)^\top$   & $1$   & $1$   & Breakpoint \\
$P_3$ & $(3/2,\ 1/2)$ & $(1/2,\ 1/2)^\top$ & $3/2$ & $1/2$ & Piece 2 \\
$P_4$ & $(2,\ 0)$     & $(0,\ 1)^\top$   & $2$   & $0$   & Piece 2 \\
\bottomrule
\end{tabular}
\end{table}

Figure~\ref{fig:pareto_example2} shows the feasibility region and
two-piece Pareto front geometrically, including the breakpoint $B^*$
and how different weight directions recover points on each piece.

\begin{figure}[ht]
\centering
\begin{tikzpicture}[scale=2.2, >=Stealth]
  \draw[->](-0.15,0)--(2.6,0) node[right]{$\lambda_1$};
  \draw[->](0,-0.15)--(0,2.6) node[above]{$\lambda_2$};
  \foreach \x in {0.5,1,1.5,2,2.5}
    \draw(\x,0.03)--(\x,-0.03) node[below,font=\scriptsize]{$\x$};
  \foreach \y in {0.5,1,1.5,2,2.5}
    \draw(0.03,\y)--(-0.03,\y) node[left,font=\scriptsize]{$\y$};
  \fill[blue!12,opacity=0.8]
    (1,2.5)--(2.5,2.5)--(2.5,0)--(2,0)--(1,1)--cycle;
  \draw[gray,thick,dashed](1,0)--(1,2.5)
    node[above,font=\scriptsize,gray]{$\lambda_1=1$};
  \draw[gray,thick,dashed](1,1)--(2,0);
  \draw[red,ultra thick](1,1)--(1,2.5)
    node[above,font=\small,red]{Piece~1};
  \draw[red,ultra thick](1,1)--(2,0)
    node[midway,below right,font=\small,red]{Piece~2};
  \filldraw[red!80!black](1,1) circle(1.5pt)
    node[below left,font=\small,red!80!black]{$B^*=(1,1)$};
  \filldraw[red](1,2)   circle(1.2pt)
    node[right,font=\scriptsize]{$(1,2)$};
  \filldraw[red](1,1.5) circle(1.2pt)
    node[right,font=\scriptsize]{$(1,\tfrac{3}{2})$};
  \filldraw[red](1.5,0.5) circle(1.2pt)
    node[above right,font=\scriptsize]{$(\tfrac{3}{2},\tfrac{1}{2})$};
  \filldraw[red](2,0) circle(1.2pt)
    node[below right,font=\scriptsize]{$(2,0)$};
  \draw[->,thick,red!60!black](1,1.7)--(0.82,1.7)
    node[left,font=\scriptsize]{SW};
  \draw[->,thick,red!60!black](1.5,0.5)--(1.32,0.32)
    node[below left,font=\scriptsize]{SW};
  \draw[green!60!black,thick,dotted](0,7/3)--(2.5,7/3-2.5/3)
    node[right,font=\scriptsize,green!60!black]{$w=\tfrac{1}{4}$};
  \draw[orange,thick,dotted](0,2)--(2,0)
    node[right,font=\scriptsize,orange]{$w=\tfrac{1}{2}$};
  \draw[purple,thick,dotted](1,2)--(5/3,0)
    node[below,font=\scriptsize,purple]{$w=\tfrac{3}{4}$};
  \node[blue!50!black,font=\small] at (2.0,1.9){$\mathcal{R}$};
  \draw[<-,thin,gray](1,1)--(0.5,0.55)
    node[below left,font=\scriptsize,gray,align=center]
    {active constraint\\switches here};
\end{tikzpicture}
\caption{Feasibility region $\mathcal{R}$ (blue shading) and Pareto
front $\mathcal{P}$ (red) for Example~2. The Pareto front has two
linear pieces: Piece~1 (vertical, $\lambda_1=1$, $\lambda_2\geq 1$)
and Piece~2 (diagonal, $\lambda_1+\lambda_2=2$, $\lambda_2\leq 1$),
meeting at the breakpoint $B^*=(1,1)$. At $B^*$ the binding
constraint switches from the structural deadline $x_1\leq 1$
(Piece~1) to the combined objective bound $\lambda_1+\lambda_2=2$
(Piece~2). Directional bisection with $w<\tfrac{1}{2}$ (green)
finds points on Piece~1; with $w=\tfrac{1}{2}$ (orange) it finds
$B^*$; with $w>\tfrac{1}{2}$ (purple) it finds points on Piece~2.}
\label{fig:pareto_example2}
\end{figure}

\begin{remark}
The geometric interpretation is clear. On Piece~1 the constraint 
$x_1 \leq 1$ is active, so $\lambda_1 = f_1(x) = \max(2-x_1, x_2)$ 
is pinned at~$1$ by the constraint on $x_1$, while $\lambda_2$ can 
vary freely. On Piece~2 the combined objective constraint is active, 
and both $\lambda_1$ and $\lambda_2$ vary along the line 
$\lambda_1 + \lambda_2 = 2$. The breakpoint $B^* = (1,1)$ is where 
the active constraint switches from the structural constraint on $x_1$ 
to the combined objective constraint. This is the tropical bi-objective 
analogue of a change of active strategy in the mean-payoff game, as 
described in Theorem~\ref{thm:pareto_structure}(ii).
\end{remark}

\section{A Newton Scheme for Tracing the Pareto Front}
\label{sec:newton}

The directional bisection algorithm of Section~\ref{sec:algorithm}
locates one Pareto point per direction $w$ in
$O(\log_2(c^{(+)}-c^{(-)}))$ iterations. We now develop a Newton
scheme that instead jumps directly between consecutive breakpoints of
$\calP$, requiring at most $|\mathcal{S}|$ steps in total regardless
of data magnitude. This extends the Newton scheme
of~\cite[Section~4]{PSW2023} to the bi-objective setting.

\subsection{Left-Optimal Strategies in Direction $w$}

Fix a direction $w \in (0,1)$ and a point $(\lambda_1^*, \lambda_2^*)
\in \calP$ with $w\lambda_1^* + (1-w)\lambda_2^* = c^*$. We
parametrise the line through $(\lambda_1^*,\lambda_2^*)$ with slope
$-w/(1-w)$ by the scalar $c$ and write $(\lambda_1(c),\lambda_2(c))$
for the unique point on this line with $w\lambda_1+(1-w)\lambda_2=c$.

\begin{definition}
\label{def:left_optimal}
A strategy $\sigma^* \in \mathcal{S}$ of Max is
\emph{left-optimal in direction $w$ at $c^*$} if
\[
  \Phi^{\sigma^*}(\lambda_1(c),\lambda_2(c)) 
  = \Phi(\lambda_1(c),\lambda_2(c))
  \quad \text{for all } c \in (c^*-\delta,\, c^*)
\]
for some $\delta > 0$. That is, $\sigma^*$ achieves the maximum
in $\Phi = \max_\sigma \Phi^\sigma$ on the line immediately to the
left of $c^*$ in direction $w$.
\end{definition}

Since $\mathcal{S}$ is finite and $\Phi^\sigma$ is piecewise linear,
left-optimal strategies exist at every $c^*$ and can be identified by
evaluating $\Phi^\sigma(\lambda_1(c^*-\varepsilon),
\lambda_2(c^*-\varepsilon))$ for small $\varepsilon > 0$.

\subsection{The Newton Step}

At any point $(\lambda_1^*, \lambda_2^*) \in \calP$, the left-optimal
strategy $\sigma^*$ in direction $w$ determines the active halfspace:
\[
  H^{\sigma^*} \;=\; 
  \bigl\{(\lambda_1,\lambda_2) : 
  k_1^{\sigma^*}\lambda_1 + k_2^{\sigma^*}\lambda_2 + s^{\sigma^*} \geq 0
  \bigr\},
\]
where $k_1^{\sigma^*}\lambda_1^* + k_2^{\sigma^*}\lambda_2^* +
s^{\sigma^*} = 0$ (the halfspace boundary passes through the current
point). Moving in direction $w$ means decreasing $c$ below $c^*$. The
current Pareto piece lies on the boundary of $H^{\sigma^*}$, the
scheme ends this piece when a new halfspace $H^{\sigma'}$ becomes
active.

\begin{definition}
\label{def:next_strategy}
Given a left-optimal strategy $\sigma^*$ at $c^*$, the
\emph{next strategy} $\sigma'$ is identified as follows.
For each candidate strategy $\hat\sigma \neq \sigma^*$ whose halfspace
$H^{\hat\sigma}$ is not yet active at $(\lambda_1^*, \lambda_2^*)$
(i.e., $k_1^{\hat\sigma}\lambda_1^* + k_2^{\hat\sigma}\lambda_2^*
+ s^{\hat\sigma} > 0$), solve the $2\times 2$ system
\begin{equation}
\label{eq:crossing}
  k_1^{\sigma^*}\lambda_1 + k_2^{\sigma^*}\lambda_2 + s^{\sigma^*} = 0,
  \qquad
  k_1^{\hat\sigma}\lambda_1 + k_2^{\hat\sigma}\lambda_2 + s^{\hat\sigma} = 0
\end{equation}
to obtain the candidate crossing point
$(\lambda_1^{\hat\sigma}, \lambda_2^{\hat\sigma})$,
and set $c^{\hat\sigma} = w_0\lambda_1^{\hat\sigma}
+ (1-w_0)\lambda_2^{\hat\sigma}$.
Skip $\hat\sigma$ if the system~\eqref{eq:crossing} is singular
(the two boundaries are parallel and never cross) or if
$c^{\hat\sigma} \geq c^*$ (the crossing is not in the decreasing-$c$
direction). The next strategy is
\[
  \sigma' = \arg\max_{\hat\sigma}\, c^{\hat\sigma},
\]
that is, the candidate whose crossing occurs first
(at the largest $c < c^*$) as $c$ decreases from $c^*$.
If no valid candidate exists, $(\lambda_1^*, \lambda_2^*)$ is the last
breakpoint and the algorithm terminates.
\end{definition}

\begin{proposition}
\label{prop:newton_step}
\textbf{(Newton Step).}
Let $\sigma^*$ be left-optimal in direction $w$ at $c^*$, and let
$\sigma'$ be the next strategy. The next breakpoint of $\calP$ in
direction $w$ is the unique solution $(\lambda_1^{**}, \lambda_2^{**})$
of the $2\times 2$ linear system:
\begin{equation}
\label{eq:newton_system}
  k_1^{\sigma^*}\lambda_1 + k_2^{\sigma^*}\lambda_2 + s^{\sigma^*} = 0,
  \qquad
  k_1^{\sigma'}\lambda_1 + k_2^{\sigma'}\lambda_2 + s^{\sigma'} = 0.
\end{equation}
For integer data, $(\lambda_1^{**}, \lambda_2^{**})$ has half-integer
coordinates by Theorem~\ref{thm:pareto_structure}(iii), and the new
value $c^{**} = w\lambda_1^{**} + (1-w)\lambda_2^{**}$ satisfies
$c^{**} < c^*$.
\end{proposition}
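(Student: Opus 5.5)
The plan has three parts: show that the next breakpoint is where the boundary line $L^{\sigma^*}=\{k_1^{\sigma^*}\lambda_1+k_2^{\sigma^*}\lambda_2+s^{\sigma^*}=0\}$ first meets a new active halfspace, derive the half-integer property from Lemma~\ref{lem:k1k2_bound}, and read off $c^{**}<c^*$ from the first-crossing criterion.

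\textbf{Locating the breakpoint.} By Theorem~\ref{thm:pareto_structure}(i),(ii), $\calR$ is a finite intersection of closed halfspaces $k_1(\gamma)\lambda_1+k_2(\gamma)\lambda_2+s(\gamma)\ge 0$. Hence $\calP$ is a polygonal chain, and each of its pieces lies on the boundary of one such halfspace. Left-optimality of $\sigma^*$ (Definition~\ref{def:left_optimal}) says that $\Phi=\Phi^{\sigma^*}$ just to the left of $c^*$. So the piece of $\calP$ leaving $(\lambda_1^*,\lambda_2^*)$ in the decreasing-$c$ direction lies on $L^{\sigma^*}$. I will parametrise this piece by $c$, using that $L^{\sigma^*}$ is not parallel to the level lines of $c$ whenever the piece is non-degenerate. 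The piece continues until some other halfspace $H^{\hat\sigma}$, inactive at the current point, becomes tight. Every such candidate gives the $2\times2$ system~\eqref{eq:crossing}. Non-parallel candidates have a unique crossing. Parallel candidates never cut the piece, and candidates with $c^{\hat\sigma}\ge c^*$ lie behind it. Since $\mathcal{S}$ is finite, the first obstruction met as $c$ decreases is the valid candidate with the largest $c^{\hat\sigma}<c^*$, which is $\sigma'$. So $(\lambda_1^{**},\lambda_2^{**})$ is the unique solution of~\eqref{eq:newton_system}, and uniqueness follows because the system for $\sigma'$ is nonsingular by construction.

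I still need to show that this crossing is a genuine breakpoint and that no halfspace becomes violated earlier. Between $c^*$ and $c^{\sigma'}$, every halfspace inactive at $(\lambda_1^*,\lambda_2^*)$ stays strictly satisfied, because each boundary line crosses $L^{\sigma^*}$ at most once and, by maximality of $c^{\sigma'}$, not in that interval. The halfspaces already active at $(\lambda_1^*,\lambda_2^*)$ other than $H^{\sigma^*}$ are also satisfied along the piece: left-optimality says $\Phi^{\sigma^*}$ attains $\Phi\ge0$ there, so their boundaries only touch $L^{\sigma^*}$ at the current point. Past $c^{\sigma'}$, $H^{\sigma'}$ is violated on $L^{\sigma^*}$, so the boundary turns there.

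\textbf{Half-integrality and decrease of $c$.} Both equations of~\eqref{eq:newton_system} come from cycles, so their coefficient pairs lie in the list given by Lemma~\ref{lem:k1k2_bound}. The point $(\lambda_1^{**},\lambda_2^{**})$ is a breakpoint of $\calP$, so Theorem~\ref{thm:pareto_structure}(iii) applies directly: either Cramer's rule with $|\Delta|\le2$ gives half-integers, or the system is in the decoupled case and $\lambda_k=-s/2$. Finally, $c^{**}=c^{\sigma'}$, and $c^{\sigma'}<c^*$ holds by the skip rule in Definition~\ref{def:next_strategy}. The weight $w_0$ in that definition must be read as $w$.

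\textbf{Main obstacle.} The hard step is justifying that the "strategy halfspace" $H^{\sigma}$ is a single halfspace. The function $\Phi^\sigma$ is a minimum of several cycle means, so I will first refine each $\sigma$ to a pair (strategy, critical cycle), and take the candidates in Definition~\ref{def:next_strategy} over these finitely many cycle halfspaces. The degenerate case where the crossing coincides with an existing active constraint must be handled by the strict inequality in the candidate selection.
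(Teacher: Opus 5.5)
Your proposal follows the same route as the paper: the next breakpoint is the intersection of the boundary lines of $H^{\sigma^*}$ and $H^{\sigma'}$, half-integrality comes from the denominator bound of Theorem~\ref{thm:pareto_structure}(iii), and $c^{**}<c^*$ comes from the skip rule in Definition~\ref{def:next_strategy}. Your version is more careful than the paper's three-line proof. You check that the first crossing really is where the piece on $L^{\sigma^*}$ ends, you make $k^{\sigma}$ well defined by passing to cycle halfspaces, and you keep the decoupled case $(2,0),(0,2)$ with $|\Delta|=4$, which the paper's proof overlooks when it asserts $\Delta\in\{-2,-1,1,2\}$.
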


\begin{proof}
The breakpoint where the two Pareto pieces defined by $H^{\sigma^*}$
and $H^{\sigma'}$ meet is the intersection of their boundaries. This
intersection solves~\eqref{eq:newton_system}, which is a $2\times 2$
linear system with integer coefficients and determinant
$k_1^{\sigma^*}k_2^{\sigma'} - k_1^{\sigma'}k_2^{\sigma^*}$. By the
denominator bound argument in the proof of
Theorem~\ref{thm:pareto_structure}(iii), this determinant lies in
$\{-2,-1,1,2\}$ for integer data, so the system has a unique solution
with half-integer coordinates. The inequality $c^{**} < c^*$ holds
because $\sigma'$ only becomes optimal strictly below $c^*$, by
Definition~\ref{def:next_strategy}.
\end{proof}

\subsection{The Full Newton Algorithm}

\begin{algorithm2e}[H]
\caption{Newton Scheme for the Pareto Front}
\label{alg:newton}
\DontPrintSemicolon
\KwIn{Data matrices as in Definition~\ref{def:biobjective} with
  integer entries; starting direction $w_0\in(0,1)$; a Pareto-optimal feasible starting point $x_{\mathrm{start}} \in \mathbb{R}^n$.}
\KwOut{Complete set of Pareto-optimal triples
  $\{(\lambda_1^{(k)},\lambda_2^{(k)},x^{(k)})\}$,
  where $(\lambda_1^{(k)},\lambda_2^{(k)})$ are the breakpoints of
  $\calP$ and $f_j(x^{(k)})=\lambda_j^{(k)}$ for $j=1,2$.}
\BlankLine
\tcp{Initialise at the starting Pareto point}
$(\lambda_1^{(0)}, \lambda_2^{(0)}, x^{(0)}) \leftarrow 
(f_1(x_{\mathrm{start}}),\, f_2(x_{\mathrm{start}}),\, x_{\mathrm{start}})$\;
$c^{(0)} \leftarrow w_0\, f_1(x_{\mathrm{start}}) + (1-w_0)\, f_2(x_{\mathrm{start}})$;
$k \leftarrow 0$\;
\BlankLine
\Repeat{no next strategy exists}{
  \BlankLine
  \tcp{Identify left-optimal Max strategy at current point}
  \For{all $\sigma\in\mathcal{S}$}{
    Evaluate $\Phi^\sigma(\lambda_1(c^{(k)}-\varepsilon),
      \lambda_2(c^{(k)}-\varepsilon))$ for small $\varepsilon>0$\;
  }
  $\sigma^{(k)} \leftarrow \arg\max_\sigma \Phi^\sigma$
  \Comment*[r]{left-optimal strategy}
  \BlankLine
  \tcp{Extract active cycle coefficients}
  Read off $(k_1^{(k)},k_2^{(k)},s^{(k)})$ from $\sigma^{(k)}$\;
  \BlankLine
  \tcp{Find next strategy by first-crossing criterion}
  \For{each $\hat\sigma \in \mathcal{S}$, $\hat\sigma \neq \sigma^{(k)}$,
       with $k_1^{\hat\sigma}\lambda_1^{(k)}+k_2^{\hat\sigma}\lambda_2^{(k)}
       +s^{\hat\sigma} > 0$}{
    Solve $2\times 2$ system~\eqref{eq:crossing} for
      $(\lambda_1^{\hat\sigma}, \lambda_2^{\hat\sigma})$\;
    \lIf{system is singular or
      $w_0\lambda_1^{\hat\sigma}+(1-w_0)\lambda_2^{\hat\sigma} \geq c^{(k)}$}{
      skip $\hat\sigma$}
    $c^{\hat\sigma} \leftarrow
      w_0\lambda_1^{\hat\sigma}+(1-w_0)\lambda_2^{\hat\sigma}$\;
  }
  $\sigma' \leftarrow \arg\max_{\hat\sigma}\, c^{\hat\sigma}$
  \Comment*[r]{first crossing as $c$ decreases}\;
  \BlankLine
  \If{no such $\sigma'$ exists (no halfspace to cross)}{
    \textbf{break}
    \Comment*[r]{current point is the last breakpoint}
  }
  \BlankLine
  \tcp{Newton step: solve $2\times 2$ system}
  Solve $\begin{cases}
    k_1^{(k)}\lambda_1+k_2^{(k)}\lambda_2+s^{(k)}=0\\
    k_1^{\sigma'}\lambda_1+k_2^{\sigma'}\lambda_2+s^{\sigma'}=0
  \end{cases}$
  to obtain $(\lambda_1^{(k+1)},\lambda_2^{(k+1)})$\;
  Solve $A\otimes z\leq B(\lambda_1^{(k+1)},\lambda_2^{(k+1)})\otimes z$
    and set $x^{(k+1)}\leftarrow(z_1,\ldots,z_n)^\top$\;
  $k \leftarrow k+1$\;
}
\BlankLine
\KwRet{$\{(\lambda_1^{(k)},\lambda_2^{(k)},x^{(k)})\}_{k=0,1,\ldots}$}
\end{algorithm2e}

\begin{proposition}
\label{prop:newton_termination}
Algorithm~\ref{alg:newton} terminates in at most $|\mathcal{S}|$
Newton steps, each requiring $O(|\mathcal{S}|)$ work to identify the
next strategy plus one $2\times 2$ linear solve. The total complexity
is $O(|\mathcal{S}|^2)$ arithmetic operations, independent of the
magnitude of the data.
\end{proposition}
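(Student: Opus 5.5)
The proof has three parts: bound the number of Newton steps by $|\mathcal{S}|$, bound the work per step, and multiply.

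\textbf{Step 1: no strategy is used twice.} We show that the left-optimal strategies $\sigma^{(0)},\sigma^{(1)},\ldots$ produced by Algorithm~\ref{alg:newton} are pairwise distinct. By Proposition~\ref{prop:newton_step}, the values $c^{(k)}$ strictly decrease. Consequently the breakpoints $(\lambda_1^{(k)},\lambda_2^{(k)})$ move monotonically along $\calP$, say with $\lambda_1$ increasing and $\lambda_2$ decreasing.

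Each $\sigma^{(k)}$ has an active halfspace $H^{\sigma^{(k)}}$. Its boundary line contains the linear piece of $\calP$ between the $k$-th and $(k+1)$-st breakpoints. By Theorem~\ref{thm:pareto_structure}(i)--(ii), $\calP$ is the boundary of a convex region. Hence the outward normals $(k_1^{(k)},k_2^{(k)})$ of consecutive pieces rotate monotonically as we traverse $\calP$.

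Now suppose $\sigma^{(k)}=\sigma^{(l)}$ with $k<l$. Then the same line supports $\calP$ on two pieces separated by at least one piece with a different normal. Since $\calR$ is convex, a supporting line meets $\calR$ in a single connected segment. So the points of $\calP$ between the two pieces must lie on that same line, contradicting the strict change of slope at the intermediate breakpoints.

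A technical point arises if several strategies share the same halfspace, so that two distinct $\sigma$ have identical $(k_1,k_2,s)$. The argument above then shows that halfspace \emph{boundaries} are never repeated. This still bounds the number of steps by the number of distinct boundaries, which is at most $|\mathcal{S}|$. Together with termination when no valid candidate exists (Definition~\ref{def:next_strategy}), we get at most $|\mathcal{S}|$ Newton steps.

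\textbf{Step 2: cost per step.} In the first-crossing loop, each candidate $\hat\sigma$ requires checking the sign of one affine expression and solving one $2\times 2$ system via Cramer's rule. By the determinant bound in the proof of Theorem~\ref{thm:pareto_structure}(iii), $\Delta\in\{\pm1,\pm2\}$, so each solve costs $O(1)$. Taking the argmax over candidates is therefore $O(|\mathcal{S}|)$. The final Newton solve~\eqref{eq:newton_system} is a single further $O(1)$ operation.

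Identifying the left-optimal strategy is an argmax of $\Phi^\sigma$ over $\mathcal{S}$. I would treat it as $O(|\mathcal{S}|)$ comparisons, assuming the coefficients $(k_1^\sigma,k_2^\sigma,s^\sigma)$ are precomputed. Each $\Phi^\sigma$ is then evaluated as a linear form near $c^{(k)}$. This matches the accounting convention in the statement, which counts arithmetic operations on stored cycle data.

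\textbf{Step 3: total.} Multiplying gives $|\mathcal{S}|\cdot O(|\mathcal{S}|)=O(|\mathcal{S}|^2)$. None of the quantities involved depends on $M$, so the bound is independent of the magnitude of the data.

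\textbf{Main obstacle.} The main obstacle is Step 1. The convexity argument must be made rigorous, and in particular it must rule out the first-crossing criterion selecting a halfspace that is not actually the next supporting piece of $\calP$. To handle this, I would first prove a small lemma: the first crossing along the boundary of $H^{\sigma^{(k)}}$ in the decreasing-$c$ direction is exactly the next point where $\calR$ leaves that boundary line. This follows because $\calR$ is the intersection of the halfspaces from Theorem~\ref{thm:pareto_structure}(i). With the lemma in hand, the monotone rotation of normals gives non-repetition.
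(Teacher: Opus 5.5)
Your argument is essentially the paper's: $c^{(k)}$ strictly decreases, and convexity of $\calR$ prevents a boundary line, hence a left-optimal strategy, from becoming active twice. This gives at most $|\mathcal{S}|$ steps of $O(|\mathcal{S}|)$ work each, and your connected-segment argument, first-crossing lemma and explicit precomputation assumption only spell out what the paper asserts in one line. Be aware, though, that both versions inherit from Definition~\ref{def:next_strategy} the premise that each Max strategy $\sigma$ carries a single triple $(k_1^\sigma,k_2^\sigma,s^\sigma)$, whereas by Proposition~\ref{prop:biobjective_game}(iii) $\Phi^\sigma$ is a minimum over all cycles of $G^\sigma$ (Min still moves there); in Example~2 every row of $B(\lambda_1,\lambda_2)$ has one finite entry, so $|\mathcal{S}|=1$, yet the boundaries $\lambda_1=1$ and $\lambda_1+\lambda_2=2$ both come from cycles of that one $G^\sigma$, so your count ``distinct boundaries $\leq|\mathcal{S}|$'' is taken over from the paper rather than proved.
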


\begin{proof}
Each Newton step locates a strictly new breakpoint of $\calP$ with a
strictly smaller value of $c$. Specifically, by
Definition~\ref{def:next_strategy} the crossing value $c^{\sigma'}$
satisfies $c^{\sigma'} < c^{(k)}$ at every step, so the sequence
$c^{(0)} > c^{(1)} > c^{(2)} > \cdots$ is strictly decreasing.

Because $\calP$ is a convex piecewise-linear curve traversed
monotonically in the direction of decreasing $c$, each linear piece of
$\calP$ is entered and exited exactly once. The left-optimal strategy
$\sigma^{(k)}$ is by definition the unique Max strategy achieving the
maximum of $\Phi^\sigma$ on the piece immediately to the left of
$c^{(k)}$, on every subsequent piece (at strictly smaller $c$), the
left-optimal strategy is determined by the
$\arg\max_\sigma \Phi^\sigma$ computation. Since the pieces are
traversed strictly left-to-right (decreasing $c$), a strategy that was
left-optimal at $c^{(k)}$ cannot be left-optimal again at any $c' <
c^{(k+1)}$: if it were, the boundary $H^{\sigma^{(k)}}$ would have to
become active a second time at a strictly smaller $c$, contradicting
the fact that the Pareto front is traversed monotonically and that each
halfspace boundary $H^\sigma$ is a straight line (which intersects any
direction line $w_0\lambda_1+(1-w_0)\lambda_2 = c$ at most once in the
feasible region). Therefore the strategies $\sigma^{(0)}, \sigma^{(1)},
\ldots$ are pairwise distinct, and the algorithm terminates after at
most $|\mathcal{S}|$ steps.

Each step evaluates $\Phi^\sigma$ for all $\sigma\in\mathcal{S}$
(left-optimal identification) and performs at most $|\mathcal{S}|$
two-by-two linear solves (first-crossing search), giving $O(|\mathcal{S}|)$
work per step. The total is $O(|\mathcal{S}|^2)$ arithmetic operations,
independent of the data magnitude $M$.
\end{proof}
\begin{remark}\label{rem:xstart}
A Pareto-optimal $x_{\mathrm{start}}$ is obtained by minimising either
objective individually over~\eqref{eq:constraint}: minimising $f_1$
gives the left endpoint of $\calP$, minimising $f_2$ gives the right
endpoint.  The strategy set satisfies $|\mathcal{S}|\le 2^n(2+m)$,
which is exponential in $n$; the $O(|\mathcal{S}|^2)$ bound of
Algorithm~\ref{alg:newton} is therefore not polynomial in $n$, though
it remains independent of $M$.  Bisection (Algorithm~\ref{alg:directional_bisection})
is simpler and preferable for single Pareto points; the Newton scheme
is preferable when the full Pareto front is required and $M$ is large.
\end{remark}

\subsection{Application of Algorithm~\ref{alg:newton} to Example~2}
\label{sec:newton_example2}

\begin{figure}[ht]
\centering
\begin{tikzpicture}[scale=2.2, >=Stealth]
  \draw[->](-0.15,0)--(2.6,0) node[right]{$\lambda_1$};
  \draw[->](0,-0.15)--(0,2.6) node[above]{$\lambda_2$};
  \foreach \x in {0.5,1,1.5,2}
    \draw(\x,0.03)--(\x,-0.03) node[below,font=\scriptsize]{$\x$};
  \foreach \y in {0.5,1,1.5,2}
    \draw(0.03,\y)--(-0.03,\y) node[left,font=\scriptsize]{$\y$};
  \fill[blue!8,opacity=0.8]
    (1,2.5)--(2.5,2.5)--(2.5,0)--(2,0)--(1,1)--cycle;
  \draw[red,ultra thick](1,1)--(1,2.4)
    node[above,font=\small,red]{Piece~1\ ($\lambda_1=1$)};
  \draw[red,ultra thick](1,1)--(2,0)
    node[midway,below right,font=\small,red]{Piece~2\ ($\lambda_1{+}\lambda_2{=}2$)};
  \filldraw[red!80!black](1,1) circle(1.8pt)
    node[below left,font=\small,red!80!black]{$B^*=(1,1)$};
  \filldraw[blue!70!black](1.5,0.5) circle(1.8pt);
  \node[blue!70!black,font=\small,above right] at (1.5,0.5)
    {$(\tfrac{3}{2},\tfrac{1}{2})$ --- start};
  \draw[->,very thick,blue!70!black,
        shorten >=4pt, shorten <=4pt]
    (1.5,0.5) -- (1,1)
    node[midway,above left,font=\small,blue!70!black]{Newton step};
  \draw[gray,dashed,thin](0,2)--(2,0);
  \draw[gray,dashed,thin](1,0)--(1,2.4);
  \node[font=\scriptsize,gray,align=center] at (1.75,0.35)
    {$\sigma^{(0)}$: boundary $\lambda_1{+}\lambda_2{=}2$};
  \node[font=\scriptsize,gray,align=center] at (0.42,0.55)
    {$\sigma'$: boundary $\lambda_1{=}1$};
  \node[blue!40!black,font=\small] at (2.1,2.0){$\mathcal{R}$};
  \node[font=\scriptsize,black,align=center] at (1.5,1.5)
    {Algorithm terminates:\\Piece~1 identified};
  \draw[->,thin,gray](1.5,1.35)--(1.05,1.1);
\end{tikzpicture}
\caption{Trace of Algorithm~\ref{alg:newton} (Newton scheme) on
Example~2. Starting from the initialisation point
$(\tfrac{3}{2}, \tfrac{1}{2}) \in \text{Piece~2}$, a single Newton
step (blue arrow) locates the unique breakpoint $B^* = (1,1)$ by
solving the $2\times 2$ system formed by the active boundary
$\lambda_1+\lambda_2=2$ (strategy $\sigma^{(0)}$, dashed grey) and
the next boundary $\lambda_1=1$ (strategy $\sigma'$, dashed grey).
The algorithm terminates at $B^*$, having identified both Piece~1
($\lambda_1=1$, $\lambda_2\geq 1$, red vertical segment) and Piece~2
($\lambda_1+\lambda_2=2$, $\lambda_2\leq 1$, red diagonal segment).
One Newton step suffices since $\mathcal{P}$ has exactly one
breakpoint, consistent with Proposition~\ref{prop:newton_termination}.}
\label{fig:newton_trace}
\end{figure}

We trace the complete Pareto front of Example~2 using
Algorithm~\ref{alg:newton} with $w_0=\tfrac{3}{4}$ and
$x_{\mathrm{start}}=(\tfrac{1}{2},\tfrac{1}{2})^\top$.

\medskip\noindent\textbf{Initialisation.}
$f_1(\tfrac{1}{2},\tfrac{1}{2})=\tfrac{3}{2}$,
$f_2(\tfrac{1}{2},\tfrac{1}{2})=\tfrac{1}{2}$, so
$(\lambda_1^{(0)},\lambda_2^{(0)})=(\tfrac{3}{2},\tfrac{1}{2})$
with $c^{(0)}=\tfrac{5}{4}$.  Since $\lambda_1^{(0)}+\lambda_2^{(0)}=2$
and $\lambda_2^{(0)}\le 1$, this lies on Piece~2, strictly away from
$B^*$.

\medskip\noindent\textbf{Left-optimal strategy.}
Evaluating $\Phi^\sigma$ just below $c^{(0)}$ identifies $\sigma^{(0)}$
with $(k_1^{(0)},k_2^{(0)},s^{(0)})=(1,1,-2)$, corresponding to
the boundary $\lambda_1+\lambda_2=2$ (Piece~2).
Check: $\tfrac{3}{2}+\tfrac{1}{2}-2=0$. 

\medskip\noindent\textbf{First-crossing criterion.}
The unique valid candidate $\hat\sigma$ has
$(k_1^{\sigma'},k_2^{\sigma'},s^{\sigma'})=(1,0,-1)$.
Solving $\lambda_1+\lambda_2=2$ and $\lambda_1=1$ gives crossing point
$(1,1)$ with $c^{\sigma'}=1<\tfrac{5}{4}=c^{(0)}$.
Screening: $\tfrac{3}{2}-1=\tfrac{1}{2}>0$, so $H^{\sigma'}$ is not
yet active.

\medskip\noindent\textbf{Newton step.}
Solve $\{\lambda_1+\lambda_2=2,\ \lambda_1=1\}$:
$(\lambda_1^{(1)},\lambda_2^{(1)})=(1,1)=B^*$, with $x^{(1)}=(1,0)^\top$.
Verification: $f_1(1,0)=1=\lambda_1^{(1)}$, $f_2(1,0)=1=\lambda_2^{(1)}$,
$c^{(1)}=1<\tfrac{5}{4}=c^{(0)}$. 

\medskip\noindent\textbf{Termination.}
At $B^*=(1,1)$, moving left of $c^{(1)}=1$ in direction $w_0=\tfrac{3}{4}$
requires $\lambda_1<1$, but the constraint $x_1\le 1$ makes this
infeasible. No further crossing strategy exists; Algorithm~\ref{alg:newton}
terminates.

\medskip
\noindent\textbf{Summary.}
Algorithm~\ref{alg:newton} returns the complete set of breakpoints
of $\calP$ together with their Pareto-optimal triples:

\begin{center}
\renewcommand{\arraystretch}{1.3}
\begin{tabular}{cccccc}
\toprule
$k$ & $\lambda_1^{(k)}$ & $\lambda_2^{(k)}$ & $x^{(k)}$
    & Piece identified & Role \\
\midrule
$0$ & $\tfrac{3}{2}$ & $\tfrac{1}{2}$ &
  $(\tfrac{1}{2},\ \tfrac{1}{2})^\top$ & Piece~2 & Initialisation \\
$1$ & $1$ & $1$ & $(1,\ 0)^\top$ & Breakpoint $B^*$ & Newton step \\
\bottomrule
\end{tabular}
\end{center}

\noindent The Newton scheme locates the sole breakpoint $B^* = (1,1)$
exactly in \textbf{one step}. Together with the initialisation point
on Piece~2 and the vertical ray Piece~1 ($\lambda_1=1$,
$\lambda_2\geq 1$) identified at termination, the complete two-piece
Pareto front is recovered. In contrast, bisection alone would require
$O(\log_2(c^{(+)}-c^{(-)}))$ iterations merely to approximate $B^*$,
confirming the advantage described in
Remark~\ref{rem:xstart}. 

\section{Numerical Experiments}
\label{sec:computation}

We implemented Algorithm~\ref{alg:directional_bisection} in Python
using vectorised \texttt{numpy} arithmetic and Karp's
algorithm~\cite{Karp1978} for minimum cycle mean computation.
For $n \leq 3$, Algorithm~\ref{alg:newton} was additionally
implemented using exact rational arithmetic (\texttt{fractions}
module) and strategy enumeration for exact Newton step counts.
Feasibility at each $(\lambda_1,\lambda_2)$ is checked by computing
the minimum cycle mean of the parametric shadow graph.

Random instances were generated with integer entries drawn uniformly
from $[-M, M]$ for $n\times n$ constraint matrices ($m = n$) and
$n$-dimensional objective data vectors. Infeasible instances were
discarded. Bisection was run with weight $w = \tfrac{1}{2}$.
The number of trials per setting is 30 for $n \leq 10$, 10 for
$n = 50$, 5 for $n \in \{100, 200\}$, and 3 for $n = 500$.

\subsection*{Experiment~1: effect of data magnitude ($n = 2$)}

Table~\ref{tab:exp1} reports mean and maximum iteration counts for
$M \in \{10,\, 10^2,\, 10^3,\, 10^4,\, 10^5,\, 10^6\}$ with
$n = 2$ fixed, covering six orders of magnitude.

\begin{table}[h]
\centering
\renewcommand{\arraystretch}{1.25}
\begin{tabular}{rrccccc}
\toprule
$M$ & $\lceil\log_2(2M)\rceil$ & Bisect mean & Bisect max
    & Newton mean & Newton max \\
\midrule
$10$      & $5$  & $6.00$  & $6$  & $0.30$ & $2$ \\
$10^2$    & $8$  & $9.00$  & $9$  & $0.03$ & $1$ \\
$10^3$    & $11$ & $12.00$ & $12$ & $0.00$ & $0$ \\
$10^4$    & $15$ & $16.00$ & $16$ & $0.00$ & $0$ \\
$10^5$    & $18$ & $19.00$ & $19$ & $0.00$ & $0$ \\
$10^6$    & $21$ & $22.00$ & $22$ & $0.00$ & $0$ \\
\bottomrule
\end{tabular}
\caption{Mean and maximum iteration counts over 30 random instances
per magnitude ($n=2$, $m=2$, $w=\tfrac{1}{2}$).
The bisection count equals $\lceil\log_2(2M)\rceil+1$ exactly in
every instance, confirming the $O(\log M)$ bound of
Proposition~\ref{prop:termination}.
The Newton step count is independent of $M$ and decreases to zero
as $M$ grows: larger data produces single-piece Pareto fronts (no
breakpoints), so Algorithm~\ref{alg:newton} terminates immediately
at the initialisation point, consistent with
Proposition~\ref{prop:newton_termination}.}
\label{tab:exp1}
\end{table}

\subsection*{Experiment~2: effect of dimension ($M = 10^3$)}

Table~\ref{tab:exp2} extends the dimension experiment to
$n \in \{1,2,3,5,10,50,100,200,500\}$ with $M = 10^3$ fixed,
adding a CPU time column to confirm the
$O(n^2(n+m)\log M) = O(n^3\log M)$ per-direction cost of
Proposition~\ref{prop:termination}.

\begin{table}[h]
\centering
\renewcommand{\arraystretch}{1.25}
\begin{tabular}{rcccc}
\toprule
$n$ & Bisect mean & Bisect max & Newton mean/max & Time/dir (s) \\
\midrule
$1$   & $12.00$ & $12$ & $0.00\ /\ 0$          & $0.0004$ \\
$2$   & $12.00$ & $12$ & $0.00\ /\ 0$          & $0.0005$ \\
$3$   & $12.00$ & $12$ & $0.00\ /\ 0$          & $0.0006$ \\
$5$   & $12.00$ & $12$ & \multicolumn{1}{c}{---} & $0.0010$ \\
$10$  & $12.00$ & $12$ & \multicolumn{1}{c}{---} & $0.0022$ \\
$50$  & $12.00$ & $12$ & \multicolumn{1}{c}{---} & $0.0425$ \\
$100$ & $12.00$ & $12$ & \multicolumn{1}{c}{---} & $0.2526$ \\
$200$ & $12.00$ & $12$ & \multicolumn{1}{c}{---} & $3.0292$ \\
$500$ & $12.00$ & $12$ & \multicolumn{1}{c}{---} & $47.2833$ \\
\bottomrule
\end{tabular}
\caption{Mean and maximum iteration counts and mean CPU time per
direction over random instances ($M=10^3$, $m=n$, $w=\tfrac{1}{2}$).
The bisection count is constant at $\lceil\log_2(2000)\rceil+1=12$
across all $n \in \{1,\ldots,500\}$, confirming that the iteration
count depends only on $M$ and is completely independent of dimension
(Proposition~\ref{prop:termination}).
The CPU time per direction scales approximately as $O(n^3)$: the
ratio $200\to 500$ gives $47.28/3.03\approx 15.6\approx(500/200)^3
= 15.6$, confirming the $O(n^3\log M)$ complexity bound.
Newton steps are reported only for $n\le 3$ (dashes indicate exact
strategy enumeration is not practical for $n\ge 5$).
A Newton count of zero means the Pareto front has a single linear
piece; Algorithm~\ref{alg:newton} then terminates immediately at the
initialisation point.}
\label{tab:exp2}
\end{table}

\noindent
The two experiments together confirm the key complementarity
established in Remark~\ref{rem:xstart}.
Bisection grows exactly as $\lceil\log_2(2M)\rceil+1$ and is
completely insensitive to $n$, holding constant at $12$ even
for $n = 500$, a scale at which the strategy space
$|\mathcal{S}|\le 2^{500}(2+500)$ makes the Newton scheme
entirely impractical.  The per-direction CPU time validates the
$O(n^3\log M)$ bound of Proposition~\ref{prop:termination}: timing
doubles from $n=100$ to $n=200$ by a factor of $12\approx 8$
(consistent with $O(n^3)$) and from $n=200$ to $n=500$ by a factor
of $15.6\approx(2.5)^3=15.6$.  The Newton step count is independent
of $M$: at $M=10^6$, bisection requires 22 iterations per direction
while the Newton scheme traces the complete Pareto front in at most
2 steps for the small instances where enumeration is practical.
Figures~\ref{fig:exp1} and~\ref{fig:exp2} display both experiments
graphically.

\begin{figure}[H]
\centering
\begin{tikzpicture}
\begin{axis}[
  width=0.72\textwidth, height=5.5cm,
  xlabel={$\log_{10}(M)$},
  ylabel={Mean iterations / steps},
  xmin=0.7, xmax=6.3,
  ymin=-0.5, ymax=25,
  xtick={1,2,3,4,5,6},
  xticklabels={$1$,$2$,$3$,$4$,$5$,$6$},
  ytick={0,5,10,15,20,25},
  legend pos=north west,
  legend style={font=\small},
  grid=major,
  grid style={dashed,gray!30},
  thick
]
\addplot[color=blue, mark=square*, mark size=2.5pt, line width=1.2pt]
  coordinates {(1,6.00)(2,9.00)(3,12.00)(4,16.00)(5,19.00)(6,22.00)};
\addlegendentry{Bisection (mean)}
\addplot[color=red, mark=*, mark size=2.5pt, line width=1.2pt, dashed]
  coordinates {(1,0.30)(2,0.03)(3,0.00)(4,0.00)(5,0.00)(6,0.00)};
\addlegendentry{Newton (mean)}
\addplot[color=blue!40, mark=none, line width=0.8pt, dotted,
  domain=0.7:6.3] {3.322*x + 2.68};
\addlegendentry{$y = \lceil\log_2(2M)\rceil+1$ (reference)}
\end{axis}
\end{tikzpicture}
\caption{Mean iteration counts vs.\ $\log_{10}(M)$ ($n=2$, 30
random instances per $M$, six orders of magnitude). Bisection
iterations (blue, solid) grow linearly with $\log_{10}(M)$,
tracking the reference line $y=\lceil\log_2(2M)\rceil+1$ (dotted)
exactly. Newton steps (red, dashed) are independent of $M$ and
fall to zero beyond $M=10^2$.}
\label{fig:exp1}
\end{figure}

\begin{figure}[H]
\centering
\begin{tikzpicture}
\begin{axis}[
  width=0.72\textwidth, height=5.5cm,
  xlabel={Dimension $n$ (log scale)},
  ylabel={Bisection mean iterations},
  xmode=log,
  xmin=0.7, xmax=700,
  ymin=10, ymax=14,
  xtick={1,2,3,5,10,50,100,200,500},
  xticklabels={$1$,$2$,$3$,$5$,$10$,$50$,$100$,$200$,$500$},
  ytick={10,11,12,13,14},
  legend pos=north east,
  legend style={font=\small},
  grid=major,
  grid style={dashed,gray!30},
  thick
]
\addplot[color=blue, mark=square*, mark size=2.5pt, line width=1.2pt]
  coordinates {(1,12.00)(2,12.00)(3,12.00)(5,12.00)(10,12.00)
               (50,12.00)(100,12.00)(200,12.00)(500,12.00)};
\addlegendentry{Bisection (mean)}
\addplot[color=black, mark=none, line width=0.8pt, dotted,
  domain=0.7:700] {12};
\addlegendentry{$\lceil\log_2(2M)\rceil+1=12$}
\end{axis}
\end{tikzpicture}
\caption{Bisection mean iteration counts vs.\ dimension $n$
(log scale, $M=10^3$, $w=\tfrac{1}{2}$). The count is constant
at $12=\lceil\log_2(2000)\rceil+1$ across all
$n\in\{1,2,3,5,10,50,100,200,500\}$, confirming that bisection
complexity depends only on $M$ and is completely independent of
dimension, even at $n=500$.}
\label{fig:exp2}
\end{figure}

\section{Conclusion}
\label{sec:conclusion}

We have developed the foundations of tropical bi-objective pseudolinear 
optimization with two-sided constraints, connecting it to parametric 
mean-payoff games with two parameters. Our main results are:

\begin{enumerate}
  \item The feasibility region $\calR$ in parameter space is convex, and 
  the Pareto front $\calP$ is a convex piecewise-linear curve with finitely 
  many breakpoints (Theorem~\ref{thm:pareto_structure}).
  
  \item For integer data, all breakpoints of $\calP$ have coordinates 
  that are integer multiples of $\tfrac{1}{2}$, extending the denominator 
  bound of~\cite{PSW2023} to the bi-objective setting.

  \item Optimality and infeasibility certificates are given in terms of 
  the cycle structure of the parametric mean-payoff game 
  (Propositions~\ref{prop:optimality} and~\ref{prop:infeasibility}).
  
  \item A directional bisection algorithm traces the full Pareto front 
  in a finite number of steps by reducing to scalar bisection problems, 
  and a Newton scheme traces the full Pareto front in at most 
  $|\mathcal{S}|$ steps by jumping directly between breakpoints via 
  $2\times 2$ linear solves (Algorithms~\ref{alg:directional_bisection} 
  and~\ref{alg:newton}), here $|S|\leq 2^n(2+m)$ is independent of $M$ but exponential in $n$ (see Remark 9.5).
\end{enumerate}


\begin{thebibliography}{99}

\bibitem{AGG2012}
M.~Akian, S.~Gaubert, and A.~Guterman,
\textit{Tropical polyhedra are equivalent to mean payoff games},
\emph{Int. J. Algebra Comput.}, \textbf{22}(01) (2012), 1250001.
https://doi.org/10.1142/S0218196711006674

\bibitem{AB2012}
A.~Aminu and P.~Butkovi\v{c},
\textit{Non-linear programs with max-linear constraints: a heuristic approach},
\emph{IMA J. Manag. Math.}, \textbf{23}(1) (2012), 41--66.
https://doi.org/10.1093/imaman/dpq020

\bibitem{BA2009}
P.~Butkovi\v{c} and A.~Aminu,
\textit{Introduction to max-linear programming},
\emph{IMA J. Manag. Math.}, \textbf{20}(3) (2009), 233--249.
https://doi.org/10.1093/imaman/dpn032

\bibitem{BCOQ1992}
F.~Baccelli, G.~Cohen, G.J.~Olsder, and J.-P.~Quadrat,
\textit{Synchronization and Linearity: An Algebra for Discrete Event Systems},
John Wiley \& Sons, Chichester, 1992.

\bibitem{B2010}
P.~Butkovi\v{c},
\textit{Max-Linear Systems: Theory and Algorithms},
Springer, London, 2010.

\bibitem{CG1979}
R.A.~Cuninghame-Green,
\textit{Minimax Algebra},
Lecture Notes in Economics and Mathematical Systems, Vol.~166, 
Springer-Verlag, Berlin, 1979.

\bibitem{DS2004}
M.~Develin and B.~Sturmfels,
\textit{Tropical convexity},
\emph{Doc. Math.}, \textbf{9} (2004), 1--27.
https://doi.org/10.4171/dm/178

\bibitem{EM1979}
A.~Ehrenfeucht and J.~Mycielski,
\textit{Positional strategies for mean payoff games},
\emph{Int. J. Game Theory}, \textbf{8}(2) (1979), 109--113.
https://doi.org/10.1007/BF01768705

\bibitem{GKS2012}
S.~Gaubert, R.D.~Katz, and S.~Sergeev,
\textit{Tropical linear-fractional programming and parametric mean payoff games},
\emph{J. Symb. Comput.}, \textbf{47}(12) (2012), 1447--1478.
https://doi.org/10.1016/j.jsc.2011.12.049

\bibitem{GG1998}
S.~Gaubert and J.~Gunawardena,
\textit{The duality theorem for min-max functions},
\emph{C.R.A.S. Paris, S\'{e}rie I}, \textbf{326} (1998), 43--48.
https://doi.org/10.1016/S0764-4442(98)80017-X

\bibitem{JL2016}
M.~Joswig and G.~Loho,
\textit{Weighted digraphs and tropical cones},
\emph{Linear Algebra Appl.}, \textbf{501} (2016), 304--343.
https://doi.org/10.1016/j.laa.2016.03.023

\bibitem{K1980}
E.~Kohlberg,
\textit{Invariant half-lines of nonexpansive piecewise-linear transformations},
\emph{Math. Oper. Res.}, \textbf{5}(3) (1980), 366--372.
https://doi.org/10.1287/moor.5.3.366

\bibitem{Karp1978}
R.M.~Karp,
\textit{A characterization of the minimum cycle mean in a digraph},
\emph{Discrete Math.}, \textbf{23}(3) (1978), 309--311.
https://doi.org/10.1016/0012-365X(78)90011-0

\bibitem{K2012}
N.~Krivulin,
\textit{A new algebraic solution to multidimensional minimax location problems with Chebyshev distance},
\emph{WSEAS Trans. Math.}, \textbf{11}(7) (2012), 605--614.

\bibitem{K2014a}
N.~Krivulin,
Complete solution of a constrained tropical optimization problem with application to location analysis,
in: \emph{Relational and Algebraic Methods in Computer Science}, LNCS 8428,
Springer, 2014, pp.~362--378.
https://doi.org/10.1007/978-3-319-06251-8\_22

\bibitem{K2014b}
N.~Krivulin,
A constrained tropical optimization problem: complete solution and application example,
in: \emph{Tropical and Idempotent Mathematics and Applications}, Vol.~616, AMS,
Providence, RI, 2014, pp.~163--177.

\bibitem{K2015a}
N.~Krivulin,
\textit{Extremal properties of tropical eigenvalues and solutions to tropical optimization problems},
\emph{Linear Algebra Appl.}, \textbf{468} (2015), 211--232.
https://doi.org/10.1016/j.laa.2014.06.044

\bibitem{K2015b}
N.~Krivulin,
\textit{A multidimensional tropical optimization problem with a non-linear objective function and linear constraints},
\emph{Optimization}, \textbf{64}(5) (2015), 1107--1129.
https://doi.org/10.1080/02331934.2013.840624

\bibitem{K2017a}
N.~Krivulin,
\textit{Using tropical optimization to solve constrained minimax single facility location problems with rectilinear distance},
\emph{Comput. Manage. Sci.}, \textbf{14}(4) (2017), 493--518.
https://doi.org/10.1007/s10287-016-0260-6

\bibitem{K2017b}
N.~Krivulin,
\textit{Direct solution to constrained tropical optimization problems with application to project scheduling},
\emph{Comput. Manage. Sci.}, \textbf{14}(1) (2017), 91--113.
https://doi.org/10.1007/s10287-016-0259-0

\bibitem{PSW2023}
J.~Parsons, S.~Sergeev, and H.~Wang,
\textit{Tropical pseudolinear and pseudoquadratic optimization as parametric mean-payoff games},
\emph{Optimization}, \textbf{72}(11) (2023), 2793--2822.
https://doi.org/10.1080/02331934.2022.2124617

\bibitem{Z1976}
K.~Zimmermann,
\emph{Extremaln\'i algebra},
\'Utvar v\v{e}deck\'ych informac\'i ekonomick\'eho \'ustavu \v{C}SAV,
Prague, 1976.

\bibitem{Z1981}
U.~Zimmermann,
\emph{Linear and Combinatorial Optimization in Ordered Algebraic Structures},
Annals of Discrete Mathematics, Vol.~10, North Holland, Amsterdam, 1981.

\end{thebibliography}
\end{document}